\def\eqref#1{equation~\ref{#1}}
\def\1{\bm{1}}
\def\vphi{{\bm{\phi}}}
\def\vxi{{\bm{\xi}}}
\def\vzero{{\bm{0}}}
\def\vmu{{\bm{\mu}}}
\def\vtheta{{\bm{\theta}}}
\def\vf{{\bm{f}}}
\def\vp{{\bm{p}}}
\def\vq{{\bm{q}}}
\def\vs{{\bm{s}}}
\def\vu{{\bm{u}}}
\def\vv{{\bm{v}}}
\def\vw{{\bm{w}}}
\def\vx{{\bm{x}}}
\def\vy{{\bm{y}}}
\def\vz{{\bm{z}}}
\def\mI{{\bm{I}}}
\def\mY{{\bm{Y}}}
\def\mZ{{\bm{Z}}}
\DeclareMathAlphabet{\mathsfit}{\encodingdefault}{\sfdefault}{m}{sl}
\SetMathAlphabet{\mathsfit}{bold}{\encodingdefault}{\sfdefault}{bx}{n}
\newcommand{\pdata}{p_{\rm{data}}}
\title{Convergence guarantee for consistency models}
\author{Junlong Lyu   \\
Noah's Ark Lab, HK\\
Huawei Tech.\\
\texttt{lyujunlong@huawei.com} 
\And
 Zhitang Chen  \\
Noah's Ark Lab, HK\\
Huawei Tech.\\
\texttt{chenzhitang2@huawei.com} 
\And
Shoubo Feng \\
Noah's Ark Lab \\
Huawei Tech. \\
\texttt{fengshoubo1@huawei.com} \\
}
\def\dd#1{\mathrm{d}#1}
\newtheorem{thm}{Theorem}
\newtheorem{cor}[thm]{Corollary}
\newtheorem{asmp}{Assumption}
\newtheorem{lem}[thm]{Lemma}
\newtheorem{rmk}{Remark}
\begin{document}

\maketitle

\begin{abstract}
We provide the first convergence guarantees for the Consistency Models (CMs),  a newly emerging type of one-step generative models that can generate comparable samples to those generated by Diffusion Models. Our main result is that, under the basic assumptions on score-matching errors, consistency errors and smoothness of the data distribution, CMs can efficiently sample from any realistic data distribution in one step with small $W_2$ error. Our results (1) hold for $L^2$-accurate score and consistency assumption (rather than $L^\infty$-accurate); (2) do note require strong assumptions on the data distribution such as log-Sobelev inequality; (3) scale polynomially in all parameters; and (4) match the state-of-the-art convergence guarantee for score-based generative models (SGMs). 
We also provide the result that the Multistep Consistency Sampling procedure can further reduce the error comparing to one step sampling, which support the original statement of \cite{Song2023ConsistencyM}. 
Our result further imply a TV error guarantee when take some Langevin-based modifications to the output distributions. 
\end{abstract}

\section{Introduction}

Score-based generative models (SGMs), also known as diffusion models (\cite{SohlDickstein2015DeepUL,Song2019GenerativeMB,Dhariwal2021DiffusionMB,Song2021MaximumLT, Song2020ScoreBasedGM}), are a family of generative models which achieve unprecedented success across multiple fields like image generation (\cite{Dhariwal2021DiffusionMB,Nichol2021GLIDETP,Ramesh2022HierarchicalTI,Saharia2022PhotorealisticTD}), audio synthesis (\cite{Kong2020DiffWaveAV,Chen2020WaveGradEG,Popov2021GradTTSAD}) and video generation (\cite{Ho2022ImagenVH,Ho2022VideoDM}). A key point of diffusion models is the iterative sampling process which gradually reduce noise from random initial vectorswhich provides a flexible trade-off of compute and sample quality, as using extra compute for more iterations usually yields samples of better quality. However, compared to single-step generative models like GANs, VAEs or normalizing flows, the generation process of diffusion models requires 10-2000 times more, causing limited real-time applications.

To overcome this limination, \cite{Song2023ConsistencyM} proposed  Consistency Models (CMs) that can directly map noise to data, which can be seen as an extention of SGMs. CMs support fast one-step generation by design, while still allowing multistep sampling  to trade compute for sampling quality. CMs can be trained either by distilling pre-trained diffusion models, or as stand alone generative models altogether. \cite{Song2023ConsistencyM} demonstrate its superiority through extensive experiments outperforming existing distillation techniques for diffusion models, and when trained in isolation, CMs outperform existing one-step, non-adversarial generative models on standard benchmarks.

Besides the achievements of CMs in saving the the generation costs as well as keeping the sampling quality, it is a pressing question of both practical and theoretical concern to understand the mathematical underpinnings which explain their startling successes. The theoretical guarantee for SGMs has been extensively studied and well established: super-polynomial convergence...  strong assumptions on the data distribution... strong assumptions on the score estimation error... Despite the theoretical successes of SGMs, one would wonder if CMs can inherit the good points from SGMs, as they are inextricably linked in their underlying mathematical process. 

Providing a convergence analysis for CMs and exploring the benefits for CMs comparing to SGMs is a pressing first step towards theoretically understanding why CMs actually work in practice. 

\subsection{Our contributions}
In this work, we take a step towards connecting theory and practice by providing a convergence guarantee for CMs, under minimal assumptions that coincide with our intrinsic. For the underlying SGMs, we make no more assumptions than the state-of-the-art works:

{\bf A1}  The score function of the forward process is $L_s$-Lipschitz. \\
{\bf A2}  The second moment of the data distribution $\pdata$ is bounded.

Note that these two assumptions are standard, no more than what is needed in prior works. The crucial point to these two asumptions is that they do not need log-concavity, a log-Sobelev inequality, or dissipativity, which cover arbitrarily non-log-concave data distributions. Our main result is summarized informally as follows.
\begin{thm}\label{thm1}
    Under Assumptions {\bf A1} and {\bf A2},  and in addition if the consistency model is Lipschitz and the consistency error, score estimation error in $L^2$ are at most $O(\varepsilon)$ with an appropriate choice of step size in training procedure, then, the CM outputs a measure which is $\varepsilon$-close in Wasserstein-2 ($W_2$) distance to $\pdata$ in single step.
\end{thm}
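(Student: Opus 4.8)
The plan is to control a single quantity, $W_2\big(\mathrm{law}(f_\theta(z,T)),\pdata\big)$ with $z\sim\mathcal N(0,T^2 I)$, by splitting it into four contributions: (i) an \emph{early-stopping} error $W_2(p_{\epsilon_0},\pdata)$ at the small time $\epsilon_0$ at which the consistency boundary condition $f_\theta(\cdot,\epsilon_0)=\mathrm{id}$ is imposed; (ii) an \emph{initialization} error from replacing the true noised marginal $p_T$ by the Gaussian prior; (iii) the accumulated \emph{consistency error}; and (iv) the \emph{score-estimation plus discretization} error of the underlying probability-flow (PF) ODE. Piece (i) is immediate from Assumption \textbf{A2}: coupling $x_0\sim\pdata$ with $x_{\epsilon_0}=x_0+\epsilon_0\xi$ gives $W_2(p_{\epsilon_0},\pdata)\le\epsilon_0\sqrt d$, which is negligible once $\epsilon_0$ is taken polynomially small.

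The core of the argument is a telescoping identity evaluated \emph{along the exact PF-ODE trajectory}, which is what lets us avoid a full ODE-sampler analysis here. Let $\epsilon_0=t_0<t_1<\dots<t_N=T$ be the training schedule, let $x^\star_{t_n}$ be the exact PF-ODE flow (true score) started from $x^\star_T\sim p_T$, so that $x^\star_{t_n}\sim p_{t_n}$ and $x^\star_{t_0}\sim p_{\epsilon_0}$, and let $\Phi_n$ denote the one-step numerical update (estimated score) from $t_n$ to $t_{n-1}$. Using the boundary condition,
\begin{equation}
 f_\theta(x^\star_T,T)-x^\star_{t_0}=\sum_{n=1}^{N}\Big(f_\theta(x^\star_{t_n},t_n)-f_\theta(x^\star_{t_{n-1}},t_{n-1})\Big),
\end{equation}
and inside each summand I insert $\Phi_n(x^\star_{t_n})$: the difference $f_\theta(x^\star_{t_n},t_n)-f_\theta(\Phi_n(x^\star_{t_n}),t_{n-1})$ is \emph{exactly} a consistency residual evaluated at $x^\star_{t_n}\sim p_{t_n}$, hence has $L^2$-norm at most the trained consistency error at level $n$; the remaining term $f_\theta(\Phi_n(x^\star_{t_n}),t_{n-1})-f_\theta(x^\star_{t_{n-1}},t_{n-1})$ is at most $\mathrm{Lip}(f_\theta)\,\|\Phi_n(x^\star_{t_n})-x^\star_{t_{n-1}}\|$. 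Taking $L^2$-norms and applying Minkowski's inequality turns this into a sum over $n$ of $(\text{consistency error})_n+\mathrm{Lip}(f_\theta)\cdot(\text{one-step PF-ODE error})_n$; since $x^\star_{t_0}\sim p_{\epsilon_0}$, this is a coupling and upper-bounds $W_2\big(\mathrm{law}(f_\theta(x^\star_T,T)),p_{\epsilon_0}\big)$.

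It remains to bound the per-step PF-ODE error $\|\Phi_n(x^\star_{t_n})-x^\star_{t_{n-1}}\|$ in $L^2$, and to handle piece (ii). For the former I would split as usual into a local truncation error and a score-mismatch error: the truncation error is controlled by bounds on the time-derivatives of the exact flow along $p_{t_n}$, which follow from \textbf{A1} (the drift $-t\,\nabla\log p_t$ is spatially Lipschitz with controlled time-dependence) and \textbf{A2} (the trajectory stays bounded in $L^2$ uniformly in time); the score-mismatch error is $\lesssim \Delta t_n\, t_n$ times the $L^2$ score error at time $t_n$, precisely because we evaluate at $x^\star_{t_n}\sim p_{t_n}$, the marginal with respect to which the score assumption is stated. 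Summing over $n$ and optimizing $N$ and the schedule against the per-step consistency errors yields an $O(\varepsilon)$ bound, polynomial in $d$, $L_s$, the second moment $m_2$ of $\pdata$, and $1/\varepsilon$. For piece (ii) I must pass from $x^\star_T\sim p_T$ to the prior $z\sim\mathcal N(0,T^2I)$: since $W_2(\mathcal N(0,T^2I),p_T)\le\sqrt{m_2}$ is not small on its own, the point is that $f_\theta(\cdot,T)$ contracts at scale $T$ — for large $t$ the PF-ODE drift is $\approx x/t$, so the exact map from $T$ down to a moderate level $\tau$ contracts by $\approx\tau/T$, and $f_\theta$ inherits this up to the already-controlled consistency and score errors — giving $W_2\big(\mathrm{law}(f_\theta(z,T)),\mathrm{law}(f_\theta(x^\star_T,T))\big)=O(\tau\sqrt{m_2}/T)$, driven to $O(\varepsilon)$ by taking $T$ polynomially large. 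Collecting (i)--(iv) and balancing $\epsilon_0$, $T$, $N$ and the schedule gives the claimed bound.

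\textbf{Main obstacle.} I expect the hard part to be making piece (ii) and the truncation part of piece (iv) rigorous \emph{at the same time}: deterministic samplers lack the exponential ergodicity that renders the Gaussian-initialization error harmless in SDE-based analyses, so one has to exploit the explicit near-linear structure of the PF-ODE at large noise scales both to show the map forgets its initialization and to keep the early-time local truncation errors summable — all without a log-Sobolev or dissipativity assumption. A secondary difficulty is the bookkeeping of the accumulated consistency error: because it enters as a Minkowski sum over the $N$ discretization levels, the schedule and step count chosen to suppress truncation error must not amplify the consistency contribution, which forces a careful joint choice of all parameters.
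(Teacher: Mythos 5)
Your telescoping core is essentially the paper's own argument: decomposing $f_\theta(x^\star_T,T)-x^\star_{t_0}$ along the exact PF-ODE trajectory over the training levels, inserting the one-step numerical update so that each summand splits into an on-distribution consistency residual plus $\mathrm{Lip}(f_\theta)$ times a one-step ODE error (itself split into a truncation part controlled via \textbf{A1}--\textbf{A2} and a score-mismatch part evaluated at $p_{t_n}$), then Minkowski and the coupling interpretation to bound $W_2$ against the early-stopped marginal --- this is exactly Theorem \ref{thm-mapping-error} combined with Theorem \ref{thm-W2-error}, and your early-stopping estimate matches the one used in Corollary \ref{thm-onestep-error}. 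Your remark that the schedule must keep the early-time truncation terms summable corresponds to the paper's geometric schedule (Assumption \ref{Timestep schedule}).

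The genuine gap is your piece (ii). In your variance-exploding parametrization, $W_2(\mathcal N(0,T^2 I),p_T)=O(m_2)$ does not decay in $T$, and you propose to remove it through a contraction of $f_\theta(\cdot,T)$ ``inherited'' from the near-linear exact flow at large noise. Nothing in the hypotheses delivers that: $f_\theta$ is only assumed Lipschitz with constant $L_f\ge 1$ (not contracting), and its closeness to $f^{\text{ex}}$ is only in $L^2(p_{t_n})$, i.e.\ on-distribution, whereas $z\sim\mathcal N(0,T^2I)$ is precisely an off-distribution input; an $L^2(p_T)$ error bound transfers no contraction to $f_\theta$ there, and the only estimate the assumptions give is the crude $L_f\,W_2(\mathcal N(0,T^2I),p_T)=O(L_f m_2)$, which is not $O(\varepsilon)$. (A TV-based coupling, $\mathrm{TV}(p_T,\mathcal N(0,T^2I))\lesssim m_2/T$, also fails to yield $W_2$ control without second-moment control of $f_\theta$ on the uncoupled event; Lipschitzness alone turns that contribution into something of order $L_f T\sqrt{d}\,\sqrt{m_2/T}$, which grows with $T$.) The paper avoids this issue entirely by taking the forward process to be the OU (variance-preserving) flow: since $e^{-T}x_0+\sqrt{1-e^{-2T}}\xi\sim p_T$, one has $W_2(\mathcal N(0,I_d),p_T)\lesssim(\sqrt d\vee m_2)e^{-T}$, so the plain Lipschitz bound $L_f W_2$ suffices once $T\gtrsim\log\bigl(L_f(\sqrt d\vee m_2)/\varepsilon\bigr)$, with no contraction property of $f_\theta$ required. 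Unless you add an assumption (off-distribution accuracy of $f_\theta$, or contractivity at large $t$), you should switch to the VP schedule to close this step; with that change the rest of your outline goes through as in the paper.
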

We find Theorem \ref{thm1} surprising, because it shows that CMs can output a distribution arbitrary close to the data distribution in $W_2$ distance with a single step. The error of CMs is just the same order as what SGMs achieved, under the assumption that the consistency error is small enough, which coincide with the incredible success of CMs in many benchmarks. In the fields of neural networks, our result implies that so long as the neural network succeeds at the score and consistency function estimation tasks, the remaining part of the CM algorithm is almost understood, as it admits a strong theoretical justification.

However, learning the score function and consistency function is also difficult in general. Nevertheless, our result still leads the way to further investigations, such as: do consistency function for real-life data have intrinsic structure which can be well-explored by neural networks? If the answer is true, this would then provide an end-to-end guarantee for CMs.

{\bf Better error bound by multistep consistency sampling.} Beyond the one step CM sampling, \cite{Song2023ConsistencyM} suggests a way for multistep sampling to trade compute for sampling quality. However in their original work, they did not make any analysis on the positive effect of multistep sampling comparing to one-step sampling. In our work, we analysis the error bound for each middle state of multistep sampling, showing an asymptotically convergent error bound that greatly smaller than the error bound for one-step sampling. Our analysis reveals the fact that with a suitable choice of time points, the multistep consistency sampling pay a few steps to achieve the near-best performance.

{\bf Bounding the Total Variational error.} As the mathematical foundation of CMs established on the reverse probability flow ODE, they share the same shortcoming comparing to the probability flow SDE: they can not get an error bound in Total Variational (TV) distance or Kullback-Leibler divergence by only controlling the score-matching objective, thus may even fail to estimate the likelihood of very simple data distributions (Fig.1 in \cite{Lu2022MaximumLT}). To solve this potential problem, we offered two  modification process to control the TV error: we can take an OU-type smoothing which takes no more evaluation costs to get a relatively larger TV error bound; we can also apply a Langevin dynamics for correcting purpose with the score model to get a smaller TV error bound, while it needs additional  $O(\varepsilon^{-1} d^{1/2})$ evaluation steps. 

{\bf Bounding the ${\bm W_{\bf 2}}$ error for general bounded data distribution.} The foregoing results are established on the assumption {\bf A1}, which only holds true for $L_s$-smooth data distribution. When only assume the data distribution to be bounded supported, which includes the situation when $q$ is supported on a lower-dimensional submanifold of $\mathbb{R}^d$, we can still guarantee polynomial convergence in the Wasserstein metric by early stopping. As the methodology is  the same as in \cite{Chen2022SamplingIA} and \cite{Lee2022ConvergenceOS}, we do not 
claim the originality, but just include this part for completeness.

\subsection{Prior works}

As far as we known, this is the first work to establish a systematical analysis to the convergence property of CMs. As the CMs and SGMs share the similar mathematical essence in asymptotic situation, our result can be compared to a vast list of literatures on convergence of SGMs. 

{\bf SDE-type SGMs.} The Langevin Monte Carlo (LMC) algorithm (\cite{Rossky1978BrownianDA}) can be seen as the predecessor to the SDE-type SGMs, and literature on convergence of LMCs is extensive, such as \cite{Durmus2015NonasymptoticCA, Cheng2017ConvergenceOL, Cheng2017UnderdampedLM}. However, these works mainly consider the case of exact or stochastic gradients. By the structure of score-matching loss function, only $L^2$-accurate gradient can be guaranteed for SDE-type SGMs. [\cite{Lee2022ConvergenceFS}] are the first to give polynomial convergence guarantee in TV distance under $L^2$-accurate score. However, they rely on the data distribution satisfying smoothness conditions and a log-Sobolev inequality, which essentially limits the guarantees to unimodal distributions.

\cite{Bortoli2022ConvergenceOD} instead only make minimal data assumptions, giving convergence in Wasserstein distance for distributions with bounded support $\mathcal{M}$. In particular, this covers the case of distributions supported on lower-dimensional manifolds, where guarantees in TV distance are unattainable. However, their guarantees have exponential dependence on the diameter of $\mathcal{M}$ or other parameters such as the Lipstchitz constant of score function.

Recently, \cite{Chen2022SamplingIA} and \cite{Lee2022ConvergenceOS} concurrently obtained theoretical guarantees for SGMs under similar general assumptions on the data distribution. They give Wasserstein bounds for any distribution of bounded support (or sufficiently decaying tails), and TV bounds for distributions under minimal smoothness assumptions, that are polynomial in all parameters. This gives theoretical grounding to the success of SGM of data distribution that are often non-smooth and multimodal.

{\bf ODE-type SGMs.} Instead of implementing the time reversed diffusion as an SDE, it is also possible to implement it as an ordinary differential equation (ODE). However, currerent analyses of SGMs cannot provide a polynomial-complexity TV bounds of the probability flow ODE under minimal assumption on data distribution. \cite{Lu2022MaximumLT} first bounded the KL divergence gap (and thus TV error) by higher order gradients of the score function, and thus suggest controlling this bound by minimizing the higher order score-matching objectives, which causes much more difficulties in training the score model.

Instead of changing the training procedure, \cite{Chen2023RestorationDegradationBL}  obtained a discretization analysis for the probability flow ODE in KL divergence, though their bounds have a large dependency on $d$, exponential in the Lipschitz constant of the score integrated over time, which rely on higher order regularities of the log-data density.

To overcome the difficulty on strong data density regularities assumptions, \cite{Chen2023ThePF} only assume the data density to be $L$-smoothness and suggest to interleave steps of the discretized probability flow ODEs with Langevin diffusion correctors using the estimated score, and get a better convergence guarantee than SDEs thanks to the $\mathcal{C}^1$ trajectory for ODEs comparing to the $\mathcal{C}^{\frac{1}{2}-}$ trajectory for SDEs.

\section{Preliminary} \label{pre}
\subsection{Diffusion Models}
Consistency models are heavily relied on the denoising diffusion probabilistic modeling (DDPM). We start with a forward process defined in $\mathbb{R}^d$, which is expressed as a stochastic differential equation 
\begin{equation}
    d{\vx}_t = {\vmu} ( {\vx}_t, t)\dd{t} + \sigma(t) d {\vw}_t
\end{equation}
where $t \in [0,T], T > 0$ is a fixed constant, $\vmu(\cdot,\cdot)$ and $\sigma(\cdot)$ are the drift and diffusion coefficients respectively, and $\{\vw_t\}_{t \in [0,T]}$ denotes the $ d$-dimensional standard Brownian motion. Denote the disbribution of $\vx_t$ as $p_t(\vx)$, therefore $p_0(\vx) = \pdata(\vx)$. A remarkable property is the existence of an ordinary differential equation dubbed the \textit{probability flow ODE}, whose solution trajectories sampled at $t$ are distributed according to $p_t(\vx)$:
\begin{equation} \label{pfode-general}
    \dd{\vx}_t = \left[ \vmu ( \vx_t, t) - \frac{1}{2} \sigma(t)^2 \nabla \log p_t(\vx_t) \right ] d t
\end{equation}
here $\nabla \log p_t(\vx)$ is the \textit{score function} of $p_t(\vx)$.

For clarity, we consider the simplest possible choice, which is the Ornstein-Uhlenbeck (OU) process as in \cite{Chen2023ThePF}, where $\mu(\vx,t) = -\vx$ and $\sigma(t) \equiv \sqrt{2}$,
\begin{equation} \label{pfsde-exact}
    \dd{\vx}_t = -\vx_t \dd t + \sqrt{2} \dd \vw_t, \vx_0 \sim \pdata,
\end{equation}
The corresponding backward ODE is
\begin{equation} \label{pfode-exact}
    \dd{\vx}_t = (-\vx_t - \nabla \log p_t(\vx) )\dd t.
\end{equation}

In this case we have 
\begin{equation}p_t(\vx) = e^{dt}\pdata(e^{t}\vx) \ast \mathcal{N}(\bm{0},(1-e^{-2t})  \mI_d), \label{OUscheduler} \end{equation}
where $\ast$ denotes the convolution operator. We take $\pi(\vx) = \mathcal{N}(\vzero, \mI_d)$, which is a tractable Gaussian distribution close to $p_T(\vx)$. For sampling, we first train a score model $\vs_{\vphi}\approx \nabla \log p_t(\bm x)$ via score matching (\cite{Hyvrinen2005EstimationON,Song2019GenerativeMB,Ho2020DenoisingDP}), then plug into \eqref{pfode-exact} to get the empirical estimation of the PF ODE, which takes the form of
\begin{equation} \label{pfode-empirical}
    \dd{\vx}_t= (-\vx_t -\vs_{\vphi}(\vx_t,t)) \dd{t}.
\end{equation}
We call  \eqref{pfode-empirical} the \textit{empirical PF ODE}. Denote the distribution of $\vx_t$ in \eqref{pfode-empirical} as $q_t(\vx)$. Empirical PF ODE gradually transforms $q_T(\vx) = \pi(\vx)$ into $q_0(\vx)$, which can be view as an approximation of $\pdata(\vx)$.

\subsection{Consistency Models}
For any ordinary differential equation defined on $\mathbb{R}^d$ with vector field $\vv:\mathbb{R}^d \times \mathbb{R}^+ \to \mathbb{R}^d$,
\begin{equation} \nonumber
\dd{\vx}_t = \vv(\vx_t,t)\dd{t}, 
\end{equation} we may define the associate backward mapping $\vf^\vv: \mathbb{R}^d \times \mathbb{R}^+ \to \mathbb{R}^d$ such that
\begin{equation}\label{consistency}
    \vf^\vv(\vx_t, t) = \vx_\delta.
\end{equation}
with an early-stopping time $\delta>0$.
Under mild condition on $\vv$, such a mapping $\vf^\vv$ exists for any $t\in \mathbb{R}^+$, and is smooth relied on $\vx$ and $t$. Note that  \eqref{consistency} is equivalent to the following condition
\begin{equation} \nonumber
    \vf^\vv(\vx_t,t) = \vf^\vv(\vx_s,\tau),~\forall~ 0 \le \tau,t \le T 
\end{equation}
which playing the essential role in constructing consistency loss.

Now let us take $\vv^{\text{ex}}(\vx,t) = -\vx - \nabla \log p_t(\vx)$ and $\vv^{\text{em}}(\vx,t) = -\vx - \vs_{\bm \vphi}(\vx_t,t)$, and denote the corresponding backward mapping function $\vf^{\text{ex}}$, for exact vector field $\vv^{\text{ex}}$,  or $\vf^{\text{em}}$, for empirical vector field $\vv^{\text{em}}$, respectively. As we only have access to the $\vv^{\text{em}}$, we aim to construct a parametric model $\vf_\vtheta$ to approximate $\vf^{\text{em}}$. The authors first implement the boundary condition using the skip connection,
\begin{equation} \nonumber
    \vf_\vtheta(\vx,t)= c_{\text{skip}}(t)\vx + c_{\text{out}}(t)F_\vtheta(\vx,t)
\end{equation}
with differentiable $c_{\text{skip}}(t),c_{\text{out}}(t)$ such that $c_{\text{skip}}(\delta) = 1, c_{\text{out}}(\delta) = 0$, and then define the following Consistency Distillation object:
\begin{equation}\label{distillation_loss}
    \mathcal{L}_{\text{CD}}^N(\vtheta,\vtheta^-;\vphi):= \mathbb{E}[\lambda(t_n) \Vert \vf_\vtheta(\vx_{t_{n+1}}, t_{n+1})- \vf_{\vtheta^-}(\hat\vx_{t_n}^\vphi,t_n)\Vert_2^2],
\end{equation}
where $0  < t_1 = \delta <t_2 \cdots < t_N = T$, $n$ uniformly distributed over $\{1,2,\cdots,N-1\}$, $\vx \sim \pdata$, and $\vx_{t_{n+1}} \sim \mathcal{N}(\vx; t_{n+1}^2 \mI_d)$. Here $\hat\vx_{t_n}^\vphi$ is calculated by 
\begin{equation} \label{ODEsolver}
    \hat\vx_{t_n}^\vphi:= \Phi(\vx_{t_{n+1}}, t_{n+1}, t_n;\vphi)
\end{equation}
where $\Phi(\cdots;\vphi)$ represents the update function of a ODE solver applied to the empirical PF ODE \ref{pfode-empirical}. We may use the exponential integrator (i.e., exactly integrating the linear part),
\begin{equation}\label{exponential integrator}
    \hat \vx^{\vphi}_{t_n} = e^{t_{n+1} - t_n}\vx_{t_{n+1}} + (e^{t_{n+1} - t_n} - 1) \vs_\vphi(\vx_{t_{n+1}}, t_{n+1}).
\end{equation}

For simplicity, we assume $\lambda(t_n) \equiv 1$, and only consider the square of $l_2$ distance to build the loss, \cite{Song2023ConsistencyM} also considered other distance metric such as $l_1$ distance $\Vert \vx - \vy \Vert_1$, and the Learned Perceptual Image Patch Similarity (LPIPS, \cite{Zhang2018TheUE}).

To stabilize the training process, \cite{Song2023ConsistencyM} introduce an additional parameter $\vtheta^-$ and update it by an exponential moving average (EMA) strategy. That is, given a decay rate $0\le \mu < 1$, the author perform the following update after each optimizaiton step:
$\vtheta^- = \text{stopgrad}(\mu\vtheta^- + (1-\mu) \vtheta)$.

Besides the distillation strategy that needs an existing score model $\vs_{\vphi}$, \cite{Song2023ConsistencyM} also introduced a way to train without any pre-trained score models called the Consistency Training (CT) objective. As the CT objective is not closely related to our topic, we place the expression of CT objective under the OU scheduler \ref{OUscheduler} and the exponential integrator \ref{ODEsolver} in Lemma \ref{CTform}.

\cite{Song2023ConsistencyM} gave a asymptotic analysis on the approximation error in their original work. If $\mathcal{L}_{\text{CD}}^N(\vtheta,\vtheta;\vphi) = 0$, we have $\sup_{n,\vx} \Vert \vf_\vtheta(\vx,t_n) - \vf^{\text{em}}(\vx,t_n) \Vert_2 = O((\Delta t)^p)$ when the numerical integrator has local error uniformly bounded by $O((\Delta t)^{p+1})$ with $p \ge 1$.  However, when $\mathcal{L}_{
\text{CD}}^N \neq 0$, we also need a 
quantitative analysis on how far between $\vf_\vtheta$ and  $\vf^{\text{em}}$, and further to analysis the distance between generated distribution and true data distribution.

\section{Main Results}
In this section, we formally state our assumptions and our main results. We denote $\vf_{\vtheta,t} (\vx) = \vf_\vtheta(\vx,t)$ to emphasize the transformation of $\vf_\vtheta$ over $\vx$ at time $t$. We summarized some notations in the Appendix \ref{commom symbols}.
\subsection{Assumptions}
We assume the following mild conditions on the data distribution $\pdata$.
\begin{asmp}\label{normalized data}
    The data distribution has finite second moment, that is, $\mathbb{E}_{\vx_0 \sim \pdata}[\Vert\vx_0\Vert_2^2] = m_2^2 < \infty$.
\end{asmp}

\begin{asmp}\label{Lips cond score}
    The score function $\nabla \log p_t(\vx)$ is Lipschitz on the variable $\vx$ with Lipschitz constant $L_s \ge 1$, $\forall t \in [0,T]$. 
\end{asmp}

This two assumptions are standard and has been used in prior works \cite{Block2020GenerativeMW, Lee2022ConvergenceFS, Lee2022ConvergenceOS,Chen2022SamplingIA}. As \cite{Lee2022ConvergenceOS,Chen2022SamplingIA}, we do not assume Lipschitzness of the score estimate; unlike  \cite{Block2020GenerativeMW, Bortoli2021DiffusionSB}, we do not assume any convexity or dissipativity assumptions on the potential $U = -\log(\pdata)$, and unlike \cite{Lee2022ConvergenceFS} we do not assume $\pdata$ satisfies a log-Sobolev inequality. Thus our assumptions are general enough to cover the highly non-log-concave data distributions. Our assumption could be further weaken to only be compactly supported, as we stated in Section \ref{General data assumption}.

We also assume bounds on the score estimation error and consistency error.
\begin{asmp}\label{score err}
    Assume $\mathbb{E}_{\vx_{t_n} \sim p_{t_n}} [\Vert \vs_{\vphi}(\vx_{t_n},t_n) - \nabla \log p_{t_n}(\vx_{t_n}) \Vert_2^2] \le \varepsilon_{\text{sc}}^2, \forall n \in [\![1, N]\!]$.
\end{asmp}
\begin{asmp}\label{consistency err}
Assume $\mathbb{E}_{\vx_{t_n} \sim p_{t_n}}[\Vert \vf_\vtheta(\vx_{t_{n+1}}, t_{n+1})- \vf_{\vtheta}(\hat\vx_{t_n}^\vphi, t_n)\Vert_2^2]\le \varepsilon_{\text{cm}}^2(t_{n+1} - t_n)^2, \forall n \in [\![1,N-1]\!]$, where $\hat \vx^{\vphi}_{t_n}$ is the exponential integrator defined as in \eqref{exponential integrator}.
\end{asmp}
The score estimation error is the same as in \cite{Lee2022ConvergenceFS,Chen2022SamplingIA}. As discussed in Section \ref{pre}, these two assumption are nature and realistic in light of the derivation of the score matching objective and consistency distillation object.

The following Lipschitz condition for the consistency model is nature and has been used in prior work ( \cite{Song2023ConsistencyM}, Theorem 1).
\begin{asmp}\label{Lips cond consis}
    The consistency model $\vf_{\vtheta}(\vx,t_n)$ is Lipschitz on the variable $\vx$ with Lipschitz constant $L_f > 1,~\forall n \in [\![1,N]\!].$
\end{asmp}

For technique reason, we divide our discretization schedule into two stages: in the first stage, which lasts from $T$ to $h$, we keep the step size equal to $h$; in the second stage, which lasts from $h$ to $\delta$, we take a geometric reducing sequence $2^{-1}h, 2^{-2}h, \cdots$ until $2^{-l}h \le \delta$ for some $l \ge 1$.
\begin{asmp}\label{Timestep schedule}
    Assume the discretization schedule $0 < \delta = t_1 <t_2 < \cdots <t_N = T$, $h_k = t_{k+1} - t_{k}$ to \eqref{pfode-empirical} is divided into two stages:
    \begin{enumerate}
        \item $h_k \equiv  h $ for all $k \in [\![ N_1, N-1]\!],$ and $(N-N_1-1)h < T \le (N-N_1)h$;
        \item $h_k = 2^{-(N_1-k)} h = \frac{h_{k+1}}{2}$ for $k \in [\![1,N_1-1]\!]$, $N_1$ satisfies $h_2 = 2^{-(N_1-2)} h \le 2\delta. $
    \end{enumerate}
    note that in this case $h_1 = T - \sum_{k=2}^{N}h_k - \delta \le h - (1-2^{-(N_1-2)} h)h - \delta  \le \delta$, and $t_{N_1} \le h$
\end{asmp}
\begin{figure}[h]
    \centering
    \begin{tikzpicture} 
    \draw[->] (-1,0) -- (11,0);
    \draw[->] (0,0) --(0,-0.2) node[below = 3.6pt,scale = 0.7]{0};
    \draw[->] (0.125,0) --(0.125,0.2) node[above = 3.6pt,scale = 0.7]{$t_{1}$};
    \draw[->] (0.25,0) --(0.25,-0.2) node[below = 3.6pt,scale = 0.7]{$t_{2}$};
    \draw[->] (0.5,0) --(0.5,0.2) node[above = 3.6pt,scale = 0.7]{$t_{3}$};
    \draw[->] (1,0) --(1,-0.2) node[below = 3.6pt,scale = 0.7]{$t_{4}$};
    \draw (1.5,0) -- (1.5,0) node[below = 10pt,scale = 0.7]{$\cdots$};
    \draw[->] (2,0) --(2,-0.2) node[below = 3.6pt,scale = 0.7]{$t_{N_1}$};
    \draw[->] (4,0) --(4,-0.2) node[below = 3.6pt,scale = 0.7]{$t_{N_1+1}$};
    \draw[->] (6,0) --(6,-0.2) node[below = 3.6pt,scale = 0.7]{$t_{N_1+2}$};
    \draw (8,0) --(8,0) node[below = 10pt,scale = 0.7]{$\cdots$};
    \draw[->] (10,0) --(10,-0.2) node[below = 3.6pt,scale = 0.7]{$t_{N} = T$};
    \end{tikzpicture}
    \caption{Illustration of the discretization schedule in Assumption \ref{Timestep schedule}}
    \label{Timestep schedule fig}
\end{figure}
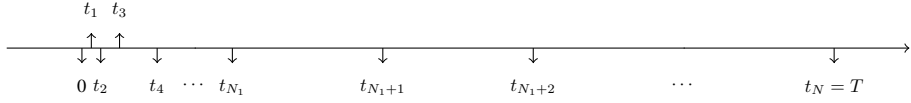

\subsection{$W_2$ error guarantee for one-step Consistency Generating}
In this section we introduce our main results. The first result bounds the Consistency Model estimation error in an expectation mean. 
\begin{thm}[see Section \ref{CMErrorproof} of Appendix] \label{thm-mapping-error}
Under Assumptions \ref{normalized data}-\ref{Lips cond consis}, assume we choose $\Phi$ as the exponential integrator, and assume the timestep schedule satisfies assumption \ref{Timestep schedule},
for $1 \le n \le N-1$:
    \begin{equation} \nonumber
        \left(\mathbb{E}_{\vx_{t_n} \sim p_{t_n} } [\Vert \vf_\vtheta(\vx_{t_n},t_n) - \vf^{\text{ex}}(\vx_{t_n},t_n)\Vert_2^2] \right)^{1/2}\lesssim t_n (\varepsilon_{\text{cm}} + L_f \varepsilon_{\text{sc}} + L_f L_s^{\frac{3}{2}} d^{\frac{1}{2}} h) + t_n^{\frac{1}{2}} L_f L_s d^{\frac{1}{2}} h.
    \end{equation}
\end{thm}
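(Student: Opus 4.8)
\textbf{Proof proposal for Theorem \ref{thm-mapping-error}.}

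The plan is to decompose the target quantity $\vf_\vtheta(\vx_{t_n},t_n) - \vf^{\text{ex}}(\vx_{t_n},t_n)$ by inserting the empirical backward mapping $\vf^{\text{em}}(\vx_{t_n},t_n)$, so that
\[
\vf_\vtheta(\vx_{t_n},t_n) - \vf^{\text{ex}}(\vx_{t_n},t_n) = \bigl(\vf_\vtheta(\vx_{t_n},t_n) - \vf^{\text{em}}(\vx_{t_n},t_n)\bigr) + \bigl(\vf^{\text{em}}(\vx_{t_n},t_n) - \vf^{\text{ex}}(\vx_{t_n},t_n)\bigr),
\]
and to bound the two pieces separately in $L^2(p_{t_n})$ before recombining by the triangle inequality in $L^2$. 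The first piece is the ``learning'' error of the consistency model against the empirical dynamics; the second piece is the ``drift-perturbation'' error between the empirical PF ODE \eqref{pfode-empirical} and the exact PF ODE \eqref{pfode-exact}, which is governed only by the score error $\varepsilon_{\text{sc}}$ and the Lipschitz constants.

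For the first piece, I would telescope along the discretization grid. Writing $\vf_\vtheta(\vx_{t_n},t_n) - \vf^{\text{em}}(\vx_{t_n},t_n)$ as a sum over $k$ from $n$ to $N-1$ of the one-step discrepancies $\vf_\vtheta(\vx_{t_k},t_k) - \vf_\vtheta(\hat\vx^\vphi_{t_{k+1}\to t_k}, t_k)$ after transporting the later point back by the true empirical flow — the standard consistency telescoping where the boundary term vanishes — each summand splits into (i) a genuine consistency-loss term controlled by Assumption \ref{consistency err}, giving $\varepsilon_{\text{cm}}(t_{k+1}-t_k)$, and (ii) a local-truncation term of the ODE solver transported through $\vf_\vtheta$, which by the $L_f$-Lipschitzness of $\vf_\vtheta$ (Assumption \ref{Lips cond consis}) costs $L_f$ times the local error of the exponential integrator. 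For the OU/exponential-integrator scheme this local error is the usual $O(L_s d^{1/2}(t_{k+1}-t_k)^2)$-type bound coming from the second-moment growth of $\vx_t$ and the Lipschitzness of the score; one needs the pointwise-in-expectation moment estimates $\mathbb{E}\|\vx_{t_k}\|^2 \lesssim d + m_2^2$ from \eqref{OUscheduler} together with Assumption \ref{Lips cond score}. Summing the $\varepsilon_{\text{cm}}$ contributions over the grid gives $\varepsilon_{\text{cm}}\sum_k(t_{k+1}-t_k)$; here is where the two-stage schedule of Assumption \ref{Timestep schedule} matters: in the constant-step stage the sum of step sizes is $\lesssim t_n$, and in the geometric stage it telescopes to something $\lesssim t_{N_1} \lesssim h$, so overall the $\varepsilon_{\text{cm}}$ term is $\lesssim t_n\varepsilon_{\text{cm}}$. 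Summing the local-truncation contributions, the constant-step portion contributes $\sum_k L_f L_s d^{1/2} h^2 \lesssim L_f L_s d^{1/2} h \cdot t_n$, matching the $t_n L_f L_s^{3/2} d^{1/2} h$ term (the extra $L_s^{1/2}$ will come from a Grönwall/stability factor when transporting errors through the empirical flow whose drift has Lipschitz constant $\sim L_s$), while the geometric-step portion near $\delta$ contributes the genuinely different-looking $\sum_{k<N_1} L_f L_s d^{1/2} 2^{-(N_1-k)}h \cdot (\text{step})$ which, because $\sum_k (\text{step}_k)^2$ for a geometric sequence is dominated by its largest term $\sim h^2$ rather than by $t_n h$, produces the anomalous $t_n^{1/2} L_f L_s d^{1/2} h$ term after one also accounts for the $t_n^{1/2}$ from bounding a sum of squared step sizes by (largest step)$^{1/2}\times(t_n)^{1/2}$ — more precisely I expect this term to arise from the late-time constant-step region where the stability factor $e^{t_n}$-type growth is truncated but the accumulated local errors, measured in $L^2$, only shrink like the square root of the remaining time rather than linearly.

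For the second piece, $\vf^{\text{em}} - \vf^{\text{ex}}$, I would compare the two ODE flows started from the same point $\vx_{t_n}$ and integrated backward to $\delta$. Their drifts differ by $\vs_\vphi - \nabla\log p_t$, whose $L^2(p_t)$ norm is $\le \varepsilon_{\text{sc}}$ by Assumption \ref{score err}, and the exact flow's drift is $(1+L_s)$-Lipschitz by Assumption \ref{Lips cond score}; a Grönwall argument along the backward trajectory then gives $\|\vf^{\text{em}}(\vx_{t_n},t_n) - \vf^{\text{ex}}(\vx_{t_n},t_n)\| \lesssim e^{O(L_s) t_n}\int_\delta^{t_n}\|(\vs_\vphi-\nabla\log p_s)(\cdot)\|\,ds$, and taking $L^2$ expectations (using that the law of the trajectory at time $s$ is comparable to $p_s$ up to the already-controlled discrepancy, or more cleanly using the instantaneous-change-of-measure/Girsanov-free ODE perturbation bound) yields a contribution $\lesssim t_n L_f \varepsilon_{\text{sc}}$ after also passing the perturbation through $\vf_\vtheta$'s Lipschitz constant where needed — note the statement's $\varepsilon_{\text{sc}}$ term carries an $L_f$, suggesting the cleanest route actually routes this comparison through the consistency-model Lipschitzness as well, i.e.\ one compares $\vf_\vtheta$ evaluated along the empirical versus exact flow. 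The main obstacle, and the step I would spend the most care on, is the geometric second stage: one must verify that the anomalous $t_n^{1/2} L_f L_s d^{1/2} h$ term is genuinely the right order — that the sum of squared geometric step sizes is not further improvable and that the early-stopping at $\delta$ together with the $2\delta$-type conditions in Assumption \ref{Timestep schedule} keep all the $\delta^{-1}$-type blowups out of the final bound. Secondarily, keeping the $L^2(p_{t_n})$ norms honest throughout — i.e.\ controlling expectations under the \emph{true} marginal $p_{t_n}$ while the errors naturally live along the empirical/approximate trajectories — requires either a change-of-measure estimate or an a-priori bound showing the two stay close, and I would set that up carefully at the start so that every later step can simply work under $p_{t_n}$.
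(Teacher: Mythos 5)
Your overall strategy differs from the paper's in a way that creates a genuine gap. The paper never introduces $\vf^{\text{em}}$ at all: it uses the identity $\vf^{\text{ex}}(\vx_{t_n},t_n)=\vx_\delta=\vf_\vtheta(\vx_{t_1},t_1)$ (from the boundary condition $c_{\text{skip}}(\delta)=1$, $c_{\text{out}}(\delta)=0$) and telescopes $\vf_\vtheta$ along the \emph{exact} trajectory points $\vx_{t_1},\dots,\vx_{t_n}$, splitting each increment into the consistency-loss term $\vf_\vtheta(\vx_{t_{k+1}},t_{k+1})-\vf_\vtheta(\hat\vx^\vphi_{t_k},t_k)$ (Assumption \ref{consistency err} applies directly since $\vx_{t_{k+1}}\sim p_{t_{k+1}}$) and $L_f\Vert\hat\vx^\vphi_{t_k}-\vx_{t_k}\Vert$, a \emph{one-step} local error where the Gr\"onwall factor is $e^{(2+1/h_k)h_k}=O(1)$ and the expectation is under the true marginal. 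Your insertion of $\vf^{\text{em}}$ forces you to compare the empirical and exact flows over the whole macroscopic interval $[\delta,t_n]$, and your own sketch produces the factor $e^{O(L_s)t_n}$, which is exponential in $L_sT$ and incompatible with the claimed polynomial bound; you do not explain how to remove it. Compounding this, Assumption \ref{score err} controls $\vs_\vphi-\nabla\log p_t$ only in $L^2(p_t)$, while your Gr\"onwall integrand lives on the \emph{empirical} trajectory whose law is $q_t\neq p_t$; the change-of-measure you defer to "set up carefully at the start" is precisely the hard step that makes ODE-type analyses difficult, and the paper's decomposition is designed so that it never arises.

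Two secondary points. First, your telescoping index runs "from $n$ to $N-1$," i.e.\ toward $T$; the boundary term vanishes at $t_1=\delta$, not at $t_N=T$, so the sum must run over $k=1,\dots,n-1$. Second, your account of the $t_n^{1/2}L_fL_sd^{1/2}h$ term is not where it actually comes from: in the paper it arises from the score-perturbation bound $\mathbb{E}\Vert\partial_t\nabla\log p_t(\vx_t)\Vert_2^2\lesssim L_s^2d(L_s+1/t)$ (Lemma \ref{scorepurturb}), whose $1/t$ singularity yields $\sum_k h_k^2/t_k^{1/2}\lesssim h\,t_n^{1/2}$ precisely because the geometric stage of Assumption \ref{Timestep schedule} keeps $h_k\lesssim t_k$; the $L_s^{3/2}$ in the other term likewise comes from the $L_s^3d$ part of that lemma, not from a stability factor. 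Your telescoping treatment of the first piece is in the right spirit, but as written the argument does not close.
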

Now we can get our first theorem that analysis the $W_2$ distance after CM mapping. 
\begin{thm}[see Section \ref{Onestep W2 error} of Appendix] \label{thm-W2-error}
Under Assumptions \ref{normalized data}-\ref{Timestep schedule}, let $\mu(\vx)$ be any probability density, $p_t(\vx) = e^{dt}\pdata(e^{t}\vx) \ast \mathcal{N}(\bm{0},(1-e^{-2t})  \mI_d),$ then the following estimation holds,
\begin{equation}\label{eq-W2-error}
    W_2( \vf_{\vtheta,t_n}\sharp \mu, p_{\delta} ) \lesssim L_f W_2(\mu,p_{t_n})+ t_n (\varepsilon_{\text{cm}} + L_f \varepsilon_{\text{sc}} + L_f L_s^{\frac{3}{2}} d^{\frac{1}{2}} h) + t_n^{\frac{1}{2}} L_f L_s d^{\frac{1}{2}} h.
\end{equation}
\end{thm}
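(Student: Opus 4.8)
\textbf{Proof proposal for Theorem~\ref{thm-W2-error}.} The plan is to factor the Wasserstein distance $W_2(\vf_{\vtheta,t_n}\sharp\mu, p_\delta)$ through the intermediate measure $\vf_{\vtheta,t_n}\sharp p_{t_n}$ by the triangle inequality:
\begin{equation} \nonumber
W_2(\vf_{\vtheta,t_n}\sharp\mu, p_\delta) \le W_2(\vf_{\vtheta,t_n}\sharp\mu, \vf_{\vtheta,t_n}\sharp p_{t_n}) + W_2(\vf_{\vtheta,t_n}\sharp p_{t_n}, p_\delta).
\end{equation}
The first term is controlled using the $L_f$-Lipschitz property of $\vf_{\vtheta,t_n}$ (Assumption~\ref{Lips cond consis}): pushing forward an optimal coupling of $(\mu, p_{t_n})$ through the map $\vf_{\vtheta,t_n}$ applied in both coordinates gives a valid coupling of the two pushforwards, and the cost contracts by at most $L_f$, yielding $W_2(\vf_{\vtheta,t_n}\sharp\mu, \vf_{\vtheta,t_n}\sharp p_{t_n}) \le L_f\, W_2(\mu, p_{t_n})$.

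For the second term, I would exploit the exact consistency property $\vf^{\text{ex}}(\vx_{t_n}, t_n) = \vx_\delta$, which means $\vf^{\text{ex}}_{t_n}\sharp p_{t_n} = p_\delta$ (since the exact backward ODE map transports $p_{t_n}$ to $p_\delta$). Hence
\begin{equation} \nonumber
W_2(\vf_{\vtheta,t_n}\sharp p_{t_n}, p_\delta) = W_2(\vf_{\vtheta,t_n}\sharp p_{t_n}, \vf^{\text{ex}}_{t_n}\sharp p_{t_n}) \le \left(\mathbb{E}_{\vx_{t_n}\sim p_{t_n}}[\Vert \vf_\vtheta(\vx_{t_n}, t_n) - \vf^{\text{ex}}(\vx_{t_n}, t_n)\Vert_2^2]\right)^{1/2},
\end{equation}
where the inequality comes from using the diagonal (synchronous) coupling that sends $\vx_{t_n}$ to the pair $(\vf_\vtheta(\vx_{t_n},t_n), \vf^{\text{ex}}(\vx_{t_n},t_n))$. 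The right-hand side is exactly the quantity bounded in Theorem~\ref{thm-mapping-error}, so plugging in that bound gives the claimed $t_n(\varepsilon_{\text{cm}} + L_f\varepsilon_{\text{sc}} + L_f L_s^{3/2} d^{1/2} h) + t_n^{1/2} L_f L_s d^{1/2} h$ term. Summing the two contributions yields~\eqref{eq-W2-error}.

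The main obstacle is not in this theorem itself — once Theorem~\ref{thm-mapping-error} is in hand, the argument above is essentially a two-line coupling computation — but rather it is verifying cleanly that $\vf^{\text{ex}}_{t_n}\sharp p_{t_n} = p_\delta$, i.e. that the exact probability-flow ODE map indeed transports the time-$t_n$ marginal to the time-$\delta$ marginal. This is the defining property of the probability flow ODE~\eqref{pfode-general} (its solution trajectories sampled at time $t$ are distributed as $p_t$), so running it backward from $t_n$ to $\delta$ carries $p_{t_n}$ to $p_\delta$; one should just make sure the early-stopping time $\delta$ and the smoothness/existence conditions on $\vv^{\text{ex}}$ guaranteeing $\vf^{\text{ex}}$ is well-defined (mentioned after~\eqref{consistency}) are in force under Assumptions~\ref{normalized data}--\ref{Lips cond score}. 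A secondary point to handle carefully is measurability of $\vf_\vtheta$ and $\vf^{\text{ex}}$ so that the synchronous coupling is a legitimate transport plan, but this follows from the Lipschitz/smoothness assumptions. The statement allowing $\mu$ to be \emph{any} probability density is what makes the bound useful downstream: taking $\mu = q_{t_n}$ (the empirical-ODE marginal, close to $\pi$ at $t_n = T$) and bounding $W_2(q_{t_n}, p_{t_n})$ separately then chains into the final one-step guarantee.
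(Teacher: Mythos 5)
Your proposal is correct and follows essentially the same route as the paper: the paper takes a single coupling $\gamma\in\Gamma(\mu,p_{t_n})$, maps it through $(\vf_\vtheta(\cdot,t_n),\vf^{\text{ex}}(\cdot,t_n))$, and splits via Minkowski's inequality in $L^2(\gamma)$ into the Lipschitz term $L_f W_2(\mu,p_{t_n})$ and the estimation term from Theorem~\ref{thm-mapping-error}, which is the same decomposition you obtain by applying the $W_2$ triangle inequality through the intermediate measure $\vf_{\vtheta,t_n}\sharp p_{t_n}$. Both arguments rest on the same two ingredients (Assumption~\ref{Lips cond consis} and Theorem~\ref{thm-mapping-error}) together with the fact that $\vf^{\text{ex}}_{t_n}\sharp p_{t_n}=p_\delta$.
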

As a consequence, we directly get the one-step generation error.
\begin{cor}[see Section \ref{Onestep W2 error} of Appendix] \label{thm-onestep-error}
    Under Assumptions \ref{normalized data}-\ref{Timestep schedule}, when $T > L_s^{-1}$, the one-step generating error is bounded as follows,
    \begin{equation}\label{onestep err to zero}
        W_2(\vf_{\vtheta,T}\sharp \mathcal{N}(\vzero,\mI_d),\pdata) \lesssim (d^{\frac{1}{2}} \vee m_2) L_f e^{-T} + T (\varepsilon_{\text{cm}} + L_f \varepsilon_{\text{sc}} + L_f L_s^{\frac{3}{2}} d^{\frac{1}{2}} h)  + (d^{\frac{1}{2}} \vee m_2)\delta^{\frac{1}{2}}.
    \end{equation}
    In particular, for any $\varepsilon>0$, if we set $\delta \asymp \frac{\varepsilon^2}{d\vee m_2^2}$, $T \ge O(\log(\frac{L_f(\sqrt{d}\vee R)}{\varepsilon})),$ step size $h = O(\frac{\varepsilon}{TL_fL_s^{3/2}d^{1/2}})$, $\varepsilon_\text{cm} = O(\frac{\varepsilon}{T}), \varepsilon_\text{sc} = O(\frac{\varepsilon}{L_fT})$, we can guarantee $ W_2(\vf_{\vtheta,T}\sharp \mathcal{N}(\vzero,\mI_d),\pdata) \lesssim \varepsilon$.
\end{cor}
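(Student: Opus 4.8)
The plan is to derive Corollary~\ref{thm-onestep-error} directly from Theorem~\ref{thm-W2-error}, which already contains all the substantive analysis, by instantiating it at the terminal time $t_n = T$ with the prior $\mu = \pi = \mathcal{N}(\vzero,\mI_d)$, and then supplying two elementary coupling estimates for the OU convolution \eqref{OUscheduler} — one at large time $T$ and one at small time $\delta$. Instantiating \eqref{eq-W2-error} gives
\[
W_2(\vf_{\vtheta,T}\sharp\pi,\, p_\delta) \lesssim L_f\, W_2(\pi, p_T) + T(\varepsilon_{\text{cm}} + L_f\varepsilon_{\text{sc}} + L_f L_s^{3/2} d^{1/2} h) + T^{1/2} L_f L_s d^{1/2} h .
\]
The hypothesis $T > L_s^{-1}$ yields $(TL_s)^{1/2} \ge 1$, hence $T^{1/2} L_f L_s d^{1/2} h \le T L_f L_s^{3/2} d^{1/2} h$, so the last term is absorbed into the middle group; this is precisely the role of that hypothesis.

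Next I would establish the two OU estimates. Writing $\vx_t = e^{-t}\vx_0 + \sqrt{1-e^{-2t}}\,\vz$ with $\vx_0 \sim \pdata$ and $\vz \sim \mathcal{N}(\vzero,\mI_d)$ independent, this representation serves as an explicit coupling. Coupling $\vx_T$ with $\vz$ gives $\mathbb{E}\Vert \vx_T - \vz\Vert_2^2 \le 2 e^{-2T} m_2^2 + 2(1-\sqrt{1-e^{-2T}})^2 d \le 2 e^{-2T} m_2^2 + 2 e^{-4T} d$ (using $1-\sqrt{1-u}\le u$), so $W_2(\pi, p_T) \lesssim e^{-T}(m_2 \vee d^{1/2})$. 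Coupling $\vx_\delta$ with $\vx_0$ gives $\mathbb{E}\Vert \vx_\delta - \vx_0\Vert_2^2 \le 2(1-e^{-\delta})^2 m_2^2 + 2(1-e^{-2\delta}) d \lesssim \delta^2 m_2^2 + \delta d$, so $W_2(p_\delta, \pdata) \lesssim \delta^{1/2}(m_2 \vee d^{1/2})$ for $\delta \le 1$.

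Combining via the triangle inequality $W_2(\vf_{\vtheta,T}\sharp\pi, \pdata) \le W_2(\vf_{\vtheta,T}\sharp\pi, p_\delta) + W_2(p_\delta, \pdata)$ and substituting the two estimates yields exactly \eqref{onestep err to zero}. For the quantitative statement, I would then balance the three groups of terms: choose $\delta \asymp \varepsilon^2/(d \vee m_2^2)$ so the last term is $O(\varepsilon)$; choose $T \gtrsim \log(L_f(d^{1/2} \vee m_2)/\varepsilon)$ so the first term is $O(\varepsilon)$; and within the middle group set $h = O(\varepsilon/(T L_f L_s^{3/2} d^{1/2}))$, $\varepsilon_{\text{cm}} = O(\varepsilon/T)$, $\varepsilon_{\text{sc}} = O(\varepsilon/(L_f T))$ so that $T(\varepsilon_{\text{cm}} + L_f\varepsilon_{\text{sc}} + L_f L_s^{3/2} d^{1/2} h) = O(\varepsilon)$; summing gives $W_2(\vf_{\vtheta,T}\sharp\pi, \pdata) \lesssim \varepsilon$. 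One also checks consistency of the constraints, e.g.\ $L_s \ge 1$ and small $\varepsilon$ make the logarithmic lower bound on $T$ compatible with $T > L_s^{-1}$.

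The proof contains no real obstacle, since the heavy lifting is done in Theorems~\ref{thm-mapping-error}--\ref{thm-W2-error}. The only points requiring care are (i) the absorption of the $t_n^{1/2}$-term in \eqref{eq-W2-error}, which is exactly where $T > L_s^{-1}$ is used, and (ii) the bookkeeping in the parameter choice, where one must verify that all error contributions are simultaneously $O(\varepsilon)$ and that the selected $h$ (in particular $h \le 1$) is compatible with the two-stage discretization of Assumption~\ref{Timestep schedule}, so that the schedule with finitely many nodes $N_1, N$ is actually realizable.
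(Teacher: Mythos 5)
Your proposal is correct and follows essentially the same route as the paper: instantiate Theorem~\ref{thm-W2-error} at $t_n=T$ with $\mu=\mathcal{N}(\vzero,\mI_d)$, bound $W_2(\mathcal{N}(\vzero,\mI_d),p_T)\lesssim(\sqrt{d}\vee m_2)e^{-T}$ and $W_2(p_\delta,\pdata)\lesssim(\sqrt{d}\vee m_2)\sqrt{\delta}$ by the explicit OU couplings, combine via the triangle inequality, and tune the parameters. Your explicit remark that $T>L_s^{-1}$ is what lets the $T^{1/2}L_fL_sd^{1/2}h$ term be absorbed into $TL_fL_s^{3/2}d^{1/2}h$ is a point the paper uses only implicitly, and is a welcome clarification.
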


We remark that our discretization complexity $N = O(\frac{T}{h}\log(\frac{1}{h})) = O(\frac{L_f L_s^{3/2}d^{1/2}}{\varepsilon})$ matches state-of-the-art complexity for ODE-type SGMs \cite{Chen2023ThePF,Chen2023RestorationDegradationBL}. This provides some evidence that our descretization bounds are of the correct order.

Note that the error bound in \ref{onestep err to zero} relied on the final time $T$, consistency error $\varepsilon_{\text{cm}}$, score error $\varepsilon_{\text{sc}}$ and step size $h$. Actually we can refine the error and reduce the linear dependency on $T$ to log dependency by Multistep Consistency Sampling that will be introduced in the next section.

\subsection{Multistep Consistency Sampling can reduce the $W_2$ error}
Now let us analysis the effect of Multistep Consistency Sampling introduced in original CM work, Algorithm 1, \cite{Song2023ConsistencyM}, which has been introduced to improve the sample quality by alternating denoising and noise injection steps. Given a sequence of time points $T = t_{n_1} \ge t_{n_2}\ge \cdots$, adapted to the OU noise scheduler \ref{pfsde-exact} , the generating procedure can be written as
\begin{align}
    \vz_1 &:= \vf_\vtheta(\vxi_1, T), \nonumber\\
    \vu_k &:= e^{-(t_{n_k} - \delta)} \vz_{k-1} + \sqrt{1-e^{-2(t_{n_k} - \delta)}} \vxi_k, \nonumber\\
    \vz_k &:= \vf_{\vtheta}(\vu_k, t_{n_k}),
\end{align}
with $\vxi_i$ i.i.d. $N(\vzero,\mI_d)$ distributed, 
and  $q_k:= \text{law}(\vz_k)$ satisfies the following relationship:
\begin{align}
    q_1 &= \vf_{\vtheta,{T}} \sharp \mathcal{N}(\vzero,  \mI_d), \nonumber\\
    \mu_k &:= \left(e^{d(t_{n_k} - \delta)}q_{k-1}(e^{t_{n_k} - \delta}\vx)\right) \ast \mathcal{N}(\vzero, (1-e^{-2(t_{n_k} - \delta)}) \mI_d). \nonumber\\
    q_k &= \vf_{\vtheta,t_{n_k}} \sharp \mu_k, \label{multistep output}
\end{align}
We thus have the following upper bound of the $W_2$ distance between $q_k$ and $p_\delta$. 

\begin{cor}[see Section \ref{Multistep proof} of Appendix] \label{multisteperr}
Under Assumptions \ref{normalized data}-\ref{Timestep schedule},  when $T > L_s^{-1}$, the $W_2$ distance between $q_k$ and $p_\delta$ can be controlled by $q_{k-1}$ and $p_\delta$ as follows,
\begin{equation}\label{W2bound-reccur}
    W_2(q_k, p_\delta) \lesssim  L_fe^{-t_{n_k}} W_2(q_{k-1},p_\delta) +  t_{n_k} (\varepsilon_{\text{cm}} + L_f \varepsilon_{\text{sc}} + L_f L_s^{\frac{3}{2}} d^{\frac{1}{2}} h).
\end{equation}
\end{cor}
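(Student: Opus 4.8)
The plan is to bootstrap Corollary \ref{multisteperr} directly from Theorem \ref{thm-W2-error}, the single-step $W_2$ bound, applied to each stage of the multistep procedure. Recall from \eqref{multistep output} that $q_k = \vf_{\vtheta,t_{n_k}}\sharp\mu_k$, where $\mu_k$ is obtained from $q_{k-1}$ by the exact OU forward evolution over time $t_{n_k}-\delta$ (rescaling plus Gaussian convolution). So the proof splits into two estimates: first, control $W_2(q_k,p_\delta)$ in terms of $W_2(\mu_k,p_{t_{n_k}})$ using Theorem \ref{thm-W2-error} with $\mu = \mu_k$ and $t_n = t_{n_k}$; second, control $W_2(\mu_k,p_{t_{n_k}})$ in terms of $W_2(q_{k-1},p_\delta)$ using the contraction property of the OU semigroup.

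For the first estimate, I would simply invoke \eqref{eq-W2-error} verbatim:
\[
W_2(q_k,p_\delta)=W_2(\vf_{\vtheta,t_{n_k}}\sharp\mu_k,\,p_\delta)\lesssim L_f W_2(\mu_k,p_{t_{n_k}})+ t_{n_k}(\varepsilon_{\text{cm}}+L_f\varepsilon_{\text{sc}}+L_fL_s^{3/2}d^{1/2}h)+t_{n_k}^{1/2}L_fL_sd^{1/2}h.
\]
Here one should double-check that $p_\delta$ plays the role of $p_\delta$ in the theorem (it does, since the early-stopping time is $\delta$ throughout) and that the timestep assumption \ref{Timestep schedule} is in force for the discretization underlying $\vf_\vtheta$, which it is by hypothesis. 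For the second estimate, the key fact is that the map sending a density $\rho$ to $(e^{ds}\rho(e^s\vx))\ast\mathcal{N}(\vzero,(1-e^{-2s})\mI_d)$ is exactly the law of $e^{-s}X+\sqrt{1-e^{-2s}}\,Z$ when $X\sim\rho$, $Z\sim\mathcal{N}(\vzero,\mI_d)$ independent. Coupling optimally under $W_2(q_{k-1},p_\delta)$ and using the \emph{same} Gaussian $Z$ for both, the synchronous coupling gives $W_2(\mu_k,p_{t_{n_k}})\le e^{-(t_{n_k}-\delta)}W_2(q_{k-1},p_\delta)$, because the additive Gaussian parts cancel and the contraction factor $e^{-(t_{n_k}-\delta)}$ comes out of the linear scaling. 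One also needs that applying this OU flow to $p_\delta$ yields exactly $p_{t_{n_k}}$, which follows from the semigroup identity \eqref{OUscheduler}: $p_{t_{n_k}}$ is the OU evolution of $p_0=\pdata$ for time $t_{n_k}$, equivalently the OU evolution of $p_\delta$ for time $t_{n_k}-\delta$.

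Combining the two estimates yields $W_2(q_k,p_\delta)\lesssim L_f e^{-(t_{n_k}-\delta)}W_2(q_{k-1},p_\delta)+t_{n_k}(\varepsilon_{\text{cm}}+L_f\varepsilon_{\text{sc}}+L_fL_s^{3/2}d^{1/2}h)+t_{n_k}^{1/2}L_fL_sd^{1/2}h$. To reach the stated form \eqref{W2bound-reccur}, I would absorb $e^\delta=O(1)$ into the implied constant (so $e^{-(t_{n_k}-\delta)}\lesssim e^{-t_{n_k}}$), and absorb the last term $t_{n_k}^{1/2}L_fL_sd^{1/2}h$ into the preceding one: since $T>L_s^{-1}$ and $t_{n_k}\le T$, one has $t_{n_k}^{1/2}L_s^{1/2}\lesssim t_{n_k}L_s^{1/2}\cdot t_{n_k}^{-1/2}$, so a cleaner route is to note $t_{n_k}^{1/2}\le T^{1/2}$ and $L_s\le L_s^{3/2}$ when $L_s\ge 1$, hence $t_{n_k}^{1/2}L_fL_sd^{1/2}h \lesssim t_{n_k}L_fL_s^{3/2}d^{1/2}h$ provided $t_{n_k}^{1/2}\gtrsim 1$; for small $t_{n_k}$ one instead bounds it crudely using $h\le\delta$-type relations from Assumption \ref{Timestep schedule}. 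This bookkeeping — showing the $t_{n_k}^{1/2}$ term is dominated so that only the single combined error term survives in \eqref{W2bound-reccur} — is the one genuinely fiddly point; everything else is a direct citation of Theorem \ref{thm-W2-error} plus the standard OU contraction.

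The main obstacle I anticipate is getting the OU-contraction step airtight: one must verify that the rescale-then-convolve operation in \eqref{multistep output} is genuinely the time-$(t_{n_k}-\delta)$ OU channel, that it maps $p_\delta$ to $p_{t_{n_k}}$ (semigroup property, needs the noise schedule to compose correctly — $1-e^{-2t_{n_k}}$ versus the two-step variances), and that the synchronous coupling achieves the factor $e^{-(t_{n_k}-\delta)}$ without loss. None of this is deep, but it is exactly where an off-by-$e^{\delta}$ or a wrong variance would creep in; the rest of the proof is essentially a one-line application of the already-proven Theorem \ref{thm-W2-error}.
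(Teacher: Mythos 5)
Your proposal matches the paper's proof essentially verbatim: the paper also applies Theorem \ref{thm-W2-error} with $\mu=\mu_k$ and bounds $W_2(\mu_k,p_{t_{n_k}})$ by the synchronous coupling $\hat\mY = e^{-(t_{n_k}-\delta)}\mY+\sqrt{1-e^{-2(t_{n_k}-\delta)}}\,\vxi$, $\hat\mZ = e^{-(t_{n_k}-\delta)}\mZ+\sqrt{1-e^{-2(t_{n_k}-\delta)}}\,\vxi$ with a shared Gaussian, yielding the factor $L_fe^{-(t_{n_k}-\delta)}$. The bookkeeping you flag (absorbing $e^{\delta}$ and the $t_{n_k}^{1/2}L_fL_sd^{1/2}h$ term, the latter via $t_{n_k}\gtrsim L_s^{-1}$ so that $t_{n_k}^{1/2}L_s\le t_{n_k}L_s^{3/2}$) is handled no more carefully in the paper, which simply drops that term in the statement.
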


Note that in \eqref{W2bound-reccur}, we have an exponentially small multiplier $e^{-t_{n_k}}$ gradually reduce the error introduced from the previous steps, and  a $t_{n_k}$-linear term representing the error introduced from the current step. By choosing a suitable time schedule $\{t_{n_k}\}_{k\ge 1}$, we can get a finer bound in $W_2$.

\begin{cor}[see Section \ref{Multistep proof} of Appendix]\label{MultistepW2Err}
Under Assumptions \ref{normalized data}-\ref{Timestep schedule}, there exists $T \ge t_{\hat n} \ge \max(\log (2L_f) + \delta, L_s^{-1})  $,  $\hat n \in [\![1,N]\!]$,  such that when taking $n_k \equiv \hat n$ for all $k$,
$$ W_2(q_k, \pdata )\lesssim  (\log( L_f) + 2^{-k}T) (\varepsilon_{\text{cm}} + L_f\varepsilon_{\text{sc}} + L_f L_s^{\frac{3}{2}}d^{\frac{1}{2}}h) + 2^{-k}(d^{\frac{1}{2}}\vee m_2)L_fe^{-T}+  (d^{\frac{1}{2}}\vee m_2)\delta^{\frac{1}{2}}.$$
Thus, for any $\varepsilon>0$, if we set $\delta \asymp \frac{\varepsilon^2}{d\vee m_2^2}$,  $k = O(\log(T \vee (\frac{(d\vee m_2^2)L_f}{\varepsilon})))$,  $h = O(\frac{\varepsilon}{\log(L_f)L_fL_s^{3/2}d^{1/2}})$, $\varepsilon_\text{cm} = O(\frac{\varepsilon}{\log(L_f)}), \varepsilon_\text{sc} = O(\frac{\varepsilon}{L_f\log(L_f)})$, we can guarantee $ W_2(q_k,\pdata) \lesssim \varepsilon$.
\end{cor}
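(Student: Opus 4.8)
The plan is to derive Corollary \ref{MultistepW2Err} from the one-step recursion in Corollary \ref{multisteperr} by a carefully chosen constant time schedule and an induction on $k$. First I would fix the time index $\hat n$: by Assumption \ref{Timestep schedule} the available grid points range densely enough (step size $h$ in the first stage, geometrically shrinking in the second) that we can select $t_{\hat n}$ with $t_{\hat n} \ge \max(\log(2L_f)+\delta, L_s^{-1})$ and $t_{\hat n} = O(\log(2L_f))$ — i.e.\ just large enough to make the contraction factor $L_f e^{-t_{\hat n}} \le \tfrac12$. Setting $n_k \equiv \hat n$ for all $k$, Corollary \ref{multisteperr} specializes to
\begin{equation} \nonumber
W_2(q_k, p_\delta) \le \tfrac12 W_2(q_{k-1}, p_\delta) + C\, t_{\hat n}\,\bigl(\varepsilon_{\text{cm}} + L_f \varepsilon_{\text{sc}} + L_f L_s^{3/2} d^{1/2} h\bigr)
\end{equation}
for an absolute constant $C$, with base case $W_2(q_1,p_\delta)$ bounded by Theorem \ref{thm-W2-error} applied to $\mu = \mathcal{N}(\vzero,\mI_d)$, $t_{n_1}=T$.

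Next I would unroll the recursion. A geometric-series argument gives
\begin{equation} \nonumber
W_2(q_k,p_\delta) \lesssim 2^{-(k-1)} W_2(q_1,p_\delta) + t_{\hat n}\,\bigl(\varepsilon_{\text{cm}} + L_f \varepsilon_{\text{sc}} + L_f L_s^{3/2} d^{1/2} h\bigr),
\end{equation}
and for the base term I would use the one-step bound of Corollary \ref{thm-onestep-error}-type reasoning: $W_2(q_1,p_\delta) \lesssim (d^{1/2}\vee m_2)L_f e^{-T} + T(\varepsilon_{\text{cm}} + L_f\varepsilon_{\text{sc}} + L_f L_s^{3/2}d^{1/2}h)$. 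Substituting $t_{\hat n} = O(\log(2L_f)) = O(\log L_f)$ and combining the $2^{-(k-1)}T$ contribution from the base term with the $\log L_f$ contribution from the accumulated steps yields the claimed prefactor $(\log(L_f) + 2^{-k}T)$ on the estimation-error bundle, the term $2^{-k}(d^{1/2}\vee m_2)L_f e^{-T}$ from the base, and finally the irreducible $(d^{1/2}\vee m_2)\delta^{1/2}$ from passing between $p_\delta$ and $\pdata$ (via $W_2(p_\delta,\pdata) \lesssim (d^{1/2}\vee m_2)\delta^{1/2}$, which should already be available from the analysis underlying Corollary \ref{thm-onestep-error}). The parameter choices in the last sentence then follow by balancing: pick $k$ large enough that $2^{-k}T$ and $2^{-k}(d\vee m_2^2)L_f/\varepsilon$-type terms are $O(1)$ relative to $\log L_f$, pick $\delta \asymp \varepsilon^2/(d\vee m_2^2)$ to kill the last term, and pick $h,\varepsilon_{\text{cm}},\varepsilon_{\text{sc}}$ so the estimation bundle times $\log L_f$ is $O(\varepsilon)$.

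The main obstacle I anticipate is the selection of $\hat n$: I need a grid point whose value lies in the window $[\max(\log(2L_f)+\delta, L_s^{-1}),\, O(\log L_f)]$, and this requires checking that Assumption \ref{Timestep schedule}'s two-stage schedule actually places a node there. In the first stage the spacing is the constant $h$, which is small (of order $\varepsilon$), so there is certainly a node within $h$ of any target value $\ge h$; one then has to confirm $\log(2L_f) \ge h$ under the eventual choice of $h$, or otherwise land in the geometric second stage — a short but slightly fiddly case check. A secondary subtlety is that Corollary \ref{multisteperr} is stated for a general decreasing sequence $t_{n_1}\ge t_{n_2}\ge\cdots$ starting at $T$, whereas here I want $n_1$ such that $t_{n_1}=\hat n$ rather than $T$; I would note that the recursion \eqref{W2bound-reccur} is driven only by $t_{n_k}$ and holds for any admissible choice, so taking $n_k\equiv\hat n$ from the start (with $q_0 = \pi$ effectively, or equivalently folding the first genuine step into the base case) is legitimate — this just needs a sentence of justification rather than new work. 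Everything else is routine summation of a geometric series and bookkeeping of constants.
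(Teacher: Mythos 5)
Your proposal is correct and follows essentially the same route as the paper: pick $t_{\hat n}\approx\log(2L_f)+\delta$ so the contraction factor $L_f e^{-(t_{\hat n}-\delta)}$ is about $\tfrac12$, iterate the recursion of Corollary \ref{multisteperr} (the paper phrases the geometric-series unrolling as a fixed-point contraction, which is the same computation), seed it with the one-step bound for $\mathcal{E}_1$, and finish with the triangle inequality via $W_2(p_\delta,\pdata)\lesssim(d^{1/2}\vee m_2)\delta^{1/2}$. The grid-point existence check you flag is indeed glossed over in the paper, but it is the minor verification you describe, not a gap in your argument.
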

\begin{rmk}
Comparing the result between multistep sampling error \ref{MultistepW2Err} and one step sampling error \ref{thm-onestep-error}, the main improvement from multistep sampling is getting rid of the dependency from $T$. In one step sampling, one should take the step size smaller, and train the consistency model and score model better. Besides, multistep sampling \ref{MultistepW2Err} only requires $T 
\ge \max(\log (2L_f) + \delta, L_s^{-1})$, while one step sampling \ref{thm-onestep-error} requires $T \ge O(\log(\frac{L_f(\sqrt{d}\vee R)}{\varepsilon}))$, which is an added benefit that multistep sampling requires lower training complexity comparing to one step sampling.
\end{rmk}

\subsection{Bounding the TV error}
In the sections before, we have showed that the generated distribution of Consistency Models are close to the true data distribution in the metric of Wasserstein-2 distance. When we turn to the Total Variational (TV) distance, however, the error bound is deficient as the situation for the probability flow ODEs, in contrast to the situation for the probability flow SDEs. Here we introduce two 
operations that can further bound the TV error.
\subsubsection{Bounding the TV error by forward OU process} \label{OUsmoothing}
Let the forward OU process be 
\begin{equation} \label{forwardOU}
    \dd{\vx}_t = -\vx_t \dd t + \sqrt{2} \dd \vw_t,
\end{equation}
and denote the Markov kernel $P_{\text{OU}}^s$ to be defined by the equation \ref{forwardOU}, that is, if $\vx_t \sim p$ for some $p$ be a distribution over $\mathbb{R}^d$, $\vx_{t+s} \sim p P_{\text{OU}}^s$. Let $q$ be the output of our Consistency Models, either the one step consistency sampling result, or the k-th multistep consistency sampling result. To control the TV error, we smooth the generated sample by the forward OU process with a small time that is the same as the early stopping time $\delta$, and then we can get the TV distence between $q P_{\text{OU}}^\delta$ and $\pdata$:

\begin{cor}
    [see Section \ref{OUproof} of Appendix] \label{end-to-end TV}
    Under Assumptions \ref{normalized data}-\ref{Timestep schedule}, suppose $q$ is: (1) the one step consistency sampling result, $q = q_1 = \vf_{\vtheta,T}\sharp \mathcal{N}(\vzero,\mI_d)$;  (2) the k-th multistep consistency sampling result, $q = q_k$ defined as in \eqref{multistep output} with multistep schedule as in Corollary \ref{MultistepW2Err}.
    Choose the early stopping time $\delta \asymp \frac{\varepsilon^2}{L_s^2(d\vee m_2^2)}$ for some $\varepsilon > 0$, then if $T \ge \max(\log (2L_f) + \delta, L_s^{-1}), $
    \begin{align}
        \text{TV}&(q_1 P_{\text{OU}}^\delta, \pdata) \nonumber\\
        &\lesssim \frac{L_s L_f (d \vee m_2^2)}{\varepsilon}e^{-T} + \frac{L_s  (d^{\frac{1}{2}} \vee m_2)}{\varepsilon} T (\varepsilon_{\text{cm}} + L_f \varepsilon_{\text{sc}} + L_f L_s^{\frac{3}{2}} d^{\frac{1}{2}} h) + \varepsilon, \nonumber\\
        \text{TV}&(q_k P_{\text{OU}}^\delta, \pdata) \nonumber\\
        &\lesssim \frac{L_s  (d^{\frac{1}{2}} \vee m_2)}{\varepsilon} [(\log L_f + \frac{T}{2^k})   (\varepsilon_{\text{cm}} + L_f \varepsilon_{\text{sc}} + L_f L_s^{\frac{3}{2}} d^{\frac{1}{2}} h)+ \frac{(d^{\frac{1}{2}}\vee m_2)L_f}{2^k e^T}]+ \varepsilon.\nonumber
    \end{align}
    In particular, 
    \begin{enumerate}
        \item If we set $T = O(\log (\frac{(d\vee m_2^2)L_fL_s}{\varepsilon^4}))$, $h = O(\frac{\varepsilon^2}{L_fL_s^{5/2}(d\vee m_2^2)T})$, and if $\varepsilon_{sc} \le O(\frac{\varepsilon^2}{TL_fL_s(d^{1/2} \vee m_2)})$, $\varepsilon_{cm} \le O(\frac{\varepsilon^2}{TL_s(d^{1/2} \vee m_2)})$, then we can guarantee TV error $O(\varepsilon)$ with one step generation and one additional smoothing (with no NN evaluation);
        \item  If we set $k = O(\log(T \vee (\frac{(d\vee m_2^2)L_fL_s}{\varepsilon})))$, $h = O(\frac{\varepsilon^2}{\log(L_f)L_fL_s^{5/2}(d\vee m_2^2)})$ and if $\varepsilon_{sc} \le O(\frac{\varepsilon^2}{\log(L_f)L_fL_s(d^{1/2} \vee m_2)}),$ $\varepsilon_{cm} \le O(\frac{\varepsilon^2}{\log(L_f)L_s(d^{1/2} \vee m_2)})$, then we can guarantee TV error $O(\varepsilon)$ with $k$ steps generations and one additional smoothing (with no NN evaluation).
    \end{enumerate}
\end{cor}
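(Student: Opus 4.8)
The plan is to convert the Wasserstein guarantees of Theorem~\ref{thm-W2-error}, Corollary~\ref{thm-onestep-error} and Corollary~\ref{MultistepW2Err} into total‑variation guarantees by exploiting the fact that the forward kernel $P_{\text{OU}}^\delta$ convolves its input with a Gaussian of width $\sqrt{1-e^{-2\delta}}\asymp\sqrt{\delta}$, and that Gaussian convolution upgrades $W_2$‑closeness to $\text{TV}$‑closeness at the cost of a factor $1/\sqrt{\delta}$. The first thing to get right is \emph{which} $W_2$ bound to feed in: one needs a bound on $W_2(q,p_\delta)$, not on $W_2(q,\pdata)$, because the early‑stopping gap $W_2(p_\delta,\pdata)\lesssim (d^{1/2}\vee m_2)\sqrt{\delta}$ would become $O(d^{1/2}\vee m_2)$ after division by $\sqrt{\delta}$ and destroy the bound. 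For $q=q_1$, Theorem~\ref{thm-W2-error} with $\mu=\mathcal{N}(\vzero,\mI_d)$ and $t_n=T$ gives $W_2(q_1,p_\delta)\lesssim L_f\,W_2(\mathcal{N}(\vzero,\mI_d),p_T)+T(\varepsilon_{\text{cm}}+L_f\varepsilon_{\text{sc}}+L_fL_s^{3/2}d^{1/2}h)$, and the $W_2$‑contractivity of the OU semigroup together with the independent coupling give $W_2(\mathcal{N}(\vzero,\mI_d),p_T)\le e^{-T}W_2(\pdata,\mathcal{N}(\vzero,\mI_d))\lesssim (d^{1/2}\vee m_2)e^{-T}$. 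For $q=q_k$, I would unroll the one‑step recursion of Corollary~\ref{multisteperr} along the constant schedule $n_k\equiv\hat n$ of Corollary~\ref{MultistepW2Err}: the choice $t_{\hat n}\ge\log(2L_f)+\delta$ makes the multiplier $L_fe^{-t_{\hat n}}\le\tfrac12$, so the recursion contracts and summing the geometric series yields $W_2(q_k,p_\delta)\lesssim 2^{-k}W_2(q_1,p_\delta)+t_{\hat n}(\varepsilon_{\text{cm}}+L_f\varepsilon_{\text{sc}}+L_fL_s^{3/2}d^{1/2}h)$ with $t_{\hat n}\lesssim\log L_f$.

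Next comes the core conversion. Writing $P_{\text{OU}}^\delta$ as ``multiply by $e^{-\delta}$, then convolve with $\mathcal{N}(\vzero,(1-e^{-2\delta})\mI_d)$'', and using that scaling by $e^{-\delta}\le 1$ is $1$‑Lipschitz (hence does not increase $W_2$) together with the dimension‑free estimate $\text{TV}(\nu_1\ast\gamma_\sigma,\nu_2\ast\gamma_\sigma)\le W_2(\nu_1,\nu_2)/(2\sigma)$ for $\gamma_\sigma=\mathcal{N}(\vzero,\sigma^2\mI_d)$ (bound the $\text{TV}$ of two shifted Gaussians along the optimal coupling), I obtain
\[
\text{TV}\big(qP_{\text{OU}}^\delta,\;p_\delta P_{\text{OU}}^\delta\big)\;\le\;\frac{W_2(q,p_\delta)}{2\sqrt{1-e^{-2\delta}}}\;\lesssim\;\frac{W_2(q,p_\delta)}{\sqrt{\delta}},
\]
using $1-e^{-2\delta}\ge\delta$ for small $\delta$. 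Since $p_\delta P_{\text{OU}}^\delta=p_{2\delta}$, the triangle inequality gives $\text{TV}(qP_{\text{OU}}^\delta,\pdata)\lesssim W_2(q,p_\delta)/\sqrt{\delta}+\text{TV}(p_{2\delta},\pdata)$. The last term is a pure early‑stopping term: under Assumptions~\ref{normalized data} and~\ref{Lips cond score} a standard estimate gives $\text{TV}(p_{2\delta},\pdata)\lesssim L_s(d^{1/2}\vee m_2)\sqrt{\delta}$, which is $\lesssim\varepsilon$ exactly for the stated choice $\delta\asymp\varepsilon^2/(L_s^2(d\vee m_2^2))$ (this is precisely why $\delta$ here carries an extra factor $L_s^2$ compared with the $W_2$ corollaries), and it accounts for the ``$+\varepsilon$'' in both displayed inequalities.

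It then remains to substitute $1/\sqrt{\delta}\asymp L_s(d^{1/2}\vee m_2)/\varepsilon$ into $W_2(q,p_\delta)/\sqrt{\delta}$ using the bounds of the first paragraph; with $(d^{1/2}\vee m_2)^2=d\vee m_2^2$ and $d^{1/2}(d^{1/2}\vee m_2)\le d\vee m_2^2$ one reads off exactly the asserted inequalities for $\text{TV}(q_1P_{\text{OU}}^\delta,\pdata)$ and $\text{TV}(q_kP_{\text{OU}}^\delta,\pdata)$, and the two ``in particular'' statements follow by plugging in the prescribed $T$ (resp.\ $k$), $h$, $\varepsilon_{\text{sc}}$, $\varepsilon_{\text{cm}}$ and checking term by term that each of the three summands is $O(\varepsilon)$ — e.g.\ $T\asymp\log((d\vee m_2^2)L_fL_s/\varepsilon^4)$ forces the $e^{-T}$ term down to $O(\varepsilon^3)$, the prescribed $h$ makes $\tfrac{L_s(d^{1/2}\vee m_2)}{\varepsilon}\,T\,L_fL_s^{3/2}d^{1/2}h=O(\varepsilon)$, and similarly for $\varepsilon_{\text{sc}},\varepsilon_{\text{cm}}$, with the multistep case identical up to replacing $T$ by $\log L_f$ and choosing $k$ to kill the $2^{-k}$ factors. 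I expect the only genuinely non‑mechanical step to be the early‑stopping bound $\text{TV}(p_{2\delta},\pdata)\lesssim\varepsilon$, the delicate point being to carry it out using \emph{only} the second‑moment Assumption~\ref{normalized data} and the Lipschitz‑score Assumption~\ref{Lips cond score}, so that no extra hypotheses creep in; this is the same early‑stopping device already used by \cite{Chen2022SamplingIA} and \cite{Lee2022ConvergenceOS}. Everything else — the Gaussian smoothing lemma, the $W_2$‑ergodicity of the OU semigroup, unrolling the geometric recursion of Corollary~\ref{multisteperr}, and the final parameter bookkeeping — is routine once the $W_2$ bounds are taken against $p_\delta$ rather than $\pdata$.
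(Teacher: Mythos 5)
Your proposal is correct and follows essentially the same route as the paper: bound $\mathrm{TV}(qP_{\text{OU}}^\delta,p_\delta P_{\text{OU}}^\delta)\lesssim W_2(q,p_\delta)/\sqrt{\delta}$ via a coupling and the TV between shifted Gaussians (the paper's Lemma \ref{OU regularization}), feed in the $W_2$ bounds against $p_\delta$ from Theorem \ref{thm-W2-error} and the unrolled recursion of Corollary \ref{multisteperr}, and absorb the early-stopping term $\mathrm{TV}(p_{2\delta},\pdata)\lesssim\varepsilon$ (the paper invokes Lemma 6.4 of \cite{Lee2022ConvergenceOS}) before the final parameter bookkeeping. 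Your explicit observation that one must use $W_2(q,p_\delta)$ rather than $W_2(q,\pdata)$ is exactly the point the paper uses implicitly.
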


\subsubsection{Bounding the TV error by Underdamped Langevin Corrector}
We may adopt the idea from \cite{Chen2023ThePF}, who introduce the Langevin-correcting procedure into the probability flow ODEs to get a TV error guarantee. 

The Langevin dynamics for correcting purpose is defined as follows: let $p$ be a distribution over $\mathbb{R}^d$, and write $U$ as a shorthand for the potential $-\log p$.

Given a friction parameter $\gamma > 0$,  consider the following discretized process with step size $\tau$, where $-\nabla U$ is replaced by a score estimate $\vs$. Let $(\hat z_t, \hat v_t)_{t\ge 0}$ over $\mathbb{R}^d \otimes \mathbb{R}^d$ be given by
\begin{align} \label{ULMC}
    \dd \hat \vz_t &= \hat\vv_t \dd t, \nonumber\\
    \dd \hat \vv_t &= (\vs(\hat \vz_{\lfloor t/\tau\rfloor \tau}) - \gamma \hat \vv_t) \dd t + \sqrt{2\gamma} \dd \vw_t.
\end{align}

Denote the Markov kernel $\hat P_{\text{ULMC}}$ to be defined by the equation \ref{ULMC}, that is, if $(\hat \vz_{k\tau}, \hat \vv_{k\tau}) \sim \vmu$ for some $\vmu$ be a distribution over $\mathbb{R}^{d\times d}$, $(\hat \vz_{(k+1)\tau}, \hat \vv_{(k+1)\tau}) \sim \vmu \hat P_{\text{ULMC}}$. We denote the $k$-th composition $\hat P_{\text{ULMC}}^k = \hat P_{\text{ULMC}} \circ \hat P_{\text{ULMC}}^{k-1} $, and $\vq = q \otimes \mathcal{N}(0,\mI_d), \vp = p \otimes \mathcal{N}(0,\mI_d).$ In what follows, we abuse the notation as follows. Given a distribution $q$ on $\mathbb{R}^d$, we write $q\hat P_{\text{ULMC}}$ to denote the projection onto the $\vz-$coordinates of $\vq \hat P_{\text{ULMC}}$. It's obvious that 
\begin{equation}\nonumber
    \text{TV}(q\hat P_{\text{ULMC}}^N,p) \le \text{TV}(\vq\hat P_{\text{ULMC}}^N,\vp).
\end{equation}

 Now let's take $p = \pdata$ as the data distribution, and $q = q_k$ for some $k \ge 1$ as the output distribution of $k$-th multistep consistency sampling defined as in \eqref{multistep output}. We can use the score model $\vs_\vphi(\vx, \delta) \approx \nabla \log p_{\delta}(\vx)$ to do corrector steps. The end-to-end error now can be written as follows:

\begin{cor} [see Section \ref{ULMCproof} of Appendix] \label{TV-Langevin-corrector}
    Under Assumptions \ref{normalized data}-\ref{Timestep schedule}, suppose $q$ is: (1) the one step consistency sampling result, $q = q_1 = \vf_{\vtheta,T}\sharp \mathcal{N}(\vzero,\mI_d)$;  (2) the k-th multistep consistency sampling result, $q = q_k$ defined as in \eqref{multistep output} with multistep schedule as in Corollary \ref{MultistepW2Err}.   Choose $\gamma \asymp L_s$, and $\delta \asymp \frac{\varepsilon^2}{L_s^2(d\vee m_2^2)}$ for some $\varepsilon > 0$, then if $T \ge \max(\log (2L_f) + \delta, L_s^{-1}),N\tau \asymp \frac{1}{\sqrt{L_s}}$,
    \begin{align}
        \text{TV}&(q_1\hat P_{\text{ULMC}}^N,\pdata)\nonumber\\
        &\lesssim (d^{\frac{1}{2}}\vee m_2)L_fL_s^{\frac{1}{2}} e^{-T} + TL_s^{\frac{1}{2}} ( \varepsilon_{\text{cm}} + L_f\varepsilon_{\text{sc}} + L_f L_s^{\frac{3}{2}} d^{\frac{1}{2}} h ) + L_s^{-\frac{1}{2}} \varepsilon_{\text{sc}} + L_s^{\frac{1}{2}}d^{\frac{1}{2}}\tau + \varepsilon, \nonumber\\
        \text{TV}&(q_k\hat P_{\text{ULMC}}^N,\pdata) \nonumber\\
        &\lesssim (\log( L_f) + \frac{T}{2^k}) L_s^{\frac{1}{2}}(\varepsilon_{\text{cm}} + L_f\varepsilon_{\text{sc}} + L_f L_s^{\frac{3}{2}}d^{\frac{1}{2}}h) + \frac{(d^{\frac{1}{2}}\vee m_2)L_s^{\frac{1}{2}}L_f}{2^{k}e^T} + L_s^{-\frac{1}{2}} \varepsilon_{\text{sc}} + L_s^{\frac{1}{2}}d^{\frac{1}{2}}\tau+ \varepsilon. \nonumber
    \end{align}
    In particular, 
    \begin{enumerate}
        \item if we set $T = O(\log (\frac{(d\vee m_2^2)L_f^2L_s}{\varepsilon^2}))$, $h = O(\frac{\varepsilon}{L_fL_s^2d^{1/2}T})$, $\tau = O(\frac{\varepsilon}{L_s^{1/2}d^{1/2}})$, and if $\varepsilon_{sc} \le O(\frac{\varepsilon}{TL_fL_s^{1/2}})$, $\varepsilon_{cm} \le O(\frac{\varepsilon}{TL_s^{1/2}})$, then we can obtain TV error $O(\varepsilon)$ with one step consistency generation and $O(\frac{\sqrt{d}}{\varepsilon})$ steps correctings;
        \item  if we set $k = O(\log(T \vee (\frac{(d\vee m_2^2)L_fL_s}{\varepsilon})))$, $h = O(\frac{\varepsilon}{\log(L_f)L_fL_s^2d^{1/2}})$, $\tau = O(\frac{\varepsilon}{L_s^{1/2}d^{1/2}})$ and if $\varepsilon_{sc} \le O(\frac{\varepsilon}{\log(L_f)L_fL_s^{1/2}}),$ $\varepsilon_{cm} \le O(\frac{\varepsilon}{\log(L_f)L_s^{1/2}})$, then we can obtain TV error $O(\varepsilon)$ with $k$ steps consistency generation and  $O(\frac{\sqrt{d}}{\varepsilon})$ steps correctings.
    \end{enumerate}
\end{cor}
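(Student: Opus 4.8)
The plan is to split the end-to-end TV error into a \emph{corrector} part and an \emph{early-stopping bias}, handle the corrector part by importing the inexact-score underdamped Langevin analysis of \cite{Chen2023ThePF} fed with the $W_2$ bounds already proven in this paper, and bound the bias by a short-time reverse-OU estimate. By the triangle inequality together with the marginalization inequality $\text{TV}(q\hat P_{\text{ULMC}}^N,p)\le\text{TV}(\vq\hat P_{\text{ULMC}}^N,\vp)$ recorded above,
\begin{equation}\nonumber
\text{TV}(q\hat P_{\text{ULMC}}^N,\pdata)\ \le\ \text{TV}\big(\vq\hat P_{\text{ULMC}}^N,\ p_\delta\otimes\mathcal{N}(\vzero,\mI_d)\big)\ +\ \text{TV}(p_\delta,\pdata),
\end{equation}
and it suffices to control the two terms on the right; the same decomposition serves both $q=q_1$ and $q=q_k$, only the input $W_2$ bound changing.

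For the corrector term I would run $\hat P_{\text{ULMC}}$ toward the target $p_\delta$, which is legitimate because $p_\delta=p_{t_1}$ has an $L_s$-Lipschitz score (Assumption~\ref{Lips cond score} at $t=\delta$) and the estimate $\vs_\vphi(\cdot,\delta)$ used in the corrector has $L^2(p_\delta)$-error at most $\varepsilon_{\text{sc}}$ (Assumption~\ref{score err} at $n=1$). Invoking the ULMC discretization-plus-contraction estimate of \cite{Chen2023ThePF} with friction $\gamma\asymp L_s$, step size $\tau$ and running time $N\tau\asymp L_s^{-1/2}$, and using $W_2(\vq,p_\delta\otimes\mathcal{N}(\vzero,\mI_d))=W_2(q,p_\delta)$ for the product coupling, gives
\begin{equation}\nonumber
\text{TV}\big(\vq\hat P_{\text{ULMC}}^N,\ p_\delta\otimes\mathcal{N}(\vzero,\mI_d)\big)\ \lesssim\ L_s^{1/2}\,W_2(q,p_\delta)\ +\ L_s^{1/2}d^{1/2}\tau\ +\ L_s^{-1/2}\varepsilon_{\text{sc}}.
\end{equation}
For $q=q_1$ I would then plug in Theorem~\ref{thm-W2-error} with $\mu=\mathcal{N}(\vzero,\mI_d)$, $t_n=T$, combined with the OU mixing bound $W_2(\mathcal{N}(\vzero,\mI_d),p_T)\lesssim(d^{1/2}\vee m_2)e^{-T}$ and the hypothesis $T>L_s^{-1}$ (which absorbs the $t_n^{1/2}L_fL_sd^{1/2}h$ term), obtaining $W_2(q_1,p_\delta)\lesssim(d^{1/2}\vee m_2)L_fe^{-T}+T(\varepsilon_{\text{cm}}+L_f\varepsilon_{\text{sc}}+L_fL_s^{3/2}d^{1/2}h)$; for $q=q_k$ I would instead use the intermediate bound on $W_2(q_k,p_\delta)$ produced in the proof of Corollary~\ref{MultistepW2Err} (before the $\delta^{1/2}$ term is added), namely $W_2(q_k,p_\delta)\lesssim(\log L_f+2^{-k}T)(\varepsilon_{\text{cm}}+L_f\varepsilon_{\text{sc}}+L_fL_s^{3/2}d^{1/2}h)+2^{-k}(d^{1/2}\vee m_2)L_fe^{-T}$. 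Multiplying through by $L_s^{1/2}$ reproduces the first two groups of terms in each displayed bound of the corollary.

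For the early-stopping bias I would use $p_\delta=\pdata\,P_{\text{OU}}^\delta$ and run the time-reversal of the forward OU process from $p_\delta$ over the short window $[0,\delta]$: its drift $\vx+2\nabla\log p_{\delta-s}(\vx)$ is controlled in $L^2$ by the $L_s$-Lipschitz scores together with the second-moment bound of Assumption~\ref{normalized data}, and a Girsanov comparison with the driftless diffusion, plus the smoothing estimate $\text{TV}(p_\delta\ast\mathcal{N}(\vzero,2\delta\mI_d),p_\delta)\lesssim L_sd\,\delta$, yields $\text{TV}(p_\delta,\pdata)\lesssim L_s\delta^{1/2}(d^{1/2}\vee m_2)+L_sd\,\delta$; the choice $\delta\asymp\varepsilon^2/(L_s^2(d\vee m_2^2))$ makes this $\lesssim\varepsilon$, which is the trailing term. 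Assembling the three pieces gives the two stated end-to-end inequalities, and the two ``in particular'' conclusions follow by substituting the prescribed $T,h,\tau,\varepsilon_{\text{sc}},\varepsilon_{\text{cm}}$ and balancing; the corrector step count is $N=(N\tau)/\tau\asymp 1/(\sqrt{L_s}\tau)\asymp\sqrt d/\varepsilon$, matching the claimed $O(\sqrt d/\varepsilon)$.

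\textbf{Main obstacle.} The real work is the ULMC estimate in the second paragraph: one needs the underdamped dynamics to turn the initialization error into a \emph{TV} error while paying only a factor $L_s^{1/2}$ on $W_2(q,p_\delta)$ (we do not have closeness of $q$ to $p_\delta$ in TV, only in $W_2$), together with a discretization bound linear in $\tau$ with the correct $L_s,d$ scaling that tolerates an $L^2(p_\delta)$-accurate rather than $L^\infty$-accurate score. This is exactly what \cite{Chen2023ThePF} supplies, so the genuinely new part reduces to (a) checking the hypotheses against Assumptions~\ref{normalized data}--\ref{Lips cond score}, in particular that Assumption~\ref{score err} at $n=1$ furnishes a score error measured against the corrector's own target $p_\delta$; (b) inserting the paper's own $W_2$ bounds; and (c) the short-time early-stopping TV estimate. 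A minor bookkeeping point is that the corrector lives on $\mathbb{R}^{2d}$, so one must pass through $\vq=q\otimes\mathcal{N}(\vzero,\mI_d)$ and the marginalization inequality already noted in the text.
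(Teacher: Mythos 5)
Your proposal matches the paper's own argument essentially step for step: the same triangle-inequality decomposition through $p_\delta$, the same invocation of the underdamped Langevin corrector bound of \cite{Chen2023ThePF} (Lemma \ref{ULMClemma}) with $\gamma \asymp L_s$ and $N\tau \asymp L_s^{-1/2}$ to convert the paper's $W_2$ bounds on $W_2(q_1,p_\delta)$ and $W_2(q_k,p_\delta)$ into TV at a cost of $L_s^{1/2}$, and the same choice of $\delta$ to make the early-stopping term $O(\varepsilon)$. The only cosmetic difference is that you sketch a Girsanov-type argument for $\text{TV}(p_\delta,\pdata)\lesssim\varepsilon$ where the paper simply imports this from Lemma 6.4 of \cite{Lee2022ConvergenceOS}; this does not change the structure of the proof.
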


\begin{rmk}
In case of missing the score model $\vs_{\vphi}$ but only remain the consistency model $\vf_{\vtheta}$, such as training the consistency model by CT objective \ref{CTobject} without a pretrained score model, we can also recover a score model $\hat \vs_\vtheta$ that approx $\nabla \log p_{t_2}(\vx)$: We have that $\hat\vs_\vtheta(\vx) := \frac{\vf_\vtheta(\vx, t_2) - e^{h_1}\vx}{e^{h_1} - 1}$ is a score model approximating $\nabla \log p_{t_2}(\vx)$ with $L^2$ error $\sqrt{\varepsilon^2_{\text{cm}} + \varepsilon^2_{\text{sc}}}$, which is proved in Lemma \ref{creatingscore}. 
 Hence, we  may run similar procedure as in Theorem \ref{TV-Langevin-corrector}, where $q$ should firstly be transformed with forward OU process \eqref{pfsde-exact} with a small time $h_1 = t_2 - \delta < \delta$, then apply the Underdamped Langevin Corrector Operator with $\hat\vs_\vtheta(\vx)$. 
\end{rmk}

\subsection{$W_2$ convergence guarantee for arbitrary data distributions with bounded support} \label{General data assumption}

In this section, we give a sufficient condition to satisfy the Assumption \ref{Lips cond score}: in fact for any compactly supported distribution $\pdata, \text{supp }\pdata \subseteq B(\vzero,R)$, for any $t_0$, we can get a positive $L_s(t_0)$, such that  Assumption \ref{Lips cond score} is satisfied for any $t > t_0$ with Lipschitz constant $L_s(t_0)$. This include a wide range of situations even when $p$ do not have smooth density w.r.t. Lebesgue measure such as when $p$ supported on a lower-dimensional submanifold of $\mathbb{R}^d$, which recently investigated in \cite{Bortoli2022ConvergenceOD, Lee2022ConvergenceOS, Chen2022SamplingIA}.

Namely, based on the folloing lemma, we can conduct regularity properties for the score functions.
\begin{lem}[see Section \ref{bound support proof} of Appendix]\label{Hessian Bound Lemma}
    Suppose that $\text{supp } \pdata \subseteq B(\vzero, R)$ where $R \ge 1$, and let $p_t$ denote the law of the OU process at time t, started at $p$: that is, $p_t(\vx) = e^{dt}\pdata(e^{t}\vx) \ast \mathcal{N}(\bm{0},(1-e^{-2t})  \mI_d)$. Then the Hessian of the score function satisfies:
    \begin{equation}\nonumber
        \Vert\nabla^2 \log p_t(\vx) \Vert_{\text{op}} \le \frac{e^{-2t}R^2}{(1-e^{-2t})^2}  + \frac{1}{1-e^{-2t}}.
    \end{equation}
\end{lem}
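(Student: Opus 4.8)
The plan is to represent $p_t$ as a Gaussian convolution of a \emph{compactly supported} mixing measure and then apply the standard ``Tweedie plus conditional covariance'' identity for the Hessian of the log-density of such a mixture, finally using the support bound to control the covariance term.

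\textbf{Step 1 (rewrite $p_t$ as a narrow Gaussian mixture).} Set $\alpha_t = e^{-t}$ and $\sigma_t^2 = 1-e^{-2t}$. Changing variables $\vx_0 \mapsto \alpha_t\vx_0$ in the definition $p_t(\vx) = e^{dt}\pdata(e^t\vx) \ast \mathcal{N}(\vzero,\sigma_t^2\mI_d)$, we can write $p_t(\vx) = \int g_t(\vx-\vmu)\,\dd\nu(\vmu)$, where $\nu$ is the pushforward of $\pdata$ under $\vx_0 \mapsto \alpha_t\vx_0$ — hence $\mathrm{supp}\,\nu \subseteq B(\vzero,\alpha_t R)$ — and $g_t$ is the density of $\mathcal{N}(\vzero,\sigma_t^2\mI_d)$. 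Equivalently, $\vx_t = \alpha_t\vx_0 + \sigma_t\vz$ with $\vx_0 \sim \pdata$ and $\vz \sim \mathcal{N}(\vzero,\mI_d)$.

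\textbf{Steps 2--3 (first and second order identities).} Using $\nabla g_t(\vy) = -\sigma_t^{-2}\vy\,g_t(\vy)$ and differentiating under the integral, one obtains $\nabla \log p_t(\vx) = \sigma_t^{-2}\bigl(\mathbb{E}_{\nu_\vx}[\vmu] - \vx\bigr)$, where $\nu_\vx$ is the posterior $\nu_\vx(\dd\vmu) \propto g_t(\vx-\vmu)\,\nu(\dd\vmu)$. Differentiating once more and applying the quotient rule, the Jacobian of $\vx \mapsto \mathbb{E}_{\nu_\vx}[\vmu]$ equals $\sigma_t^{-2}\Cov_{\nu_\vx}(\vmu)$, which yields $\nabla^2\log p_t(\vx) = -\sigma_t^{-2}\mI_d + \sigma_t^{-4}\Cov_{\nu_\vx}(\vmu)$. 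This identity is the heart of the argument; it is routine but requires carefully tracking the Gaussian score factors $-\sigma_t^{-2}(\vx-\vmu)$ through the quotient rule.

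\textbf{Step 4 (bound the covariance and conclude).} Since $\nu_\vx$ is supported in $B(\vzero,\alpha_t R)$, for any unit vector $\vv$ we have $\vv^\top\Cov_{\nu_\vx}(\vmu)\vv \le \mathbb{E}_{\nu_\vx}[(\vv^\top\vmu)^2] \le \mathbb{E}_{\nu_\vx}[\Vert\vmu\Vert_2^2] \le \alpha_t^2 R^2$, so $\vzero \preceq \Cov_{\nu_\vx}(\vmu) \preceq \alpha_t^2 R^2\mI_d$. Plugging into the Hessian identity gives $-\sigma_t^{-2}\mI_d \preceq \nabla^2\log p_t(\vx) \preceq (\sigma_t^{-4}\alpha_t^2 R^2 - \sigma_t^{-2})\mI_d$, hence $\Vert\nabla^2\log p_t(\vx)\Vert_{\text{op}} \le \sigma_t^{-2} + \sigma_t^{-4}\alpha_t^2 R^2 = \frac{1}{1-e^{-2t}} + \frac{e^{-2t}R^2}{(1-e^{-2t})^2}$, as claimed. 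The only delicate step is Step 3 (getting the conditional-covariance identity with correct normalization); Steps 1, 2, and 4 are elementary bookkeeping.
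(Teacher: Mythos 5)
Your proposal is correct and follows essentially the same route as the paper: both rewrite $p_t$ as the convolution of the rescaled (hence $e^{-t}R$-supported) data measure with $\mathcal{N}(\vzero,(1-e^{-2t})\mI_d)$, derive the identity $\nabla^2\log p_t(\vx) = -\sigma_t^{-2}\mI_d + \sigma_t^{-4}\Cov_{\nu_\vx}(\vmu)$ via the Gaussian-tilted posterior, and bound the posterior covariance by $e^{-2t}R^2$ using the support radius. The only difference is notational (pushforward measure $\nu$ versus the scaled density $e^{dt}p_0(e^t\vu)$), so nothing substantive distinguishes the two arguments.
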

Note that in our proof of Corollary \ref{thm-onestep-error} and \ref{MultistepW2Err}, we only use the Assumption \ref{Lips cond score} over $t \in [\delta, T]$. Combining Lemma \ref{Hessian Bound Lemma}, we immediately get the following corollary.

\begin{cor}[see Section \ref{bound support proof} of Appendix] \label{Bounded pdata err}
Under Assumptions  \ref{score err}-\ref{Timestep schedule}, suppose that $\text{supp } \pdata \subseteq B(\vzero,R)$ where $R \ge 1$. Let $\delta \asymp \frac{\varepsilon^2}{R^2\vee d}$, then (1) the one-step generating error satisfies $W_2(q_1, \pdata) \lesssim \varepsilon$, provided that $T = O(\log (\frac{L_f (\sqrt{d} \vee R)}{\varepsilon}))$, $h =O(\frac{\varepsilon^7}{d^{1/2}R^3(R^6\vee d^3)L_f T})$,  $\varepsilon_{\text{cm}} = O(\frac{\varepsilon}{T}), \varepsilon_{\text{sc}} = O(\frac{\varepsilon}{L_f T})$ ; (2) the multi-step generating error satisfies $W_2(q_k, \pdata) \lesssim \varepsilon$, provided that  $k = O(\log(T \vee (\frac{(d\vee m_2^2)L_fL_s}{\varepsilon})))$, $T = O(\max(\log (2L_f) + \delta, L_s^{-1}))$,  $h =O(\frac{\varepsilon^7}{d^{1/2}R^3(R^6\vee d^3)L_f \log(L_f)})$,  $\varepsilon_{\text{cm}} = O(\frac{\varepsilon}{\log(L_f)}), \varepsilon_{\text{sc}} = O(\frac{\varepsilon}{L_f \log(L_f)})$.
\end{cor}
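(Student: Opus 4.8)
The plan is to derive Corollary~\ref{Bounded pdata err} as a direct consequence of Lemma~\ref{Hessian Bound Lemma}, Corollary~\ref{thm-onestep-error}, and Corollary~\ref{MultistepW2Err}, by substituting the explicit time-dependent Lipschitz constant into the general bounds. First I would observe that, since in the proofs of Corollaries~\ref{thm-onestep-error} and \ref{MultistepW2Err} the score-Lipschitz assumption is only invoked for $t\in[\delta,T]$, Lemma~\ref{Hessian Bound Lemma} lets us take
\[
L_s = L_s(\delta) = \frac{e^{-2\delta}R^2}{(1-e^{-2\delta})^2} + \frac{1}{1-e^{-2\delta}}.
\]
For $\delta$ small (which it will be, of order $\varepsilon^2/(R^2\vee d)\le 1$), we have $1-e^{-2\delta}\asymp\delta$, so $L_s(\delta) \asymp R^2/\delta^2 + 1/\delta \asymp R^2/\delta^2$ since $R\ge 1$ and $\delta\le 1$. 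With the choice $\delta\asymp\varepsilon^2/(R^2\vee d)$ this gives $L_s\asymp R^2(R^2\vee d)^2/\varepsilon^4$. I would then record $L_s^{1/2}\asymp R(R^2\vee d)/\varepsilon^2$ and $L_s^{3/2}\asymp R^3(R^2\vee d)^3/\varepsilon^6$ for later substitution.

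Next I would plug these expressions into the one-step bound \eqref{onestep err to zero}. The term $(d^{1/2}\vee m_2)\delta^{1/2}$ becomes $O(\varepsilon)$ directly from the choice of $\delta$ (here $m_2\le R$ because the support is in $B(\vzero,R)$, so $d^{1/2}\vee m_2 \le d^{1/2}\vee R$). The term $(d^{1/2}\vee R)L_f e^{-T}$ is $O(\varepsilon)$ precisely when $T = O(\log(L_f(\sqrt d\vee R)/\varepsilon))$, which is the stated choice of $T$. The middle term $T(\varepsilon_{\text{cm}} + L_f\varepsilon_{\text{sc}} + L_fL_s^{3/2}d^{1/2}h)$ is controlled term-by-term: $\varepsilon_{\text{cm}} = O(\varepsilon/T)$ and $\varepsilon_{\text{sc}} = O(\varepsilon/(L_fT))$ each contribute $O(\varepsilon)$, and the requirement $TL_fL_s^{3/2}d^{1/2}h = O(\varepsilon)$ together with $L_s^{3/2}\asymp R^3(R^2\vee d)^3/\varepsilon^6$ forces
\[
h = O\!\left(\frac{\varepsilon}{TL_fL_s^{3/2}d^{1/2}}\right) = O\!\left(\frac{\varepsilon^{7}}{d^{1/2}R^{3}(R^{6}\vee d^{3})L_f T}\right),
\]
which matches the claimed step size. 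The multistep part (2) is entirely analogous, starting from the bound in Corollary~\ref{MultistepW2Err}: the same substitution of $L_s(\delta)$, now with $T = O(\max(\log(2L_f)+\delta, L_s^{-1}))$ (note $L_s^{-1}$ is tiny so this is essentially $\log(2L_f)$), $k = O(\log(T\vee ((d\vee m_2^2)L_fL_s/\varepsilon)))$, and $h = O(\varepsilon^7/(d^{1/2}R^3(R^6\vee d^3)L_f\log L_f))$ with the $T$ in the denominator replaced by $\log L_f$.

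I do not expect any real obstacle here: the corollary is bookkeeping. The one point requiring a little care is verifying that Assumption~\ref{Timestep schedule} is still consistent with the new parameter choices — in particular that the two-stage geometric schedule terminating at $\delta$ remains valid when $\delta$ and $h$ are as prescribed, and that $N = O((T/h)\log(1/h))$ stays finite and polynomial. A second minor point is that Lemma~\ref{Hessian Bound Lemma} only bounds the Hessian for $t>0$, blowing up as $t\to 0$; this is exactly why early stopping at $\delta>0$ is essential, and why the score and consistency error assumptions (Assumptions~\ref{score err}--\ref{consistency err}) need only hold on the grid $t_1=\delta,\dots,t_N=T$. So the logical structure is: establish $L_s(\delta)$ from Lemma~\ref{Hessian Bound Lemma}, confirm the schedule hypotheses, then mechanically substitute into Corollaries~\ref{thm-onestep-error} and \ref{MultistepW2Err} and collect powers of $\varepsilon$, $R$, $d$.
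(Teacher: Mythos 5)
Your proposal matches the paper's own proof essentially step for step: both derive $L_s \asymp R^2(R^2\vee d)^2/\varepsilon^4$ from Lemma~\ref{Hessian Bound Lemma} with the early-stopping choice of $\delta$, then substitute this into Corollaries~\ref{thm-onestep-error} and \ref{MultistepW2Err} and read off the required $h$, $\varepsilon_{\text{cm}}$, $\varepsilon_{\text{sc}}$, $T$ (and $k$). The bookkeeping, including $L_s^{3/2}\asymp R^3(R^6\vee d^3)/\varepsilon^6$ and the bound $m_2\lesssim R$, is correct and identical to the paper's.
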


\section{Conclusion}
In this work, we provided a first convergence guarantee for CMs which holds true under realistic assumptions ($L^2$-accurate score and consistency function surrogates; arbitrarily data distributions with smooth densities respect to Lebesgue measure, or bounded distributions) and which scale at most polynomially in all relavent parameters. Our results take a step towards explaining the success of CMs. We also provide theoretical evidence that multistep CM sampling technique can further reduce the error comparing to one step CM sampling .

There are mainly three shortcomings in our work. Firstly, our proofs relied on the assuming the lipschitz condition on the surrogate model $\vf_\vtheta$, which is unrealistic. We will keep in improving our results by replacing this condition on the exact consistency function $\vf^{\text{ex}}$ in our future work. Secondly, to bound the TV error, we introduced an additional smoothing procedure after the original CM sampling steps. We are seeking better ways to remove this procedure. Lastly, we did not address the question of when the score and consistency function can be learned well. We believe that the resolution of this problem would shed considerable light on CMs.

\subsubsection*{Author Contributions}
If you'd like to, you may include  a section for author contributions as is done
in many journals. This is optional and at the discretion of the authors.

\subsubsection*{Acknowledgments}
Use unnumbered third level headings for the acknowledgments. All
acknowledgments, including those to funding agencies, go at the end of the paper.

\bibliography{iclr2024_conference}
\bibliographystyle{iclr2024_conference}

\appendix
\section{Common Symbols} \label{commom symbols}
\begin{itemize}
    \item $\ast$: the convolution operator defined for two functions $f,g \in L^2(\mathbb{R}^d)$.\\ $f\ast g(\vx) := \int_{\mathbb{R}^d} f(\vx - \vu) g(\vu) d\vu$.
    \item $\lesssim$: less or similar to. If $a \lesssim b$, it means $a \le Cb$ for some constant $C$.
    \item $\sharp$ the push-forward operator associated with a measurable map $f: \mathcal{M} \to \mathcal{N}$. For any measure over $\mathcal{M}$, we may define the push-forward measure $f\sharp \mu$ over $\mathcal{N}$ by: $f\sharp\mu(A) = \mu(f^{-1} (A))$, for any $A$ be measurable set in $\mathcal{N}$.
    \item $\vee$: take the larger one. $a\vee b = \max(a,b)$; \\$\wedge$: take the smaller one. $a\wedge b = \min(a,b)$.
    \item $\asymp$: asymptotic to. If $a_n \asymp b_n$, it means $\lim_{n\to \infty} {a_n}/{b_n} = C$ for some constant $C$.
    \item $[\![ a, b ]\!]:= [a,b]\cap \mathbb{Z}$
\end{itemize}

\section{Proofs }
\subsection{Proofs for Theorem 2}\label{CMErrorproof}
Before we proof our main theorem, we introduce a score perturbation lemma which comes from Lemma 1  in \cite{Chen2023ThePF}.
\begin{lem}[Lemma 1 in \cite{Chen2023ThePF}]\label{scorepurturb}(Score perturbation for $\vx_t$). Suppose $p_t(\vx) = e^{dt}\pdata(e^{t}\vx) \ast \mathcal{N}(\bm{0},(1-e^{-2t})  \mI_d)$ started at $p_0$, and $\vx_0 \sim p_0$, $d \vx_t = -\vx_t - \nabla \log p_t(\vx)$. Suppose that $\Vert \nabla^2 \log p_t(\vx) \Vert_{op} \le L$ for all $\vx \in \mathbb{R}^d$ and $t \in [0,T]$, where $L \ge 1$. Then,
\begin{equation}\nonumber
    \mathbb{E}[\Vert \frac{\partial}{\partial_t} \nabla \log p_t(\vx_t)\Vert_2^2] \lesssim L^2d(L + \frac{1}{t})
\end{equation}
\end{lem}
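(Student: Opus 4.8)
The plan is to reduce the claim to an a priori estimate for a PDE. Since $p_t$ is the time-$t$ marginal of the forward OU flow $\dd X_t = -X_t\,\dd t + \sqrt{2}\,\dd W_t$, it solves the Fokker--Planck equation $\partial_t p_t = \nabla\cdot(\vx p_t) + \Delta p_t$. Dividing by $p_t$ and using $\nabla p_t = p_t\nabla\log p_t$ together with $\Delta p_t/p_t = \Delta\log p_t + \|\nabla\log p_t\|_2^2$ gives
\[
\partial_t\log p_t(\vx) = d + \vx\cdot\nabla\log p_t(\vx) + \|\nabla\log p_t(\vx)\|_2^2 + \Delta\log p_t(\vx),
\]
and applying $\nabla$ (using symmetry of $\nabla^2\log p_t$ for the middle two summands) yields the evolution equation for the score,
\[
\partial_t\nabla\log p_t = \nabla\log p_t + (\nabla^2\log p_t)\,\vx + 2\,(\nabla^2\log p_t)\,\nabla\log p_t + \nabla\Delta\log p_t .
\]
It then suffices to bound the $L^2(p_t)$-norm of each of the four terms on the right and collect.

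For the first three terms I would combine two standard facts. First, the Tweedie/conditional-expectation form of the score: writing $X_t = e^{-t}X_0 + \sqrt{1-e^{-2t}}\,Z$ with $Z\sim\mathcal{N}(\vzero,\mI_d)$ independent of $X_0$, one has $\nabla\log p_t(\vx) = -(1-e^{-2t})^{-1/2}\,\mathbb{E}[Z\mid X_t=\vx]$, so by Jensen $\mathbb{E}_{p_t}\|\nabla\log p_t\|_2^2 \le d/(1-e^{-2t}) \lesssim d(1+1/t)$. Second, the hypothesis $\|\nabla^2\log p_t\|_{\mathrm{op}}\le L$ with $L\ge1$. Hence the first and third terms contribute $\lesssim L^2 d(1+1/t)$, and the second contributes $\lesssim L^2\,\mathbb{E}_{p_t}\|X_t\|_2^2$, which is controlled through $\mathbb{E}_{p_t}\|X_t\|_2^2 = e^{-2t}\,\mathbb{E}\|X_0\|_2^2 + (1-e^{-2t})d$ (and by $d$ under the data normalization).

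The real work is the fourth term, $\mathbb{E}_{p_t}\|\nabla\Delta\log p_t\|_2^2$, a third-derivative quantity that the Hessian hypothesis does not control directly; I expect this to be the main obstacle. I would exploit the Gaussian-smoothing structure: differentiating the Tweedie identity $\nabla^2\log p_t(\vx) = (1-e^{-2t})^{-2}e^{-2t}\,\mathrm{Cov}(X_0\mid X_t=\vx) - (1-e^{-2t})^{-1}\mI_d$ once more expresses $\nabla^3\log p_t$ (hence its contraction $\nabla\Delta\log p_t$) through the conditional third cumulant of $X_0$ given $X_t$; the a priori bound $\|\nabla^2\log p_t\|_{\mathrm{op}}\le L$ pins down that conditional covariance in operator norm, and, together with the Gaussian tilt defining the conditional law, supplies the higher-order control needed to bound the cumulant, leading to $\mathbb{E}_{p_t}\|\nabla\Delta\log p_t\|_2^2\lesssim L^2 d(L+1/t)$. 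Alternatively one can integrate by parts against $p_t$, trading one of the three derivatives for a score factor and reducing to quantities already controlled. Summing the four bounds and absorbing constants gives $\mathbb{E}\|\partial_t\nabla\log p_t(X_t)\|_2^2\lesssim L^2 d(L+1/t)$.
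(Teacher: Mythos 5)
The paper does not actually prove this lemma: it is imported verbatim as Lemma~1 of \cite{Chen2023ThePF}, and the entire technical content there is precisely the step your proposal leaves as a sketch. Your reduction is the standard one (it matches the route in \cite{Chen2023ThePF}): differentiate the Fokker--Planck equation to get the score evolution, and bound the low-order terms by $\mathbb{E}_{p_t}\Vert\nabla\log p_t\Vert_2^2\lesssim d/(1-e^{-2t})$ together with $\Vert\nabla^2\log p_t\Vert_{\mathrm{op}}\le L$; those estimates are fine. One secondary discrepancy: you bound the \emph{partial} derivative $\partial_t\nabla\log p_t(\vx)$ at fixed $\vx$, which contains the term $(\nabla^2\log p_t)\vx$ and hence forces you to invoke a second-moment normalization ($\mathbb{E}\Vert\vx_t\Vert_2^2\lesssim d$) that is not in the hypotheses. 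The quantity the lemma is used for (see the proof of Theorem~\ref{thm-mapping-error}) is the derivative \emph{along the probability flow ODE trajectory}, for which $\dot\vx_t=-\vx_t-\nabla\log p_t(\vx_t)$ cancels that term and leaves
\begin{equation*}
\frac{\dd{}}{\dd t}\nabla\log p_t(\vx_t)=\bigl(\mI_d+\nabla^2\log p_t(\vx_t)\bigr)\nabla\log p_t(\vx_t)+\nabla\Delta\log p_t(\vx_t),
\end{equation*}
with no data-dependent second moment appearing; you should work with this form.

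The genuine gap is the third-derivative term: you assert $\mathbb{E}_{p_t}\Vert\nabla\Delta\log p_t\Vert_2^2\lesssim L^2d(L+1/t)$ but do not prove it, and neither of your two suggested routes goes through as stated. Differentiating Tweedie's identity expresses $\nabla^3\log p_t$ as $\sigma_t^{-6}$ times the conditional third cumulant of the data given $\vx_t$, but a pointwise operator-norm bound on the conditional covariance does \emph{not} control a third cumulant: the posterior is a Gaussian tilt of an arbitrary (possibly heavy-tailed, non-log-concave) $\pdata$, so no sub-Gaussianity is available pointwise, and getting the stated bound requires the averaged-over-$\vx_t$ integration-by-parts/moment bookkeeping that occupies most of the proof in \cite{Chen2023ThePF}. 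The fallback of "integrating by parts against $p_t$ to trade a derivative for a score factor" also fails to close: writing $\mathbb{E}_{p_t}\Vert\nabla\Delta\log p_t\Vert_2^2=-\mathbb{E}_{p_t}[\Delta\log p_t\,\langle\nabla\log p_t,\nabla\Delta\log p_t\rangle]-\mathbb{E}_{p_t}[\Delta\log p_t\,\Delta^2\log p_t]$ either reintroduces fourth derivatives (not controlled by the hypothesis) or, after Cauchy--Schwarz with the only available pointwise bound $\vert\Delta\log p_t\vert\le dL$, produces $d^2$ or $d^3$ dependence, overshooting the single factor of $d$ in the claimed bound. Until this term is handled (or the lemma is simply cited, as the paper does), the proof is incomplete at its crux.
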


\begin{proof}[\bf Proof of Theorem \ref{thm-mapping-error}]
We divide the left-hand side by Cauchy-Schwarz inequality as follows: let $\vx_t$ be the solution to the probability flow ODE \ref{pfode-exact}. Notice that $\vf^{\text{ex}}(\vx_{t_n},t_n) = \vx_{\delta} = \vf_{\vtheta}(\vx_{t_1},t_1) $
\begin{align}
    &\left(\mathbb{E}_{\vx_{t_n} \sim p_{t_n} } [\Vert \vf_\vtheta(\vx_{t_n},t_n) - \vf^{\text{ex}}(\vx_{t_n},t_n)\Vert_2^2]\right)^{1/2}  \nonumber\\
    =& \left(\mathbb{E}_{\vx_{t_n} \sim p_{t_n} } \left[\left\Vert \sum_{k = 1}^{n-1} (\vf_\vtheta(\vx_{t_{k+1}}, t_{k+1}) -\vf_\vtheta( \vx_{t_k},t_k)) \right\Vert_2^2\right] \right)^{1/2}\nonumber\\
    \le& \sum_{k = 1}^{n-1} \left(\mathbb{E}_{\vx_{t_n} \sim p_{t_n} } \left[\left\Vert   \vf_\vtheta(\vx_{t_{k+1}}, t_{k+1}) -\vf_\vtheta( \vx_{t_k},t_k) \right\Vert_2^2\right]\right)^{1/2}  \nonumber\\
    =& \sum_{k = 1}^{n-1} \left(\mathbb{E}_{\vx_{t_n} \sim p_{t_n} }\left[\left\Vert \vf_\vtheta(\vx_{t_{k+1}}, t_{k+1}) -\vf_\vtheta( \hat\vx_{t_k}^\vphi,t_k) + \vf_\vtheta( \hat\vx_{t_k}^\vphi,t_k) - \vf_\vtheta( \vx_{t_k},t_k) \right \Vert_2^2\right] \right)^{1/2} \nonumber\\
    \le& \sum_{k = 1}^{n-1} \left(\mathbb{E}_{\vx_{t_n} \sim p_{t_n} }\left[\left\Vert \vf_\vtheta(\vx_{t_{k+1}}, t_{k+1}) -\vf_\vtheta( \hat\vx_{t_k}^\vphi,t_k) \right\Vert_2^2\right]\right)^{1/2} \nonumber\\
    & \quad +  \sum_{k = 1}^{n-1} \left(\mathbb{E}_{\vx_{t_n} \sim p_{t_n} }\left[\left\Vert   \vf_\vtheta( \hat\vx_{t_k}^\vphi,t_k) - \vf_\vtheta( \vx_{t_k},t_k)    \right\Vert_2^2\right]\right)^{1/2} \nonumber\\
    :=& E_1 + E_2 \label{ftotalerror}
\end{align}
We can bound $E_1$ by assumption \ref{consistency err}: note that
\begin{align}
    E_1 = &\sum_{k = 1}^{n-1} \left(\mathbb{E}_{\vx_{t_n} \sim p_{t_n} }\left[\left\Vert \vf_\vtheta(\vx_{t_{k+1}}, t_{k+1}) -\vf_\vtheta( \hat\vx_{t_k}^\vphi,t_k) \right\Vert_2^2\right]\right)^{1/2}  \nonumber\\
    = &\sum_{k = 1}^{n-1} \left(\mathbb{E}_{\vx_{t_{k+1}} \sim p_{t_{k+1}} }\left[\left\Vert \vf_\vtheta(\vx_{t_{k+1}}, t_{k+1}) -\vf_\vtheta( \hat\vx_{t_k}^\vphi,t_k) \right\Vert_2^2\right]\right)^{1/2}   \nonumber\\
    \le& \varepsilon_{\text{cm}} \sum_{k=1}^{n-1} h_k = \varepsilon_{\text{cm}} (t_n - t_1), \label{E1}
\end{align}
where the last equality we use the fact that when $\vx_t$ satisfies \eqref{pfode-exact} and $\vx_{t_n} \sim p_{t_n}, \vx_{t_k} \sim p_{t_k}$ for all $k \le N$.

Now we turn to bounding the second term. We notice that by Lipschitz assumption \ref{Lips cond consis}, 
\begin{align}
    E_2 =& \left(\sum_{k = 1}^{n-1} \mathbb{E}_{\vx_{t_n} \sim p_{t_n} }\left[\left\Vert   \vf_\vtheta( \hat\vx_{t_k}^\vphi,t_k) - \vf_\vtheta( \vx_{t_k},t_k)    \right\Vert_2^2 \right]\right)^{1/2} \nonumber\\
    \le& \sum_{k = 1}^{n-1} L_f \left(\mathbb{E}_{\vx_{t_n} \sim p_{t_n} } \left[\Vert \hat\vx_{t_k}^\vphi -\vx_{t_k}  \Vert_2^2\right]\right)^{1/2} \nonumber\\
    =& \sum_{k = 1}^{n-1} L_f \left(\mathbb{E}_{\vx_{t_{k+1}} \sim p_{t_{k+1}} } \left[\Vert \hat\vx_{t_k}^\vphi -\vx_{t_k}  \Vert_2^2\right]\right)^{1/2} \label{E2}
\end{align}

Now let us bound the term $\Vert \hat\vx_{t_k}^\vphi -\vx_{t_k}  \Vert_2^2$. Note that $\hat\vx_{t_k}^\vphi$ is the exponential integrator solution to the ODE \ref{pfode-empirical}, we have
\begin{align}
    \dd \vx_t &= -(\vx_t + \nabla \log p_t (\vx_t)) \dd t,  \nonumber\\
    \dd \hat \vx_t^\vphi &= -(\hat \vx_t^\vphi + \vs( \hat\vx^{\vphi}_{t_{k+1}},t_{k+1}))\dd t.
\end{align}
for $t_k \le t \le t_{k+1}$ with $\hat \vx_{t_{k+1}}^\vphi = \vx_{t_{k+1}}$. Denote $ h_k = t_{k+1} - t_k$, then,
\begin{align}
    \frac{\partial}{\partial_t} \Vert \hat \vx_t^\vphi- \vx_t \Vert_2^2 &= 2 \langle \hat \vx_t^\vphi- \vx_t, \frac{\partial}{\partial_t}(\hat \vx_t^\vphi- \vx_t)\rangle \nonumber\\
    &= 2\left( \Vert \hat \vx_t^\vphi- \vx_t \Vert_2^2 + \langle \hat \vx_t^\vphi- \vx_t, \vs(\vx_{t_{k+1}},t_{k+1}) - \nabla \log p_t(\vx_t)\rangle \right) \nonumber\\
    &\le (2 + \frac{1}{h_k}) \Vert \hat \vx_t^\vphi- \vx_t \Vert_2^2 + h_k \Vert \vs(\vx_{t_{k+1}},t_{k+1}) - \nabla \log p_t(\vx_t) \Vert_2^2.
\end{align}
By Gr\"onwall's inequality,
\begin{align}
    \mathbb{E}_{\vx_{t_{k+1}} \sim p_{t_{k+1}} } &\left[\Vert \hat\vx_{t_k}^\vphi -\vx_{t_k}  \Vert_2^2\right] \nonumber\\
    &\le \exp((2 + \frac{1}{h_k})h_k) \int_{t_k}^{t_{k+1}} h_k \mathbb{E}_{\vx_{t_{k+1}} \sim p_{t_{k+1}}} \left[\Vert \vs(\vx_{t_{k+1}},t_{k+1}) - \nabla \log p_t(\vx_t) \Vert_2^2\right] \dd t \nonumber\\
    &\lesssim h_k \int_{t_k}^{t_{k+1}} \mathbb{E}_{\vx_{t_{k+1}} \sim p_{t_{k+1}}} \left[\Vert \vs(\vx_{t_{k+1}},t_{k+1}) - \nabla \log p_t(\vx_t) \Vert_2^2\right] \dd t.
\end{align}
We split up the error term as
\begin{align}
    \Vert \vs(\vx_{t_{k+1}},t_{k+1})& - \nabla \log p_t(\vx_t) \Vert_2^2 \nonumber\\
    &\lesssim \Vert \vs(\vx_{t_{k+1}},t_{k+1}) - \nabla \log p_{t_{k+1}}(\vx_{t_{k+1}}) \Vert_2^2 + \Vert \nabla \log p_{t_{k+1}}(\vx_{t_{k+1}}) - \nabla \log p_t(\vx_t) \Vert_2^2.
\end{align}
By assumption \ref{score err}, the first term is bounded in expectation by $\varepsilon_{\text{sc}}^2$. By Lemma \ref{scorepurturb} and $t_k \le t \le t_{k+1}$, the second term is bounded by 
\begin{align}\nonumber
    \mathbb{E}_{\vx_{t_{k+1}} \sim p_{t_{k+1}} }\left[\Vert \nabla \log p_{t_{k+1}}(\vx_{t_{k+1}}) - \nabla \log p_t(\vx_t) \Vert_2^2\right]  &= \mathbb{E}_{\vx_{t_{k+1}} \sim p_{t_{k+1}} }\left[\left\Vert \int_t^{t_{k+1}} \frac{\partial}{\partial_u} \nabla \log p_u(\vx_u) \dd u \right\Vert_2^2\right]\\
    &\le (t_{k+1} - t) \int_t^{t_{k+1}} \mathbb{E}[\Vert\frac{\partial}{\partial_u} \nabla \log p_u(\vx_u)\Vert_2^2] \dd u\\
    &\le h_k \int_t^{t_{k+1}} L_s^2 d(L_s + \frac{1}{u}) du \\
    &\lesssim L_s^2 d h_{k}^2 (L_s +\frac{1}{t_k}),
\end{align}
thus
\begin{equation}\nonumber
    \mathbb{E}_{\vx_{t_{k+1}} \sim p_{t_{k+1}} } \left[\Vert \hat\vx_{t_k}^\vphi -\vx_{t_k} \Vert_2^2\right] \le h_k^2(L_s^2 d h_{k}^2 (L_s +\frac{1}{t_k}) + \varepsilon_{\text{sc}}^2),
\end{equation}

\begin{align}
    E_2 &\lesssim L_fL_s^{3/2}d^{1/2} \sum_{k=1}^N h_k^2 +   L_fL_sd^{1/2} \sum_{k=1}^N \frac{h_k^2}{t_k^{1/2}} + \varepsilon_{\text{sc}} \sum_{k=1}^N h_k \nonumber\\
    &\lesssim L_fL_s^{3/2}d^{1/2} h (t_n - t_1) +  L_f L_s d^{1/2} h (t_n - t_1)^{1/2} + L_f\varepsilon_{\text{sc}}(t_n - t_1)\label{E2bound}
\end{align}
as we specially designed $h_k$ for $k \in [\![1, N_1]\!]$ such that $h_k \le \frac{t_{k+1}}{2}$, which implies $\sum_{k=1}^N \frac{h_k}{t_k^{1/2}}$ is a constant-factor approximation of the integral $\int_{t_1}^{t_n} \frac{1}{t^{1/2}} \dd t \lesssim \sqrt{t_n - t_1}$.

Combining  equations \ref{E1} and \ref{E2bound}, we immediately get the result.

\end{proof}

\subsection{Proof of Theorem \ref{thm-W2-error} and Corollary \ref{thm-onestep-error}} \label{Onestep W2 error}
\begin{proof}[\bf Proof of Theorem \ref{thm-W2-error}]
Take a couple of $(\mY, \mZ) \sim \gamma(\vy,\vz)$ where $\gamma \in \Gamma(\mu, p_{t_n})$ is a coupling between  $\mu$ and $p_{t_n}$, that is,
\begin{align}
    \int_{\mathbb{R}^d} \gamma(\vy,\vz)\dd\vz &= \mu(\vy),\\
    \int_{\mathbb{R}^d}\gamma(\vy,\vz)\dd\vy &= p_{t_n}(\vz).
\end{align}

then we have $\vf_\vtheta(\mY, t_n) \sim \vf_{\vtheta,t_n}\sharp\mu$, $ \vf^{\text{ex}}(\mZ,t_n) \sim p_\delta$, thus

\begin{align}
    W_2( \vf_{\vtheta,t_n}\sharp \mu, p_\delta ) &\le \left(\mathbb{E}_{\gamma} \left[\Vert \vf_\vtheta(\mY, t_n) - \vf^{\text{ex}}(\mZ,t_n) \Vert^2_2\right] \right)^{1/2}\nonumber\\
    &\le \left(\mathbb{E}_{\gamma}\left[\Vert \vf_\vtheta(\mY, t_n) - \vf_\vtheta(\mZ,t_n) \Vert^2_2\right]\right)^{1/2} +  \left(\mathbb{E}_{\gamma}\left[\Vert \vf_\vtheta(\mZ, t_n) - \vf^{\text{ex}}(\mZ,t_n) \Vert^2_2\right]\right)^{1/2} \nonumber\\
    &\le L_f \left(\mathbb{E}_{\gamma} \left[\Vert \mY - \mZ \Vert_2^2\right] \right)^{1/2}+ t_n (\varepsilon_{\text{cm}} + L_f \varepsilon_{\text{sc}} + L_f L_s^{\frac{3}{2}} d^{\frac{1}{2}} h) + t_n^{\frac{1}{2}} L_f L_s d^{\frac{1}{2}} h.\label{W2bound}
\end{align}
Note that $\gamma$ can be any coupling between $\mu$ and $p_{t_n}$, this implies
\begin{equation} \nonumber
     W_2( \vf_{\vtheta,t_n}\sharp \mu, p_\delta ) \le  L_f W_2(\mu,p_{t_n})+ t_n (\varepsilon_{\text{cm}} + L_f \varepsilon_{\text{sc}} + L_f L_s^{\frac{3}{2}} d^{\frac{1}{2}} h) + t_n^{\frac{1}{2}} L_f L_s d^{\frac{1}{2}} h
\end{equation}
\end{proof}

\begin{proof}[\bf Proof of Corollary \ref{thm-onestep-error}]
    We first proof that
    \begin{equation} \label{onestep err}
        W_2(\vf_{\vtheta,T}\sharp \mathcal{N}(\vzero,\mI_d),p_\delta) \lesssim (d^{\frac{1}{2}}\vee m_2)L_f e^{-T} + T (\varepsilon_{\text{cm}} + L_f \varepsilon_{\text{sc}} + L_f L_s^{\frac{3}{2}} d^{\frac{1}{2}} h).
    \end{equation}
    This follows directly from the fact that $e^{-T}\vx_0 + \sqrt{1-e^{-2T}}\vxi \sim p_T(\vx),$ if $\vxi \sim \mathcal{N}(\vzero,\mI_d)$, thus
    \begin{equation} \nonumber
        W_2(\mathcal{N}(\vzero,(1-e^{-2T})\mI_d), p_T ) \le \left(\mathbb{E}_{\pdata} [\Vert e^{-T}\vx_0 + (\sqrt{1-e^{-2T}}-1)\xi \Vert_2^2]\right)^{1/2} \lesssim (\sqrt{d}\vee m_2)e^{-T}
    \end{equation}
    and Theorem \ref{thm-W2-error}, where we taking $\mu = \mathcal{N}(\vzero,\mI_d)$.
    
    The corollary then follow from a simple triangular inequality and the fact that
    \begin{align}\nonumber
    W_2(p_\delta, p_0 ) &\le \left(\mathbb{E}_{\pdata} [\Vert (1-e^{-\delta})\vx_0 + (\sqrt{1-e^{-2\delta}})\xi \Vert_2^2]\right)^{1/2} \\
    &\le ((1-e^{-\delta})^2m_2^2 + (1-e^{-2\delta})d )^{1/2}\nonumber\\
    &\lesssim (\sqrt{d}\vee m_2)\sqrt{\delta}.
    \end{align}
\end{proof}

\subsection{Proofs for Corollary \ref{multisteperr} and  \ref{MultistepW2Err} } \label{Multistep proof}
\begin{proof}[\bf Proof of Corollary \ref{multisteperr}]
    Take a couple of $(\mY,\mZ) \sim \gamma(\vy,\vz)$ where $\gamma \in \Gamma(q_{k-1}, p_\delta)$, take $\vxi \sim \mathcal{N}(\vzero,\mI_d),$ then we have
    \begin{align}
        \hat\mY = e^{-(t_{k_n} - \delta)}\mY + \sqrt{1-e^{-2(t_{k_n} - \delta)}}\vxi &\sim \mu_k, \nonumber\\
        \hat\mZ = e^{-(t_{k_n} - \delta)}\mZ + \sqrt{1-e^{-2(t_{k_n} - \delta)}}\vxi &\sim p_{t_{n_k}},
    \end{align}

    The statement follows from the fact that 
    \begin{align}
        W_2( \vf_{\vtheta,t_{n_k}}\sharp \mu_k, p_{\delta} ) &\lesssim L_f W_2(\mu_k,p_{t_{n_k}})+ t_n (\varepsilon_{\text{cm}} + L_f \varepsilon_{\text{sc}} + L_f L_s^{\frac{3}{2}} d^{\frac{1}{2}} h) + t_n^{\frac{1}{2}} L_f L_s d^{\frac{1}{2}} h \nonumber\\
        &\le  L_f (\mathbb{E}_{\gamma} \Vert \hat \mY - \hat \mZ \Vert_2^2 )^{1/2} + t_n (\varepsilon_{\text{cm}} + L_f \varepsilon_{\text{sc}} + L_f L_s^{\frac{3}{2}} d^{\frac{1}{2}} h) + t_n^{\frac{1}{2}} L_f L_s d^{\frac{1}{2}} h \nonumber\\
        &=  L_fe^{-(t_{k_n} - \delta)}(\mathbb{E}_{\gamma} \Vert  \mY - \mZ \Vert_2^2)^{1/2}+ t_n (\varepsilon_{\text{cm}} + L_f \varepsilon_{\text{sc}} + L_f L_s^{\frac{3}{2}} d^{\frac{1}{2}} h) + t_n^{\frac{1}{2}} L_f L_s d^{\frac{1}{2}} h
    \end{align}
    and $\gamma$ can be arbitrary coupling between $q_{k-1}$ and $p_{\delta}$, which means 
    $$ W_2(q_k, p_\delta) \lesssim  L_fe^{-(t_{n_k}-\delta)} W_2(q_{k-1},p_\delta) +  t_{n_k} (\varepsilon_{\text{cm}} + L_f \varepsilon_{\text{sc}} + L_f L_s^{\frac{3}{2}} d^{\frac{1}{2}} h) + t_{n_k}^{\frac{1}{2}} L_f L_s d^{\frac{1}{2}} h.$$
\end{proof}

\begin{proof} [\bf Proof of Corollary \ref{MultistepW2Err}]
For the first statement, fix $n_k \equiv \hat n $, note that $L_s \ge 1, t_{\hat n} \ge \log (2L_f) + \delta$, according to the proof of Corollary \ref{multisteperr}, we may assume $C$ is the constant factor such that 
\begin{equation} \nonumber
    W_2(q_k, p_\delta) \le L_f e^{-(t_{\hat n} - \delta)} W_2(q_{k-1},p_\delta) +  C t_{\hat n} (\varepsilon_{\text{cm}} + L_f \varepsilon_{\text{sc}} + L_f L_s^{\frac{3}{2}} d^{\frac{1}{2}} h).
\end{equation}
where we omit the term with $t_{\hat n}^{\frac{1}{2}}$ as it is controlled by the terms with $t_{\hat n}$.
Denote 
\begin{equation} \nonumber
    D = C(\varepsilon_{\text{cm}} + L_f \varepsilon_{\text{sc}} + L_f L_s^{\frac{3}{2}} d^{\frac{1}{2}} h)\text{ and } \mathcal{E}_k = W_2(q_k,\pdata)
\end{equation}
for short, we have
\begin{equation} \nonumber
    \mathcal{E}_k \le L_f e^{-(t_{\hat n}-\delta)} \mathcal{E}_{k-1} + t_{\hat n}D,
\end{equation}
which means 
\begin{equation} \nonumber
    \mathcal{E}_k-  \frac{t_{\hat n}D}{1-L_f e^{-(t_{\hat n}-\delta)}} \le L_f e^{-(t_{\hat n}-\delta)} \left(\mathcal{E}_{k-1}-   \frac{t_{\hat n}D}{1-L_f e^{-(t_{\hat n}-\delta)}} \right),
\end{equation}
thus,
\begin{enumerate}
    \item if $\mathcal{E}_{k-1}\le  \frac{t_{\hat n}D}{1-L_f e^{-(t_{\hat n}-\delta)}}$,  $\mathcal{E}_K\le  \frac{t_{\hat n}D}{1-L_f e^{-(t_{\hat n}-\delta)}}$ for all $K \ge k$;
    \item if $\mathcal{E}_{k-1} >  \frac{t_{\hat n}D}{1-L_f e^{-(t_{\hat n}-\delta)}}$,
    \begin{equation}
        \mathcal{E}_k  \le    \frac{t_{\hat n}D}{1-L_f e^{-(t_{\hat n}-\delta)}} + \left(L_f e^{-(t_{\hat n}-\delta)}\right)^{k-1}\left(\mathcal{E}_{1}-   \frac{t_{\hat n}D}{1-L_f e^{-(t_{\hat n}-\delta)}} \right).
    \end{equation}
\end{enumerate}
These observation shows that $\mathcal{E}_k$ is exponentially upper-bounded by $ \frac{t_{\hat n}D}{1-L_f e^{-(t_{\hat n}-\delta)}}$, and such an upper bound can be further minimized over $\hat n \in [\![ 1,N ]\!]$: in fact if we take $t_{\hat n} \approx \log(2L_f)+\delta$, then $$\frac{t_{\hat n}D}{1-L_f e^{-(t_{\hat n}-\delta)}} \approx 2\log(2L_f)D = O(\log (L_f) D).$$
together with $L_f e^{-(t_{\hat n} - \delta)} \approx \frac{1}{2}$, $\mathcal{E}_1 = O((d^{\frac{1}{2}}\vee m_2)L_fe^{-T} +T(\varepsilon_{\text{cm}} + L_f \varepsilon_{\text{sc}} + L_f L_s^{\frac{3}{2}} d^{\frac{1}{2}} h))$,
this completes the proof of the first statement.

The second statement follows from the fact that 
\begin{align}\nonumber
    W_2(p_\delta, p_0 ) &\le \left(\mathbb{E}_{\pdata} [\Vert (1-e^{-\delta})\vx_0 + (\sqrt{1-e^{-2\delta}})\xi \Vert_2^2]\right)^{1/2} \\
    &\le ((1-e^{-\delta})^2m_2^2 + (1-e^{-2\delta})d )^{1/2}\nonumber\\
    &\lesssim (\sqrt{d}\vee m_2)\sqrt{\delta}.
    \end{align}
and a simple use of triangular inequality: $W_2(q_k,p_0) \le W_2(q_k,p_\delta) + W_2(p_\delta, p_0)$.
\end{proof}

\subsection{Proof of Corollary \ref{end-to-end TV}} \label{OUproof}
Before we prove the corollary \ref{end-to-end TV}, we first prove the following lemma, which shows that TV error can be bounded after a small time OU regularization.
\begin{lem}\label{OU regularization}
    For any two distribution $p$ and $q$, running the OU process \ref{forwardOU} for $p, q$ individually with time $\tau>0$, the following TV distance bound holds,
    \begin{equation} \nonumber
        \text{TV}(p P_{\text{OU}}^\tau, q P_{\text{OU}}^\tau) \lesssim \frac{1}{\sqrt{\tau}} W_1(p,q) \le \frac{1}{\sqrt{\tau}} W_2(p,q) 
    \end{equation}
\end{lem}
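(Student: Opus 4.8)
The plan is to exploit the fact that the time-$\tau$ OU transition kernel is, pointwise, a Gaussian with a fixed covariance, so that both $pP_{\text{OU}}^\tau$ and $qP_{\text{OU}}^\tau$ are Gaussian mixtures with identical covariance structure; the total-variation distance can then be pushed inside the mixture by convexity and controlled componentwise. Concretely, I would first record that solving \eqref{forwardOU} from a Dirac mass at $\vx$ gives $\delta_{\vx} P_{\text{OU}}^\tau = \mathcal{N}(e^{-\tau}\vx,(1-e^{-2\tau})\mI_d)$, so that for any coupling $\gamma \in \Gamma(p,q)$ one has $pP_{\text{OU}}^\tau = \int \mathcal{N}(e^{-\tau}\vx,(1-e^{-2\tau})\mI_d)\,\gamma(\dd\vx,\dd\vy)$ and $qP_{\text{OU}}^\tau = \int \mathcal{N}(e^{-\tau}\vy,(1-e^{-2\tau})\mI_d)\,\gamma(\dd\vx,\dd\vy)$.

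Next I would use the joint convexity of the total-variation distance (it is an $f$-divergence; equivalently, write $\text{TV}(\mu,\nu) = \sup_{0\le g\le 1}\int g\,\dd(\mu-\nu)$ and use linearity) to get
\[
\text{TV}(pP_{\text{OU}}^\tau, qP_{\text{OU}}^\tau) \le \int \text{TV}\!\big(\mathcal{N}(e^{-\tau}\vx,(1-e^{-2\tau})\mI_d),\,\mathcal{N}(e^{-\tau}\vy,(1-e^{-2\tau})\mI_d)\big)\,\gamma(\dd\vx,\dd\vy).
\]
For two Gaussians with common covariance $\sigma^2\mI_d$ whose means differ by $\vv$, Pinsker's inequality together with the closed form $\KL = \|\vv\|_2^2/(2\sigma^2)$ gives $\text{TV}\le \|\vv\|_2/(2\sigma)$. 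Applying this with $\vv = e^{-\tau}(\vx-\vy)$ and $\sigma^2 = 1-e^{-2\tau}$, the integrand is at most $\tfrac12\,e^{-\tau}\|\vx-\vy\|_2/\sqrt{1-e^{-2\tau}} = \tfrac12\,\|\vx-\vy\|_2/\sqrt{e^{2\tau}-1}$. Choosing $\gamma$ to be a $W_1$-optimal coupling turns the integral of $\|\vx-\vy\|_2$ into $W_1(p,q)$, so $\text{TV}(pP_{\text{OU}}^\tau,qP_{\text{OU}}^\tau)\le \tfrac{1}{2\sqrt{e^{2\tau}-1}}W_1(p,q)$, and the elementary inequality $e^{2\tau}-1\ge 2\tau$ yields the claimed $\tfrac{1}{\sqrt\tau}$ rate up to an absolute constant. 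The last inequality $W_1(p,q)\le W_2(p,q)$ is Cauchy--Schwarz (Jensen) applied inside any coupling.

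The argument is essentially routine: the only steps needing a word of justification are the convexity/mixture bound and the Gaussian TV estimate, and neither is a genuine obstacle. If one prefers to avoid invoking $f$-divergence convexity abstractly, an alternative is to couple the two runs of the OU semigroup by the synchronous (shared-noise) coupling on top of the $W_1$-optimal coupling of $(p,q)$, so that after time $\tau$ the two outputs are $\mathcal{N}(e^{-\tau}\vx,(1-e^{-2\tau})\mI_d)$ and $\mathcal{N}(e^{-\tau}\vy,(1-e^{-2\tau})\mI_d)$ conditionally; the TV distance is then bounded by the expected conditional TV of these two Gaussians, which is exactly the display above. I expect the whole proof to be short.
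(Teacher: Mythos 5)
Your proposal is correct and follows essentially the same route as the paper's proof: write both smoothed measures as Gaussian mixtures over a coupling, push the TV inside by convexity, bound the TV of two equal-covariance Gaussians via Pinsker's inequality to get the factor $\tfrac{1}{2\sqrt{e^{2\tau}-1}}\Vert \vx-\vy\Vert_2$, and then optimize over couplings to obtain $W_1 \le W_2$. The only cosmetic difference is that the paper carries out the convexity step explicitly with the triangle inequality on densities rather than citing $f$-divergence convexity.
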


\begin{proof}
    Denote $\psi_{\sigma^2}(\vy)$ as the density function to the normal distribution $\mathcal{N}(\vzero, \sigma^2 \mI_d)$. We write the $\text{TV}(p P_{\text{OU}}^\tau, q P_{\text{OU}}^\tau)$ into integral form as:
    \begin{align}
        \text{TV}(p P_{\text{OU}}^\tau, q P_{\text{OU}}^\tau) &= \frac{1}{2}\int_{\mathbb{R}^d} \vert (p P_{\text{OU}}^\tau)(\vx) - (q P_{\text{OU}}^\tau)(\vx)\vert \dd \vx \nonumber\\
        &=  \frac{1}{2}\int_{\mathbb{R}^d} \left\vert \int_{\mathbb{R}^d} p(\vy) \psi_{1-e^{-2\tau}} (\vx - e^{-\tau}\vy) \dd \vy - \int_{\mathbb{R}^d} q(\vz) \psi_{1-e^{-2\tau}} (\vx - e^{-\tau}\vz) \dd \vz   \right\vert \dd \vx. 
    \end{align}
    Take a coupling $\gamma \in \Gamma(p,q)$, then 
    \begin{align}
        \int_{\mathbb{R}^d} \gamma(\vy, \vz) \dd \vz =  p(\vy),\nonumber\\
        \int_{\mathbb{R}^d} \gamma(\vy, \vz) \dd \vy =  q(\vz),
    \end{align}
    we have
    \begin{align}
        \text{TV}(p P_{\text{OU}}^\tau, q P_{\text{OU}}^\tau) &= \frac{1}{2} \int_{\mathbb{R}^d} \left\vert \int_{\mathbb{R}^{\dd \times d}} \gamma(\vy,\vz) [\psi_{1-e^{-2\tau}} (\vx - e^{-\tau}\vy) - \psi_{1-e^{-2\tau}} (\vx - e^{-\tau}\vz)] \dd \vy \dd \vz    \right\vert \dd \vx.  \nonumber\\
        &\le \frac{1}{2}\int_{\mathbb{R}^d}\int_{\mathbb{R}^{d\times d}} \gamma(\vy,\vz) \left\vert \psi_{1-e^{-2\tau}} (\vx - e^{-\tau}\vy) - \psi_{1-e^{-2\tau}} (\vx - e^{-\tau}\vz) \right\vert \dd \vy \dd \vz \dd \vx \nonumber\\
        &= \int_{\mathbb{R}^{d\times d}}\gamma(\vy,\vz)  \left( \frac{1}{2}\int_{\mathbb{R}^d} \left\vert \psi_{1-e^{-2\tau}} (\vx - e^{-\tau}\vy) - \psi_{1-e^{-2\tau}} (\vx - e^{-\tau}\vz) \right\vert \dd \vx \right) \dd \vy \dd \vz \nonumber\\
        &= \int_{\mathbb{R}^{d\times d}}\gamma(\vy,\vz)  \text{TV}(\psi_{1-e^{-2\tau}}(\cdot - e^{-\tau}\vy),\psi_{1-e^{-2\tau}}(\cdot - e^{-\tau}\vz) )  \dd \vy \dd \vz \nonumber\\
        &\le \int_{\mathbb{R}^{d\times d}}\gamma(\vy,\vz)  \sqrt{\frac{1}{2}\text{KL}(\psi_{1-e^{-2\tau}}(\cdot - e^{-\tau}\vy)\Vert\psi_{1-e^{-2\tau}}(\cdot - e^{-\tau}\vz) ) } \dd \vy \dd \vz \nonumber\\
        &=  \int_{\mathbb{R}^{d\times d}}\gamma(\vy,\vz)  \frac{1}{2}\sqrt{\frac{e^{-2\tau}}{1-e^{-2\tau}}  \Vert \vy - \vz \Vert_2^2 } \dd\vy\dd\vz \nonumber\\
        &=  \frac{1}{2\sqrt{e^{2\tau}-1}}\int_{\mathbb{R}^{d\times d}}\gamma(\vy,\vz) \Vert \vy - \vz \Vert_2 \dd\vy\dd\vz
    \end{align}
    Noting $\frac{1}{2\sqrt{e^{2\tau}-1}} \le \frac{1}{2\sqrt{2 \tau}}$, and taking $\gamma$ over all coupling $\Gamma(p,q)$, we have 
    \begin{align} \nonumber
        \text{TV}(p P_{\text{OU}}^\tau, q P_{\text{OU}}^\tau) \lesssim \frac{1}{\sqrt{\tau}} W_1(p,q) \le \frac{1}{\sqrt{\tau}} W_2(p,q)
    \end{align}
\end{proof}

\begin{proof}[\bf Proof of Corollary \ref{end-to-end TV}]
    According to the triangular inequality, Lemma \ref{OU regularization} and \eqref{onestep err to zero},
    \begin{align}
        \text{TV}(q_1 P_{\text{OU}}^\delta, \pdata) &\le \text{TV}(q_1 P_{\text{OU}}^\delta, p_\delta P_{\text{OU}}^\delta) + \text{TV}(p_\delta P_{\text{OU}}^\delta, \pdata) \nonumber\\
        &\lesssim \frac{1}{\sqrt{\delta}} W_2(q_1,p_\delta) + \text{TV}(p_{2\delta}, \pdata) \nonumber\\
        &\lesssim\frac{1}{\sqrt{\delta}} \left((d^{\frac{1}{2}} \vee m_2) L_f e^{-T} + T (\varepsilon_{\text{cm}} + L_f \varepsilon_{\text{sc}} + L_f L_s^{\frac{3}{2}} d^{\frac{1}{2}} h)\right) + \text{TV}(p_{2\delta}, \pdata).
    \end{align}
    Note that if we take $\delta \asymp \frac{\varepsilon^2}{L_s^2(d\vee m_2^2)}$, then by Lemma 6.4, \cite{Lee2022ConvergenceOS}, $\text{TV}(p_{2\delta}, \pdata) \le \varepsilon$, this concludes that 
    \begin{equation} \nonumber
        \text{TV}(q_1 P_{\text{OU}}^\delta, \pdata) \lesssim \frac{L_s  (d^{\frac{1}{2}} \vee m_2)}{\varepsilon} [(\log L_f + \frac{T}{2^k})   (\varepsilon_{\text{cm}} + L_f \varepsilon_{\text{sc}} + L_f L_s^{\frac{3}{2}} d^{\frac{1}{2}} h)+ \frac{(d^{\frac{1}{2}}\vee m_2)L_f}{2^ke^T}] + \varepsilon. 
    \end{equation}
    Similarly for $q_k$, if we take $\delta \asymp \frac{\varepsilon^2}{L_s^2(d\vee m_2^2)}$
\begin{align}
        &\text{TV}(q_k P_{\text{OU}}^\delta, \pdata) \\
        \le &\text{TV}(q_k P_{\text{OU}}^\delta, p_\delta P_{\text{OU}}^\delta) + \text{TV}(p_\delta P_{\text{OU}}^\delta, \pdata) \nonumber\\
        \lesssim &\frac{L_s  (d^{\frac{1}{2}} \vee m_2)}{\varepsilon} [(\log L_f + \frac{T}{2^k})   (\varepsilon_{\text{cm}} + L_f \varepsilon_{\text{sc}} + L_f L_s^{\frac{3}{2}} d^{\frac{1}{2}} h)+ \frac{(d^{\frac{1}{2}}\vee m_2)L_f}{2^ke^T}] + \varepsilon. 
\end{align}
\end{proof}

\subsection{Proof of Corollary \ref{TV-Langevin-corrector}} \label{ULMCproof}
We first introduce the following lemma which originally comes from Theorem 5 in \cite{Chen2023ThePF}.
\begin{lem}[Underdamped corrector, Theorem 5 in \cite{Chen2023ThePF}]\label{ULMClemma}
    Denote the Markov kernel $\hat P_{\text{ULMC}}$ to be defined by the equation \ref{ULMC}. Suppose $\gamma \asymp L_s$, $\nabla U$ is $L_s$-Lipschitz, $p \propto \exp(-U)$, and $\mathbb{E}_{\vx \sim q} \Vert \vs(\vx) - (-\nabla U(\vx)) \Vert_2^2 \le \varepsilon_{\text{sc}}^2$. Denote $\vp := p \otimes \mathcal{N}(\vzero,\mI_d)$, and $\vq := q \otimes \mathcal{N}(\vzero,\mI_d)$. For any $T_{\text{corr}} := N\tau \lesssim 1/{\sqrt{L_s}}$,
    \begin{equation} \nonumber
        \text{TV}(\vq\hat P_{\text{ULMC}}^N,\vp) \lesssim \frac{W_2(q,p)}{L_s^{1/4}T_{\text{corr}}^{3/2}} + \frac{\varepsilon_{\text{sc}} T^{1/2}_{\text{corr}}}{L_s^{1/4}} + L_s^{3/4}T_{\text{corr}}^{1/2}d^{1/2}\tau.
    \end{equation}
    In particular, for $T_{\text{corr}} \asymp 1/\sqrt{L_s},$
    \begin{equation} \nonumber
        \text{TV}(\vq\hat P_{\text{ULMC}}^N,\vp)  \lesssim \sqrt{L_s}W_2(q,p) + \varepsilon_{\text{sc}} /\sqrt{L_s} + \sqrt{L_sd} \tau.
    \end{equation}
\end{lem}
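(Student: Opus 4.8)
The plan is to compare the discretized, score-driven chain $\hat P_{\text{ULMC}}$ against the ideal continuous-time underdamped (kinetic) Langevin diffusion run with the exact drift $-\nabla U$, for which $\vp = p\otimes\mathcal{N}(\vzero,\mI_d)$ is the invariant law, and to split the total variation into a \emph{perturbation} part (discretizing the drift and replacing $-\nabla U$ by $\vs$) and a \emph{regularization} part (the short-time hypoelliptic smoothing that converts the initial $W_2$ gap into a TV gap). Concretely, let $(\vz_t,\vv_t)$ solve $\dd\vz_t=\vv_t\,\dd t$, $\dd\vv_t=(-\nabla U(\vz_t)-\gamma\vv_t)\,\dd t+\sqrt{2\gamma}\,\dd\vw_t$ with $(\vz_0,\vv_0)\sim\vq$, driven by the same Brownian motion as the discrete chain, and let $(P_t)_{t\ge0}$ be its semigroup, so $\vp P_{T_{\text{corr}}}=\vp$ with $T_{\text{corr}}=N\tau$. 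By the triangle inequality,
\begin{align*}
\text{TV}(\vq\hat P_{\text{ULMC}}^N,\vp)\ \le\ \text{TV}\big(\text{Law}(\hat\vz_{N\tau},\hat\vv_{N\tau}),\,\text{Law}(\vz_{T_{\text{corr}}},\vv_{T_{\text{corr}}})\big)\ +\ \text{TV}\big(\vq P_{T_{\text{corr}}},\,\vp\big).
\end{align*}

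For the first term I would use Girsanov's theorem on the path space $C([0,T_{\text{corr}}];\mathbb{R}^{2d})$: the discrete and continuous path laws differ only through the velocity drift, so their relative entropy is $\lesssim\gamma^{-1}\int_0^{T_{\text{corr}}}\mathbb{E}\big\|\vs(\hat\vz_{\lfloor t/\tau\rfloor\tau})+\nabla U(\hat\vz_t)\big\|_2^2\,\dd t$. I split the integrand into a score-error piece $\lesssim\|\vs(\hat\vz_{k\tau})+\nabla U(\hat\vz_{k\tau})\|_2^2$ and a discretization piece $\lesssim L_s^2\|\hat\vz_{k\tau}-\hat\vz_t\|_2^2$. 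The first is bounded by $\varepsilon_{\text{sc}}^2$ after a short-horizon a priori estimate showing the law of $\hat\vz_{k\tau}$ stays comparable to $q$, so that the hypothesis $\mathbb{E}_{\vx\sim q}\|\vs(\vx)+\nabla U(\vx)\|_2^2\le\varepsilon_{\text{sc}}^2$ applies; the second is $\lesssim L_s^2(t-k\tau)^2\,\mathbb{E}\|\hat\vv\|_2^2+L_s^2\gamma(t-k\tau)^3\lesssim L_s^2 d\,\tau^2$ over a step, using that $\vq$ has an $\mathcal{N}(\vzero,\mI_d)$ velocity marginal and short-time velocity moments do not blow up. Summing over the $N$ steps and applying Pinsker together with the data-processing inequality (path law $\to$ time-$T_{\text{corr}}$ marginal) yields a bound of the form $L_s^{-1/4}\varepsilon_{\text{sc}}T_{\text{corr}}^{1/2}+L_s^{3/4}d^{1/2}\tau\,T_{\text{corr}}^{1/2}$, where the exact powers of $L_s$ emerge after carrying the $\gamma\asymp L_s$ dependence through — most cleanly by a time rescaling that normalizes the friction to a constant.

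For the second (regularization) term I would pass to a coupling: write $\text{TV}(\vq P_{T_{\text{corr}}},\vp)\le\int\text{TV}\big(P_{T_{\text{corr}}}((\vx_0,\vv_0),\cdot),P_{T_{\text{corr}}}((\vy_0,\vv_0),\cdot)\big)\,\dd\pi$, where $\pi$ is an optimal $W_2$-coupling of $q$ and $p$ lifted with a shared velocity marginal $\mathcal{N}(\vzero,\mI_d)$, so the joint initial displacement is just $\|\vx_0-\vy_0\|$ and $W_2(\vq,\vp)=W_2(q,p)$. The key ingredient is the hypoelliptic smoothing estimate for the continuous kinetic Langevin flow: two copies started from $(\vx_0,\vv_0)$ and $(\vy_0,\vv_0)$ produce position laws that agree up to a near-deterministic shift of order $\|\vx_0-\vy_0\|$ (the nonlinear $\nabla U$-driven discrepancy being controlled by the synchronous-coupling $W_2$-Lipschitzness of the flow on $[0,T_{\text{corr}}]$) inside a distribution whose irreducible Gaussian component — the contribution of $\int_0^t\!\!\int_0^s\sqrt{2\gamma}\,\dd\vw_u\,\dd s$ — has covariance of order $\gamma t^3\mI_d$; hence the pointwise TV is $\lesssim\|\vx_0-\vy_0\|/(L_s^{1/4}T_{\text{corr}}^{3/2})$ after inserting the friction scaling. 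Integrating against $\pi$ and using $\mathbb{E}_\pi\|\vx_0-\vy_0\|\le W_2(q,p)$ gives $\text{TV}(\vq P_{T_{\text{corr}}},\vp)\lesssim W_2(q,p)/(L_s^{1/4}T_{\text{corr}}^{3/2})$. Adding the two contributions gives the general bound; substituting $T_{\text{corr}}\asymp L_s^{-1/2}$ gives the displayed special case.

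The main obstacle is the regularization step: pinning down the sharp order of the irreducible Gaussian spread of the underdamped position under friction $\gamma\asymp L_s$, and in particular verifying that the nonlinear $\nabla U$-forced part of the trajectory does not destroy this Gaussian lower bound on the short horizon $T_{\text{corr}}\lesssim L_s^{-1/2}$ (this is where the synchronous $W_2$-coupling bound for the flow is needed as an input), together with keeping all powers of $L_s$ straight throughout — for which a time-change to a constant-friction normalization is the cleanest device. The Girsanov computation, the velocity moment estimates, Pinsker's inequality and the triangle/data-processing steps are all routine once that smoothing estimate is in hand.
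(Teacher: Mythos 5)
Two preliminary facts about the comparison: the paper itself gives no proof of this lemma — Section \ref{ULMCproof} simply imports it as Theorem 5 of \cite{Chen2023ThePF} and applies it — so your proposal is really a reconstruction of the cited result rather than something to be checked against an in-paper argument. Your overall architecture (compare the discretized, score-driven chain to the exact continuous kinetic Langevin flow started from the same $\vq$ via a Girsanov/path-measure computation plus Pinsker and data processing, then convert the initial $W_2(q,p)$ gap into TV through a short-time hypoelliptic regularization estimate, using that $\vp$ is stationary) is indeed the strategy behind the cited theorem, and the powers of $L_s$, $T_{\text{corr}}$, $d$, $\tau$ you sketch are consistent with the stated bound.

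As a standalone proof, however, two load-bearing steps are genuinely missing. (i) In the Girsanov term you must evaluate $\mathbb{E}\Vert \vs(\hat\vz_{k\tau})+\nabla U(\hat\vz_{k\tau})\Vert_2^2$ under the law of the $k$-th iterate, and you reduce this to the hypothesis $\mathbb{E}_{\vx\sim q}\Vert\vs(\vx)+\nabla U(\vx)\Vert_2^2\le\varepsilon_{\text{sc}}^2$ by asserting that this law ``stays comparable to $q$.'' That transfer is the crux, and a $W_2$- or moment-type closeness argument cannot deliver it: $\vs$ is only $L^2$-accurate under $q$, with no regularity assumed, so bounding its error under a nearby but different measure requires density-ratio (change-of-measure) control that is neither assumed nor generally true for the algorithm's iterates; the same issue re-enters your velocity-moment estimate through $\mathbb{E}\Vert\vs(\hat\vz_{k\tau})\Vert_2^2$. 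A correct argument has to be organized so that the drift-error expectation falls on a measure under which the error is actually controlled — this is precisely the delicate point such corrector proofs are built around — and cannot be patched by ``comparability.'' (ii) The regularization estimate, i.e.\ that the two evolved laws satisfy a TV bound of order $\Vert\vx_0-\vy_0\Vert/(\sqrt{\gamma}\,t^{3/2})$ uniformly in the presence of the nonlinear drift on the horizon $t\lesssim 1/\sqrt{L_s}$, is the technical heart of the lemma, and you assume it rather than prove it: the ``irreducible Gaussian component of covariance $\gamma t^3$'' heuristic does not by itself yield a TV comparison once $\nabla U$ feeds the position discrepancy back into the drift; the known proofs run an explicit shift-coupling/Girsanov steering argument. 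Note also that the lemma (and your own triangle inequality against the path-space term) requires TV between the \emph{joint} $(\vz,\vv)$ laws, whereas your smoothing discussion only treats the position marginal, and a bound on a marginal TV does not upgrade to the joint. So the outline is the right one, but the two key estimates it leans on are not established by the proposal.
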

\begin{proof}[\bf Proof of Corollary \ref{TV-Langevin-corrector}]
    Given any distribution $q$ on $\mathbb{R}^d$, we write $q\hat P_{\text{ULMC}}$ to denote the projection onto the $\vz-$coordinates of $\vq \hat P_{\text{ULMC}}$. Now according to \ref{ULMClemma}, we have
    \begin{align}
        \text{TV}(q_1\hat P_{\text{ULMC}}^N, \pdata) &\le \text{TV}(q_1\hat P_{\text{ULMC}}^N, p_\delta) + \text{TV}(p_\delta, \pdata) \nonumber\\
        &\lesssim \sqrt{L_s} W_2(q_1,p_\delta) + \varepsilon_{\text{sc}} /\sqrt{L_s} + \sqrt{L_sd} \tau + \text{TV}(p_\delta, \pdata) .
    \end{align}
    According to \eqref{onestep err to zero}, we have
    \begin{align}
        \text{TV}&(q_1\hat P_{\text{ULMC}}^N,\pdata) \nonumber\\
        &\lesssim (d^{\frac{1}{2}}\vee m_2)L_fL_s^{\frac{1}{2}} e^{-T} + TL_s^{\frac{1}{2}} ( \varepsilon_{\text{cm}} + L_f\varepsilon_{\text{sc}} + L_f L_s^{\frac{3}{2}} d^{\frac{1}{2}} h ) + L_s^{-\frac{1}{2}} \varepsilon_{\text{sc}} + L_s^{\frac{1}{2}}d^{\frac{1}{2}}\tau + \varepsilon.
    \end{align}
    Similarly, we have 
    \begin{align}
        \text{TV}&(q_k\hat P_{\text{ULMC}}^N,\pdata) \nonumber\\
        &\lesssim (\log( L_f) + \frac{T}{2^k}) L_s^{\frac{1}{2}}(\varepsilon_{\text{cm}} + L_f\varepsilon_{\text{sc}} + L_f L_s^{\frac{3}{2}}d^{\frac{1}{2}}h) + \frac{(d^{\frac{1}{2}}\vee m_2)L_s^{\frac{1}{2}}L_f}{2^{k}e^T} + L_s^{-\frac{1}{2}} \varepsilon_{\text{sc}} + L_s^{\frac{1}{2}}d^{\frac{1}{2}}\tau+ \varepsilon.
    \end{align}
\end{proof}

\subsection{Proof of Lemma \ref{Hessian Bound Lemma} and Corollary \ref{Bounded pdata err}} \label{bound support proof}

\begin{proof}[\bf Proof of Lemma \ref{Hessian Bound Lemma}]
    Let $\mu_{\vx, \sigma^2}$ denote the density $\mu( \dd \vu)$ weighted with the gaussian $\psi_{\sigma^2}(\vu - \vx) \sim e^{-\frac{\Vert \vx - \vu\Vert_2^2}{2\sigma^2}},$ that is, 
    \begin{equation}\nonumber
        \mu_{\vx, \sigma^2}(\dd\vu) = \frac{e^{-\frac{\Vert \vx - \vu\Vert_2^2}{2\sigma^2} }\mu(\dd\vu)}{\int_{\mathbb{R}^d} e^{-\frac{\Vert \vx - \tilde\vu\Vert_2^2}{2\sigma^2}} \mu(\dd\tilde\vu)}.
    \end{equation}
    Note that
    \begin{equation}\nonumber
        \nabla \log (\mu \ast \psi_{\sigma^2}(\vx)) = \frac{\nabla\int_{ \mathbb{R}^d} e^{-\frac{\Vert \vx - \vu\Vert_2^2}{2\sigma^2} }\mu(\dd\vu)}{\int_{\mathbb{R}^d} e^{-\frac{\Vert \vx - \vu\Vert_2^2}{2\sigma^2}} \mu(\dd\vu)} = \frac{\int_{ \mathbb{R}^d} -\frac{\vx - \vu}{\sigma^2}e^{-\frac{\Vert \vx - \vu\Vert_2^2}{2\sigma^2} }\mu(\dd\vu)}{\int_{\mathbb{R}^d} e^{-\frac{\Vert \vx - \vu\Vert_2^2}{2\sigma^2}} \mu(\dd\vu)} = -\frac{1}{\sigma^2} \mathbb{E}_{\mu_{\vx,\sigma^2}}[\vx - \vu],
    \end{equation}
    
    \begin{align}
        \nabla^2 \log (\mu \ast \psi_{\sigma^2}(\vx)) &=   \frac{\nabla \otimes \int_{ \mathbb{R}^d} -\frac{\vx - \vu}{\sigma^2}e^{-\frac{\Vert \vx - \vu\Vert_2^2}{2\sigma^2} }\mu(\dd\vu)}{\int_{\mathbb{R}^d} e^{-\frac{\Vert \vx - \vu\Vert_2^2}{2\sigma^2}} \mu(\dd\vu)} - \left(\frac{\int_{ \mathbb{R}^d} -\frac{\vx - \vu}{\sigma^2}e^{-\frac{\Vert \vx - \vu\Vert_2^2}{2\sigma^2} }\mu(\dd\vu)}{\int_{\mathbb{R}^d} e^{-\frac{\Vert \vx - \vu\Vert_2^2}{2\sigma^2}} \mu(\dd\vu)}\right)^{\otimes2} \nonumber\\
        &= -\frac{1}{\sigma^2}\mI_d +  \frac{\int_{ \mathbb{R}^d} \left(\frac{\vx - \vu}{\sigma^2}\right)^{\otimes 2}e^{-\frac{\Vert \vx - \vu\Vert_2^2}{2\sigma^2} }\mu(\dd\vu)}{\int_{\mathbb{R}^d} e^{-\frac{\Vert \vx - \vu\Vert_2^2}{2\sigma^2}} \mu(\dd\vu)} - \left(\frac{\int_{ \mathbb{R}^d} -\frac{\vx - \vu}{\sigma^2}e^{-\frac{\Vert \vx - \vu\Vert_2^2}{2\sigma^2} }\mu(\dd\vu)}{\int_{\mathbb{R}^d} e^{-\frac{\Vert \vx - \vu\Vert_2^2}{2\sigma^2}} \mu(\dd\vu)}\right)^{\otimes 2} \nonumber\\
        &= \frac{1}{\sigma^4} \int_{\mathbb{R}^d} (\vu -\mathbb{E}_{\mu_{\vx,\sigma^2}}[\vu] ) \otimes (\vu -\mathbb{E}_{\mu_{\vx,\sigma^2}}[\vu]) \mu_{\vx, \sigma^2}(\dd\vu) - \frac{1}{\sigma^2} \mI_d
    \end{align}
    where for any vector $\vy \in \mathbb{R}^d$, we denote $\vy^{\otimes 2} = \vy \otimes \vy = \vy \vy^T \in \mathbb{R}^{d\times d}$ as a matrix and denote $\nabla \otimes \vf(\vx)$ as the Jacobbian matrix $[\frac{\partial f_i(\vx)}{\partial x_j}]_{1\le i,j \le d}$
    
    Note that if $\mu$ is bounded on a set of radius $R$, so as $\mu_{\vx, \sigma^2}$, then the covariance of $\mu_{\vx, \sigma^2}$ is bounded by $R^2$ in operator norm.
    
    Now we take $\mu(\vu) = e^{dt}p_0(e^t \vu)$, which is bounded on a set of radius $e^{-t}R$. Take $\sigma^2 = 1-e^{-2t}$, we have $\mu \ast \psi_{\sigma^2}(\vx) = p_t(\vx)$, thus
    
    $$\Vert\nabla^2 \log p_t(\vx) \Vert_{\text{op}} \le \frac{e^{-2t}R^2}{(1-e^{-2t})^2}  + \frac{1}{1-e^{-2t}}.$$
\end{proof}

\begin{proof}[\bf Proof of Corollary \ref{Bounded pdata err}]
    Note that $\text{supp } \pdata \subseteq B(\vzero,R)$ implies $\mathbb{E}_{\vx_0 \sim \pdata} [\Vert \vx_0 \Vert_2^2] \lesssim R^2$. Taking $\delta \asymp \frac{\varepsilon^2}{R^2\vee d}$, according to Lemma \ref{Hessian Bound Lemma}, we have
    $$\Vert\nabla^2 \log p_t(\vx) \Vert_{\text{op}} \le \frac{e^{-2t}R^2}{(1-e^{-2t})^2}  + \frac{1}{1-e^{-2t}} \lesssim \frac{1}{t}\vee \frac{R^2}{t^2} \lesssim \frac{R^2(R^2\vee d)^2}{\varepsilon^4}, \forall t \ge \delta $$
    and thus $\nabla \log p_t(\vx)$ satisfies Assumption \ref{Lips cond score} with $L_s  \asymp \frac{R^2(R^2\vee d)^2}{\varepsilon^4}$ for $t \ge \delta$. Now according to Corollary \ref{thm-onestep-error}, we have that 
    \begin{equation}\nonumber
        W_2(\vf_{\vtheta,T}\sharp \mathcal{N}(\vzero,\mI_d),\pdata) \lesssim (d^{\frac{1}{2}} \vee R) L_f e^{-T} + T (\varepsilon_{\text{cm}} + L_f \varepsilon_{\text{sc}} + \frac{L_f d^{\frac{1}{2}} R^3(R^2\vee d)^3h}{\varepsilon^6})  +\varepsilon,
    \end{equation}
    thus if we take $h =O(\frac{\varepsilon^7}{d^{1/2}R^3(R^6\vee d^3)L_f T})$,  $\varepsilon_{\text{cm}} = O(\frac{\varepsilon}{T}), \varepsilon_{\text{sc}} = O(\frac{\varepsilon}{L_f T})$, $T = O(\log (\frac{L_f (\sqrt{d} \vee R)}{\varepsilon}))$, we can guarantee $W_2(\vf_{\vtheta,T}\sharp \mathcal{N}(\vzero,\mI_d),\pdata) \lesssim \varepsilon$.
    
    Similarly, according to Corollary \ref{MultistepW2Err}, we have
    $$ W_2(q_k, \pdata )\lesssim  (\log( L_f) + 2^{-k}T) (\varepsilon_{\text{cm}} + L_f\varepsilon_{\text{sc}} + \frac{L_f d^{\frac{1}{2}} R^3(R^2\vee d)^3h}{\varepsilon^6}) + 2^{-k}(d^{\frac{1}{2}}\vee m_2)L_fe^{-T}+  \varepsilon,$$
    thus if we take $h =O(\frac{\varepsilon^7}{d^{1/2}R^3(R^6\vee d^3)L_f \log(L_f)})$,  $\varepsilon_{\text{cm}} = O(\frac{\varepsilon}{\log(L_f)}), \varepsilon_{\text{sc}} = O(\frac{\varepsilon}{L_f \log(L_f)})$, $k = O(\log(T \vee (\frac{(d\vee m_2^2)L_fL_s}{\varepsilon})))$, we can guarantee $W_2(q_k, \pdata )\lesssim \varepsilon.$
\end{proof}

\section{Additional proofs} \label{additional proof}

\begin{lem}\label{creatingscore} Assuming \ref{score err}  and \ref{consistency err}, then
$\hat\vs_\vtheta(\vx) := \frac{\vf_\vtheta(\vx, t_2) - e^{h_1}\vx}{e^{h_1} - 1}$ is a score model approximating $\nabla \log p_{t_2}(\vx)$ with $L^2$ error $\sqrt{\varepsilon^2_{\text{cm}} + \varepsilon^2_{\text{sc}}}$.
\end{lem}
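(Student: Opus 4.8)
The plan is to back out a score surrogate from a single exponential-integrator step taken from $t_2$ down to $t_1=\delta$, and then compare that surrogate to the true score $\nabla\log p_{t_2}$ by a triangle inequality. First I would record the exponential-integrator identity \eqref{exponential integrator} for the single step $t_2\to t_1$, namely $\hat\vx^{\vphi}_{t_1}=e^{h_1}\vx_{t_2}+(e^{h_1}-1)\vs_\vphi(\vx_{t_2},t_2)$ with $h_1=t_2-t_1=t_2-\delta$, and observe that solving for $\vs_\vphi(\vx_{t_2},t_2)=(e^{h_1}-1)^{-1}(\hat\vx^{\vphi}_{t_1}-e^{h_1}\vx_{t_2})$ yields precisely the defining formula of $\hat\vs_\vtheta$ with $\hat\vx^{\vphi}_{t_1}$ replaced by $\vf_\vtheta(\vx_{t_2},t_2)$. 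Next I would use the boundary condition built into the skip-connection parametrization, $c_{\text{skip}}(\delta)=1$ and $c_{\text{out}}(\delta)=0$, which forces $\vf_\vtheta(\cdot,t_1)=\vf_\vtheta(\cdot,\delta)=\mathrm{id}$, so in particular $\vf_\vtheta(\hat\vx^{\vphi}_{t_1},t_1)=\hat\vx^{\vphi}_{t_1}$.

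The core step is then Assumption \ref{consistency err} at $n=1$ (the step is legitimate since Assumption \ref{Timestep schedule} gives $h_1\le\delta$), read with the expectation over $\vx_{t_2}\sim p_{t_2}$ exactly as in the proof of Theorem \ref{thm-mapping-error}: it states $\mathbb{E}_{\vx_{t_2}\sim p_{t_2}}[\Vert\vf_\vtheta(\vx_{t_2},t_2)-\vf_\vtheta(\hat\vx^{\vphi}_{t_1},t_1)\Vert_2^2]\le\varepsilon_{\text{cm}}^2 h_1^2$, which by the previous paragraph is $\mathbb{E}_{\vx_{t_2}\sim p_{t_2}}[\Vert\vf_\vtheta(\vx_{t_2},t_2)-\hat\vx^{\vphi}_{t_1}\Vert_2^2]\le\varepsilon_{\text{cm}}^2 h_1^2$. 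Dividing through by $(e^{h_1}-1)^2$ and using the elementary bound $e^{h_1}-1\ge h_1>0$ converts this into $\mathbb{E}_{\vx_{t_2}\sim p_{t_2}}[\Vert\hat\vs_\vtheta(\vx_{t_2})-\vs_\vphi(\vx_{t_2},t_2)\Vert_2^2]\le\varepsilon_{\text{cm}}^2$.

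Finally I would combine this with the score-estimation bound (Assumption \ref{score err} at $n=2$) via the triangle inequality in $L^2(p_{t_2})$:
\[
\left(\mathbb{E}_{\vx_{t_2}\sim p_{t_2}}\big[\Vert\hat\vs_\vtheta(\vx_{t_2})-\nabla\log p_{t_2}(\vx_{t_2})\Vert_2^2\big]\right)^{1/2}\le\varepsilon_{\text{cm}}+\varepsilon_{\text{sc}},
\]
which is of the claimed order $\sqrt{\varepsilon_{\text{cm}}^2+\varepsilon_{\text{sc}}^2}$. There is essentially no obstacle here: this is a one-step bookkeeping lemma. The only points requiring a little care are (i) aligning the expectation conventions with the rest of the paper (the consistency-error assumption is used with the expectation over $\vx_{t_{n+1}}\sim p_{t_{n+1}}$, as in Theorem \ref{thm-mapping-error}), and (ii) the harmless discrepancy between $\varepsilon_{\text{cm}}+\varepsilon_{\text{sc}}$ and $\sqrt{\varepsilon_{\text{cm}}^2+\varepsilon_{\text{sc}}^2}$, which differ by at most a factor $\sqrt2$ and are interchangeable under the paper's $\lesssim$ conventions — one may alternatively keep the two contributions separate to retain the stated constant.
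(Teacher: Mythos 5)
Your proposal is correct and follows essentially the same route as the paper: invoke Assumption \ref{consistency err} at the first step, use the boundary identity $\vf_\vtheta(\hat\vx^{\vphi}_{t_1},t_1)=\hat\vx^{\vphi}_{t_1}$ together with the exponential-integrator formula, divide by $e^{h_1}-1\ge h_1$, and combine with Assumption \ref{score err} by the triangle inequality. Your explicit final step and the remark about the harmless $\sqrt{2}$ gap between $\varepsilon_{\text{cm}}+\varepsilon_{\text{sc}}$ and $\sqrt{\varepsilon_{\text{cm}}^2+\varepsilon_{\text{sc}}^2}$ only make the paper's (terser) argument more complete.
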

\begin{proof}
    Letting $h_1 = t_2 - t_1 = t_2 - \delta$, the consistency loss assumption \ref{consistency err} ensures 
$$\mathbb{E}_{\vx_{t_2}\sim p_{t_2}} \left[\Vert \vf_{\vtheta}(\vx_{t_2}, t_2) - \vf_\vtheta(\hat\vx^\phi_{t_1},t_1)\Vert_2^2\right] \le \varepsilon_{\text{cm}}^2 h_1^2,$$
where we have $ \vf_\vtheta(\hat\vx^\phi_{t_1},t_1) =\hat\vx^\phi_{t_1} = e^{h_1}\vx_{t_2} + (e^{h_1} - 1) \vs_\vphi(\vx_{t_2},t_2) $, thus
$$\mathbb{E}_{\vx_{t_2}\sim p_{t_2}} \left[ \left\Vert \frac{\vf_{\vtheta}(\vx_{t_2}, t_2) - e^{h_1}\vx_{t_2}}{e^{h_1}-1} - \vs_\vphi(\vx_{t_2},t_2)\right\Vert_2^2 \right] \le \varepsilon_{\text{cm}}^2 \frac{h_1^2}{(e^{h_1} -1)^2} \le \varepsilon_{\text{cm}}^2.$$
\end{proof}

\begin{lem}\label{CTform} Under the OU scheduler \ref{OUscheduler}, let $\Delta t:= \max_{1\le n\le N-1} (t_{n+1} - t_n)$. Assume $\vf_{\vtheta^-}$ is twice continuously differentiable with bounded second derivatives, and $\mathbb{E}[\Vert \nabla \log p_{t_{n}}(\vx_{t_n})\Vert_2^2] \le \infty$. Assume the CD objective \ref{distillation_loss} is defined with the exponential integrator \ref{ODEsolver} and exact score model $\vs_\vphi(\vx,t) = \nabla \log p_t(\vx)$, then we can define the CT objective as
\begin{equation} \label{CTobject} 
    \mathcal{L}^N_{\text{CT}}(\vtheta,\vtheta^-):= \mathbb{E}[\Vert \vf_\vtheta(e^{-t_{n+1}}\vx_0 + \sqrt{1-e^{-2t_{n+1}}} \vz, t_{n+1}) - \vf_{\vtheta^-}(e^{-t_{n}}\vx_0 + \frac{1-e^{-(t_{n} + t_{n+1})}}{\sqrt{1-e^{-2t_{n+1}}}}\vz, t_n)\Vert_2^2],
\end{equation}
where $\vx_0 \sim \pdata$, $\vz \sim\mathcal{N}(\vzero,\mI_d)$. Then we have
\begin{equation} \nonumber
\frac{\partial}{\partial \vtheta}\mathcal{L}^N_{\text{CT}}(\vtheta,\vtheta^-) = \frac{\partial}{\partial \vtheta}\mathcal{L}^N_{\text{CD}}(\vtheta,\vtheta^-) + O((\Delta t)^2)
\end{equation}
\end{lem}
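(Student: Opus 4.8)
The plan is to realize the Consistency Training objective \eqref{CTobject} as the \emph{single-sample} version of the Consistency Distillation objective \eqref{distillation_loss}, and then to show that replacing a conditional mean by a single sample perturbs the $\vtheta$-gradient only at second order in $\Delta t$. First I would carry out the algebraic identification. Writing $\vx_{t_{n+1}} = e^{-t_{n+1}}\vx_0 + \sqrt{1-e^{-2t_{n+1}}}\,\vz$ with $\vz\sim\mathcal{N}(\vzero,\mI_d)$ — which by \eqref{OUscheduler} has law $p_{t_{n+1}}$ — and replacing the exact score $\nabla\log p_{t_{n+1}}(\vx_{t_{n+1}})$ in the exponential integrator \eqref{exponential integrator} by the single draw $-\vz/\sqrt{1-e^{-2t_{n+1}}}$, one obtains $\hat\vx^{\text{CT}}_{t_n} := e^{h_n}\vx_{t_{n+1}} - (e^{h_n}-1)\vz/\sqrt{1-e^{-2t_{n+1}}}$ with $h_n := t_{n+1}-t_n$. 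Substituting the expression for $\vx_{t_{n+1}}$ and simplifying the coefficient of $\vz$ (using $h_n - 2t_{n+1} = -(t_n+t_{n+1})$) gives $\hat\vx^{\text{CT}}_{t_n} = e^{-t_n}\vx_0 + \frac{1-e^{-(t_n+t_{n+1})}}{\sqrt{1-e^{-2t_{n+1}}}}\vz$, which is exactly the argument of $\vf_{\vtheta^-}$ appearing in \eqref{CTobject}. Thus, under the coupling that feeds the same $(\vx_0,\vz)$ to both objectives, $\mathcal{L}^N_{\text{CT}}$ and $\mathcal{L}^N_{\text{CD}}$ are expectations of the same quadratic with, respectively, $\hat\vx^{\text{CT}}_{t_n}$ and $\hat\vx^\vphi_{t_n} := e^{h_n}\vx_{t_{n+1}} + (e^{h_n}-1)\nabla\log p_{t_{n+1}}(\vx_{t_{n+1}})$ in the last slot.

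The crucial point is Tweedie's identity for the OU kernel: from $\vx_{t_{n+1}} = e^{-t_{n+1}}\vx_0 + \sqrt{1-e^{-2t_{n+1}}}\,\vz$ one gets $\nabla\log p_{t_{n+1}}(\vx_{t_{n+1}}) = -\mathbb{E}[\vz\mid\vx_{t_{n+1}}]/\sqrt{1-e^{-2t_{n+1}}}$, hence $\mathbb{E}[\hat\vx^{\text{CT}}_{t_n}\mid\vx_{t_{n+1}}] = \hat\vx^\vphi_{t_n}$: the exact-score exponential-integrator target is exactly the conditional mean of the single-sample target. Now I differentiate. Since $\vtheta^-$ carries a stop-gradient, for either objective $\partial_\vtheta\mathcal{L}^N_\bullet = 2\,\mathbb{E}[(\partial_\vtheta\vf_\vtheta(\vx_{t_{n+1}},t_{n+1}))^\top(\vf_\vtheta(\vx_{t_{n+1}},t_{n+1}) - \vf_{\vtheta^-}(\hat\vx^\bullet_{t_n},t_n))]$, so subtracting,
\[
\partial_\vtheta\mathcal{L}^N_{\text{CT}} - \partial_\vtheta\mathcal{L}^N_{\text{CD}} = 2\,\mathbb{E}\big[(\partial_\vtheta\vf_\vtheta(\vx_{t_{n+1}},t_{n+1}))^\top\big(\vf_{\vtheta^-}(\hat\vx^\vphi_{t_n},t_n) - \vf_{\vtheta^-}(\hat\vx^{\text{CT}}_{t_n},t_n)\big)\big].
\]
Conditioning on $(n,\vx_{t_{n+1}})$ — under which $\partial_\vtheta\vf_\vtheta(\vx_{t_{n+1}},t_{n+1})$, $\hat\vx^\vphi_{t_n}$ and $\partial_\vx\vf_{\vtheta^-}(\hat\vx^\vphi_{t_n},t_n)$ are all deterministic — a second-order Taylor expansion of $\vf_{\vtheta^-}(\cdot,t_n)$ about $\hat\vx^\vphi_{t_n}$ yields $\vf_{\vtheta^-}(\hat\vx^{\text{CT}}_{t_n},t_n) - \vf_{\vtheta^-}(\hat\vx^\vphi_{t_n},t_n) = \partial_\vx\vf_{\vtheta^-}(\hat\vx^\vphi_{t_n},t_n)(\hat\vx^{\text{CT}}_{t_n} - \hat\vx^\vphi_{t_n}) + R$ with $\Vert R\Vert_2 \le \tfrac12\sup\Vert\partial_\vx^2\vf_{\vtheta^-}\Vert_{\mathrm{op}}\,\Vert\hat\vx^{\text{CT}}_{t_n} - \hat\vx^\vphi_{t_n}\Vert_2^2$. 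The linear term has zero conditional mean by the Tweedie identity, so only $2\,\mathbb{E}[(\partial_\vtheta\vf_\vtheta(\vx_{t_{n+1}},t_{n+1}))^\top R]$ survives.

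Finally I estimate the remainder: $\Vert\hat\vx^{\text{CT}}_{t_n} - \hat\vx^\vphi_{t_n}\Vert_2 = (e^{h_n}-1)\,\big\Vert\nabla\log p_{t_{n+1}}(\vx_{t_{n+1}}) + \vz/\sqrt{1-e^{-2t_{n+1}}}\big\Vert_2 \lesssim \Delta t\,\big\Vert\nabla\log p_{t_{n+1}}(\vx_{t_{n+1}}) + \vz/\sqrt{1-e^{-2t_{n+1}}}\big\Vert_2$, where the last factor has finite (indeed, uniformly-in-$n$ bounded) second moment because $\mathbb{E}[\Vert\nabla\log p_{t_n}(\vx_{t_n})\Vert_2^2]<\infty$ and $\mathbb{E}\Vert\vz\Vert_2^2 = d$. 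With $\partial_\vx^2\vf_{\vtheta^-}$ bounded and a Cauchy--Schwarz against $\partial_\vtheta\vf_\vtheta$ (whose integrability is implicit in the gradient being well-defined), this gives $\mathbb{E}\Vert R\Vert_2\lesssim(\Delta t)^2$ and hence $\partial_\vtheta\mathcal{L}^N_{\text{CT}} = \partial_\vtheta\mathcal{L}^N_{\text{CD}} + O((\Delta t)^2)$. The step I expect to be the heart of the argument is the vanishing of the first-order Taylor term, which rests entirely on the identity $\hat\vx^\vphi_{t_n} = \mathbb{E}[\hat\vx^{\text{CT}}_{t_n}\mid\vx_{t_{n+1}}]$ — a structural coincidence between the exponential integrator fed with the \emph{exact} score and the Gaussian form of the OU transition kernel; without exact score, or for a generic ODE solver or a non-Gaussian forward scheme, the difference of gradients would generically carry an $O(\Delta t)$ term rather than $O((\Delta t)^2)$. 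Everything else — pinning down $\hat\vx^{\text{CT}}_{t_n}$ explicitly and bounding a second-order remainder — is routine bookkeeping under the stated smoothness and moment hypotheses.
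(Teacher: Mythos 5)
Your proposal is correct and follows essentially the same route as the paper's proof: establish the Tweedie identity $\nabla\log p_{t_{n+1}}(\vx_{t_{n+1}}) = -\mathbb{E}\bigl[\vz/\sqrt{1-e^{-2t_{n+1}}}\,\big\vert\, \vx_{t_{n+1}}\bigr]$, observe that the difference of $\vtheta$-gradients reduces (via the stop-gradient on $\vtheta^-$) to an inner product of $\partial_\vtheta\vf_\vtheta$ with the difference of the two $\vf_{\vtheta^-}$ targets, and kill the first-order Taylor term by its zero conditional mean, leaving an $O((\Delta t)^2)$ remainder from the bounded second derivatives. Your explicit identification of the CT argument as the exponential integrator with the score replaced by the single-sample estimate $-\vz/\sqrt{1-e^{-2t_{n+1}}}$, and your slightly more careful moment/Cauchy--Schwarz bookkeeping for the remainder, are just cleaner renderings of the same argument.
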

\begin{proof}
    We first prove that, if $\vx_0 \sim \pdata, \vz \sim \mathcal{N}(\vzero,\mI_d), \vx_t = e^{-t}\vx_0 + \sqrt{1-e^{-2t}}\vz$, then we have $\nabla\log p_t(\vx) = - \mathbb{E}[\frac{\vz}{\sqrt{1-e^{-2t}}} \vert \vx_t]$. This comes from the fact that
    \begin{align*}
        \nabla \log p_t(\vx_t) &= \frac{\int_{\mathbb{R}^d} \pdata(\vx_0) \nabla_{\vx_t} p(\vx_t \vert \vx_0)\dd \vx_0}{\int_{\mathbb{R}^d}\pdata(\vx_0)p(\vx_t\vert \vx_0) \dd \vx_0} \\
        &= \frac{\int_{\mathbb{R}^d} \pdata(\vx_0) p(\vx_t\vert\vx_0) \nabla_{\vx_t}\log p(\vx_t \vert \vx_0)\dd \vx_0}{p_t(\vx_t)} \\
        &= \int_{\mathbb{R}^d} \frac{\pdata(\vx_0) p(\vx_t\vert\vx_0)}{p_t(\vx_t)} \nabla_{\vx_t} \log p(\vx_t\vert \vx_0) \dd \vx_0 \\
        &= \int_{\mathbb{R}^d} p(\vx_0\vert\vx_t) \nabla_{\vx_t} \log p(\vx_t\vert\vx_0)\dd\vx_0 \\
        &= \mathbb{E}[\nabla_{\vx_t} \log p(\vx_t \vert \vx_0) \vert \vx_t],
    \end{align*}
    and  $p(\vx_t \vert \vx_0)  \sim e^{- \frac{\Vert \vx_t - e^{-t}\vx_0\Vert^2}{2(1-e^{-2t})}}$, which means
    \begin{equation*}
        \nabla_{\vx_t} \log p(\vx_t \vert \vx_0) = -\frac{\vx_t - e^{-t}\vx_0}{1-e^{-2t}} = -\frac{\vz}{\sqrt{1-e^{-2t}}}.
    \end{equation*}
    Now we may rewrite $\frac{\partial}{\partial \vtheta}\mathcal{L}^N_{\text{CD}}(\vtheta,\vtheta^-)$ as
    \begin{align*}
        &\frac{\partial}{\partial \vtheta}\mathcal{L}^N_{\text{CD}}(\vtheta,\vtheta^-) = \frac{\partial}{\partial \vtheta}\mathbb{E}[\Vert \vf_\vtheta(\vx_{t_{n+1}}, t_{n+1})- \vf_{\vtheta^-} \left(e^{t_{n+1} - t_n}\vx_{t_{n+1}} + (e^{t_{n+1} - t_n} - 1) \nabla \log p_{t_{n+1}}(\vx_{t_{n+1}}), t_n\right) \Vert_2^2] \\
        =&  \frac{\partial}{\partial \vtheta}\mathbb{E}\left[\Vert \vf_\vtheta(\vx_{t_{n+1}}, t_{n+1})   \Vert_2^2 - 2\left\langle  \vf_\vtheta(\vx_{t_{n+1}}, t_{n+1}),  \vf_{\vtheta^-} \left(e^{t_{n+1} - t_n}\vx_{t_{n+1}} + (e^{t_{n+1} - t_n} - 1) \nabla \log p_{t_{n+1}}(\vx_{t_{n+1}}), t_n\right)  \right\rangle \right],
    \end{align*}
    and similarly, 
    \begin{align*}
        &\frac{\partial}{\partial \vtheta}\mathcal{L}^N_{\text{CT}}(\vtheta,\vtheta^-) = \frac{\partial}{\partial \vtheta}\mathbb{E}[\Vert \vf_\vtheta(\vx_{t_{n+1}}, t_{n+1})- \vf_{\vtheta^-} \left(e^{-t_{n}}\vx_0 + \frac{1-e^{-(t_{n} + t_{n+1})}}{\sqrt{1-e^{-2t_{n+1}}}}\vz, t_n\right) \Vert_2^2] \\
        =&  \frac{\partial}{\partial \vtheta}\mathbb{E}\left[\Vert \vf_\vtheta(\vx_{t_{n+1}}, t_{n+1})   \Vert_2^2 - 2\left\langle  \vf_\vtheta(\vx_{t_{n+1}}, t_{n+1}),  \vf_{\vtheta^-} \left(e^{-t_{n}}\vx_0 + \frac{1-e^{-(t_{n} + t_{n+1})}}{\sqrt{1-e^{-2t_{n+1}}}}\vz, t_n\right)  \right\rangle \right],
    \end{align*}
    thus
    \begin{align*}
        &\frac{\partial}{\partial \vtheta}\mathcal{L}^N_{\text{CD}}(\vtheta,\vtheta^-) - \frac{\partial}{\partial \vtheta}\mathcal{L}^N_{\text{CT}}(\vtheta,\vtheta^-)  \\
        = &2\mathbb{E}[\langle \frac{\partial}{\partial \vtheta} \vf_\vtheta(\vx_{t_{n+1}}, t_{n+1}),\\
        &\vf_{\vtheta^-} \left(e^{-t_{n}}\vx_0 + \frac{1-e^{-(t_{n} + t_{n+1})}}{\sqrt{1-e^{-2t_{n+1}}}}\vz,  t_n\right) - \vf_{\vtheta^-} (e^{t_{n+1} - t_n}\vx_{t_{n+1}} + (e^{t_{n+1} - t_n} - 1) \nabla \log p_{t_{n+1}}(\vx_{t_{n+1}}), t_n)   \rangle].
    \end{align*}
    Notice that
    \begin{align*}
        &\vf_{\vtheta^-} \left(e^{-t_{n}}\vx_0 + \frac{1-e^{-(t_{n} + t_{n+1})}}{\sqrt{1-e^{-2t_{n+1}}}}\vz,  t_n\right) - \vf_{\vtheta^-} (e^{t_{n+1} - t_n}\vx_{t_{n+1}} + (e^{t_{n+1} - t_n} - 1) \nabla \log p_{t_{n+1}}(\vx_{t_{n+1}}), t_n) \\
        =& \nabla_{\vx} \vf_{\vtheta^-} (e^{t_{n+1} - t_n}\vx_{t_{n+1}} + (e^{t_{n+1} - t_n} - 1) \nabla \log p_{t_{n+1}}(\vx_{t_{n+1}}), t_n)  \cdot \\
        &\left( \frac{1-e^{-(t_{n} + t_{n+1})}}{\sqrt{1-e^{-2t_{n+1}}}}\vz  - e^{t_{n+1} - t_n} \sqrt{1-e^{-2t_n}}\vz + (e^{t_{n+1} - t_n} - 1)\nabla \log p_{t_{n+1}}(\vx_{t_{n+1}})\right) + O((t_{n+1} - t_n)^2)\\
        =&\nabla_{\vx} \vf_{\vtheta^-} (e^{t_{n+1} - t_n}\vx_{t_{n+1}} + (e^{t_{n+1} - t_n} - 1) \nabla \log p_{t_{n+1}}(\vx_{t_{n+1}}), t_n)  \cdot \\
        &(e^{t_{n+1} - t_n} - 1) \left(\frac{\vz}{\sqrt{1-e^{-2t_{n+1}}}}  - \mathbb{E}\left[\left.\frac{\vz}{\sqrt{1-e^{-2t_{n+1}}}} \right\vert \vx_{t_{n+1}}\right]\right)+ O((t_{n+1} - t_n)^2),
    \end{align*}
    we have (writing the terms of $\vx_{t_{n+1}}, t_{n+1}$ with $C(\vx_{t_{n+1}}, t_{n+1})$ for simplicity)
    \begin{align*}
        &\frac{\partial}{\partial \vtheta}\mathcal{L}^N_{\text{CD}}(\vtheta,\vtheta^-) - \frac{\partial}{\partial \vtheta}\mathcal{L}^N_{\text{CT}}(\vtheta,\vtheta^-) \\
        =&\mathbb{E}[ C(\vx_{t_{n+1}}, t_{n+1}) (\vz - \mathbb{E}[\vz \vert \vx_{t_{n+1}}]) ] + O((t_{n+1} - t_n)^2) \\
        =& \mathbb{E}[ C(\vx_{t_{n+1}}, t_{n+1}) \vz] - \mathbb{E}[ \mathbb{E}[  C(\vx_{t_{n+1}}, t_{n+1}) \vz \vert \vx_{t_{n+1}}]]+ O((t_{n+1} - t_n)^2) \\
        =&O((t_{n+1} - t_n)^2)
    \end{align*}
    which finished the proof.
\end{proof}


\end{document}